\theoremstyle{plain}
\newtheorem{theorem}{Theorem}[section]
\newtheorem{corollary}[theorem]{Corollary}
\newtheorem{lemma}[theorem]{Lemma}
\newtheorem{proposition}[theorem]{Proposition}
\newtheorem{remark}[theorem]{Remark}
\theoremstyle{definition}
\newtheorem{definition}[theorem]{Definition}
\newtheorem{acknowledgement}[theorem]{Acknowledgement}
\newtheorem{axiom}[theorem]{Axiom}
\newtheorem{conjecture}[theorem]{Conjecture}
\newtheorem{example}[theorem]{Example}
\newtheorem{exercise}[theorem]{Exercise}
\numberwithin{equation}{section}
\DeclareMathOperator{\R}{\mathbb R}
\chardef\@x10\chardef\@xv60
\def\tcitime{
\def\@time{%
  \@minute\time\@hour\@minute\divide\@hour\@xv
  \ifnum\@hour<\@x 0\fi\the\@hour:%
  \multiply\@hour\@xv\advance\@minute-\@hour
  \ifnum\@minute<\@x 0\fi\the\@minute
  }}%
\def\x@hyperref#1#2#3{%
   \catcode`\~ = 12
   \catcode`\% = 12
   \catcode`\$ = 12
   \catcode`\_ = 12
   \catcode`\# = 12
   \catcode`\& = 12
   \y@hyperref{#1}{#2}{#3}%
}
\def\y@hyperref#1#2#3#4{%
   #2\ref{#4}#3
   \catcode`\~ = 13
   \catcode`\% = 14
   \catcode`\$ = 3
   \catcode`\_ = 8
   \catcode`\# = 6
   \catcode`\& = 4
}
\def\QCTOpt[#1]#2{%
  \def\QCTOptB{#1}
  \def\QCTOptA{#2}
}
\def\QCTNOpt#1{%
  \def\QCTOptA{#1}
  \let\QCTOptB\empty
}
\def\Qct{%
  \@ifnextchar[{%
    \QCTOpt}{\QCTNOpt}
}
\def\QCBOpt[#1]#2{%
  \def\QCBOptB{#1}%
  \def\QCBOptA{#2}%
}
\def\QCBNOpt#1{%
  \def\QCBOptA{#1}%
  \let\QCBOptB\empty
}
\def\Qcb{%
  \@ifnextchar[{%
    \QCBOpt}{\QCBNOpt}%
}
\def\PrepCapArgs{%
  \ifx\QCBOptA\empty
    \ifx\QCTOptA\empty
      {}%
    \else
      \ifx\QCTOptB\empty
        {\QCTOptA}%
      \else
        [\QCTOptB]{\QCTOptA}%
      \fi
    \fi
  \else
    \ifx\QCBOptA\empty
      {}%
    \else
      \ifx\QCBOptB\empty
        {\QCBOptA}%
      \else
        [\QCBOptB]{\QCBOptA}%
      \fi
    \fi
  \fi
}
\def\GRAPHICSPS#1{%
 \ifcase\GRAPHICSTYPE
   \special{ps: #1}%
 \or
   \special{language "PS", include "#1"}%
 \fi
}%
\def\graffile#1#2#3#4{%
    \bgroup
	   \@inlabelfalse
       \leavevmode
       \@ifundefined{bbl@deactivate}{\def~{\string~}}{\activesoff}%
        \raise -#4 \BOXTHEFRAME{%
           \hbox to #2{\raise #3\hbox to #2{\null #1\hfil}}}%
    \egroup
}%
\def\draftbox#1#2#3#4{%
 \leavevmode\raise -#4 \hbox{%
  \frame{\rlap{\protect\tiny #1}\hbox to #2%
   {\vrule height#3 width\z@ depth\z@\hfil}%
  }%
 }%
}%
\newif\ifwasdraft
\def\GRAPHIC#1#2#3#4#5{%
   \ifnum\draft=\@ne\draftbox{#2}{#3}{#4}{#5}%
   \else\graffile{#1}{#3}{#4}{#5}%
   \fi
}
\def\addtoLaTeXparams#1{%
    \edef\LaTeXparams{\LaTeXparams #1}}%
\newif\ifBoxFrame \BoxFramefalse
\newif\ifOverFrame \OverFramefalse
\newif\ifUnderFrame \UnderFramefalse
\def\BOXTHEFRAME#1{%
   \hbox{%
      \ifBoxFrame
         \frame{#1}%
      \else
         {#1}%
      \fi
   }%
}
\def\doFRAMEparams#1{\BoxFramefalse\OverFramefalse\UnderFramefalse\readFRAMEparams#1\end}%
\def\readFRAMEparams#1{%
 \ifx#1\end%
  \let\next=\relax
  \else
  \ifx#1i\dispkind=\z@\fi
  \ifx#1d\dispkind=\@ne\fi
  \ifx#1f\dispkind=\tw@\fi
  \ifx#1t\addtoLaTeXparams{t}\fi
  \ifx#1b\addtoLaTeXparams{b}\fi
  \ifx#1p\addtoLaTeXparams{p}\fi
  \ifx#1h\addtoLaTeXparams{h}\fi
  \ifx#1X\BoxFrametrue\fi
  \ifx#1O\OverFrametrue\fi
  \ifx#1U\UnderFrametrue\fi
  \ifx#1w
    \ifnum\draft=1\wasdrafttrue\else\wasdraftfalse\fi
    \draft=\@ne
  \fi
  \let\next=\readFRAMEparams
  \fi
 \next
 }%
\def\IFRAME#1#2#3#4#5#6{%
      \bgroup
      \let\QCTOptA\empty
      \let\QCTOptB\empty
      \let\QCBOptA\empty
      \let\QCBOptB\empty
      #6%
      \parindent=0pt
      \leftskip=0pt
      \rightskip=0pt
      \setbox0=\hbox{\QCBOptA}%
      \@tempdima=#1\relax
      \ifOverFrame
          \typeout{This is not implemented yet}%
          \show\HELP
      \else
         \ifdim\wd0>\@tempdima
            \advance\@tempdima by \@tempdima
            \ifdim\wd0 >\@tempdima
               \setbox1 =\vbox{%
                  \unskip\hbox to \@tempdima{\hfill\GRAPHIC{#5}{#4}{#1}{#2}{#3}\hfill}%
                  \unskip\hbox to \@tempdima{\parbox[b]{\@tempdima}{\QCBOptA}}%
               }%
               \wd1=\@tempdima
            \else
               \textwidth=\wd0
               \setbox1 =\vbox{%
                 \noindent\hbox to \wd0{\hfill\GRAPHIC{#5}{#4}{#1}{#2}{#3}\hfill}\\%
                 \noindent\hbox{\QCBOptA}%
               }%
               \wd1=\wd0
            \fi
         \else
            \ifdim\wd0>0pt
              \hsize=\@tempdima
              \setbox1=\vbox{%
                \unskip\GRAPHIC{#5}{#4}{#1}{#2}{0pt}%
                \break
                \unskip\hbox to \@tempdima{\hfill \QCBOptA\hfill}%
              }%
              \wd1=\@tempdima
           \else
              \hsize=\@tempdima
              \setbox1=\vbox{%
                \unskip\GRAPHIC{#5}{#4}{#1}{#2}{0pt}%
              }%
              \wd1=\@tempdima
           \fi
         \fi
         \@tempdimb=\ht1
         \advance\@tempdimb by -#2
         \advance\@tempdimb by #3
         \leavevmode
         \raise -\@tempdimb \hbox{\box1}%
      \fi
      \egroup%
}%
\def\DFRAME#1#2#3#4#5{%
 \begin{center}
     \let\QCTOptA\empty
     \let\QCTOptB\empty
     \let\QCBOptA\empty
     \let\QCBOptB\empty
	 \vbox\bgroup
        \ifOverFrame 
           #5\QCTOptA\par
        \fi
        \GRAPHIC{#4}{#3}{#1}{#2}{\z@}
        \ifUnderFrame 
           \par#5\QCBOptA
        \fi
	 \egroup
 \end{center}%
 }%
\def\FFRAME#1#2#3#4#5#6#7{%
  \@ifundefined{floatstyle}
    {
     \begin{figure}[#1]%
    }
    {
	 \ifx#1h
      \begin{figure}[H]%
	 \else
      \begin{figure}[#1]%
	 \fi
	}
  \let\QCTOptA\empty
  \let\QCTOptB\empty
  \let\QCBOptA\empty
  \let\QCBOptB\empty
  \ifOverFrame
    #4
    \ifx\QCTOptA\empty
    \else
      \ifx\QCTOptB\empty
        \caption{\QCTOptA}%
      \else
        \caption[\QCTOptB]{\QCTOptA}%
      \fi
    \fi
    \ifUnderFrame\else
      \label{#5}%
    \fi
  \else
    \UnderFrametrue%
  \fi
  \begin{center}\GRAPHIC{#7}{#6}{#2}{#3}{\z@}\end{center}%
  \ifUnderFrame
    #4
    \ifx\QCBOptA\empty
      \caption{}%
    \else
      \ifx\QCBOptB\empty
        \caption{\QCBOptA}%
      \else
        \caption[\QCBOptB]{\QCBOptA}%
      \fi
    \fi
    \label{#5}%
  \fi
  \end{figure}%
 }%
\def\makeactives{
  \catcode`\"=\active
  \catcode`\;=\active
  \catcode`\:=\active
  \catcode`\'=\active
  \catcode`\~=\active
}
   \gdef\activesoff{%
      \def"{\string"}
      \def;{\string;}
      \def:{\string:}
      \def'{\string'}
      \def~{\string~}
    }
\def\FRAME#1#2#3#4#5#6#7#8{%
 \bgroup
 \ifnum\draft=\@ne
   \wasdrafttrue
 \else
   \wasdraftfalse%
 \fi
 \def\LaTeXparams{}%
 \dispkind=\z@
 \def\LaTeXparams{}%
 \doFRAMEparams{#1}%
 \ifnum\dispkind=\z@\IFRAME{#2}{#3}{#4}{#7}{#8}{#5}\else
  \ifnum\dispkind=\@ne\DFRAME{#2}{#3}{#7}{#8}{#5}\else
   \ifnum\dispkind=\tw@
    \edef\@tempa{\noexpand\FFRAME{\LaTeXparams}}%
    \@tempa{#2}{#3}{#5}{#6}{#7}{#8}%
    \fi
   \fi
  \fi
  \ifwasdraft\draft=1\else\draft=0\fi{}%
  \egroup
 }%
\def\TEXUX#1{"texux"}
\def\func#1{\mathop{\rm #1}\nolimits}%
\long\def\QQQ#1#2{%
     \long\expandafter\def\csname#1\endcsname{#2}}%
\long\def\QQA#1#2{}%
\def\QTR#1#2{{\csname#1\endcsname #2}}
\def\EXPAND#1[#2]#3{}%
\def\NOEXPAND#1[#2]#3{}%
\def\LaTeXparent#1{}%
\def\ChildStyles#1{}%
\def\ChildDefaults#1{}%
\def\QTagDef#1#2#3{}%
  \providecommand{\UNICODE}[2][]{\protect\rule{.1in}{.1in}}
  \providecommand{\U}[1]{\protect\rule{.1in}{.1in}}
\def\QQfnmark#1{\footnotemark}
 \def\abstract{%
  \if@twocolumn
   \section*{Abstract (Not appropriate in this style!)}%
   \else \small 
   \begin{center}{\bf Abstract\vspace{-.5em}\vspace{\z@}}\end{center}%
   \quotation 
   \fi
  }%
   \def\registered{\relax\ifmmode{}\r@gistered
                    \else$\m@th\r@gistered$\fi}%
 \def\r@gistered{^{\ooalign
  {\hfil\raise.07ex\hbox{$\scriptstyle\rm\text{R}$}\hfil\crcr
  \mathhexbox20D}}}}{}%
\newdimen\theight
\def\newfmtname{LaTeX2e}
  \DeclareOldFontCommand{\rm}{\normalfont\rmfamily}{\mathrm}
  \DeclareOldFontCommand{\sf}{\normalfont\sffamily}{\mathsf}
  \DeclareOldFontCommand{\tt}{\normalfont\ttfamily}{\mathtt}
  \DeclareOldFontCommand{\bf}{\normalfont\bfseries}{\mathbf}
  \DeclareOldFontCommand{\it}{\normalfont\itshape}{\mathit}
  \DeclareOldFontCommand{\sl}{\normalfont\slshape}{\@nomath\sl}
  \DeclareOldFontCommand{\sc}{\normalfont\scshape}{\@nomath\sc}
\def\alpha{{\Greekmath 010B}}%
\def\beta{{\Greekmath 010C}}%
\def\gamma{{\Greekmath 010D}}%
\def\delta{{\Greekmath 010E}}%
\def\epsilon{{\Greekmath 010F}}%
\def\zeta{{\Greekmath 0110}}%
\def\eta{{\Greekmath 0111}}%
\def\theta{{\Greekmath 0112}}%
\def\iota{{\Greekmath 0113}}%
\def\kappa{{\Greekmath 0114}}%
\def\lambda{{\Greekmath 0115}}%
\def\mu{{\Greekmath 0116}}%
\def\nu{{\Greekmath 0117}}%
\def\xi{{\Greekmath 0118}}%
\def\pi{{\Greekmath 0119}}%
\def\rho{{\Greekmath 011A}}%
\def\sigma{{\Greekmath 011B}}%
\def\tau{{\Greekmath 011C}}%
\def\upsilon{{\Greekmath 011D}}%
\def\phi{{\Greekmath 011E}}%
\def\chi{{\Greekmath 011F}}%
\def\psi{{\Greekmath 0120}}%
\def\omega{{\Greekmath 0121}}%
\def\varepsilon{{\Greekmath 0122}}%
\def\vartheta{{\Greekmath 0123}}%
\def\varpi{{\Greekmath 0124}}%
\def\varrho{{\Greekmath 0125}}%
\def\varsigma{{\Greekmath 0126}}%
\def\varphi{{\Greekmath 0127}}%
\def\nabla{{\Greekmath 0272}}
\def\FindBoldGroup{%
   {\setbox0=\hbox{$\mathbf{x\global\edef\theboldgroup{\the\mathgroup}}$}}%
}
\def\Greekmath#1#2#3#4{%
    \if@compatibility
        \ifnum\mathgroup=\symbold
           \mathchoice{\mbox{\boldmath$\displaystyle\mathchar"#1#2#3#4$}}%
                      {\mbox{\boldmath$\textstyle\mathchar"#1#2#3#4$}}%
                      {\mbox{\boldmath$\scriptstyle\mathchar"#1#2#3#4$}}%
                      {\mbox{\boldmath$\scriptscriptstyle\mathchar"#1#2#3#4$}}%
        \else
           \mathchar"#1#2#3#4%
        \fi 
    \else 
        \FindBoldGroup
        \ifnum\mathgroup=\theboldgroup 
           \mathchoice{\mbox{\boldmath$\displaystyle\mathchar"#1#2#3#4$}}%
                      {\mbox{\boldmath$\textstyle\mathchar"#1#2#3#4$}}%
                      {\mbox{\boldmath$\scriptstyle\mathchar"#1#2#3#4$}}%
                      {\mbox{\boldmath$\scriptscriptstyle\mathchar"#1#2#3#4$}}%
        \else
           \mathchar"#1#2#3#4%
        \fi     	    
	  \fi}
\newif\ifGreekBold  \GreekBoldfalse
\let\SAVEPBF=\pbf
\def\pbf{\GreekBoldtrue\SAVEPBF}%
  \newcounter{equationnumber}  
  \def\mathletters{%
     \addtocounter{equation}{1}
     \edef\@currentlabel{\theequation}%
     \setcounter{equationnumber}{\c@equation}
     \setcounter{equation}{0}%
     \edef\theequation{\@currentlabel\noexpand\alph{equation}}%
  }
    \def\BibTeX{{\rm B\kern-.05em{\sc i\kern-.025em b}\kern-.08em
                 T\kern-.1667em\lower.7ex\hbox{E}\kern-.125emX}}}{}%
\def\AmS{{\protect\usefont{OMS}{cmsy}{m}{n}%
                A\kern-.1667em\lower.5ex\hbox{M}\kern-.125emS}}}{}%
\def\@@eqncr{\let\@tempa\relax
    \ifcase\@eqcnt \def\@tempa{& & &}\or \def\@tempa{& &}%
      \else \def\@tempa{&}\fi
     \@tempa
     \if@eqnsw
        \iftag@
           \@taggnum
        \else
           \@eqnnum\stepcounter{equation}%
        \fi
     \fi
     \global\tag@false
     \global\@eqnswtrue
     \global\@eqcnt\z@\cr}
\def\TCItag{\@ifnextchar*{\@TCItagstar}{\@TCItag}}
\def\@TCItag#1{%
    \global\tag@true
    \global\def\@taggnum{(#1)}}
\def\@TCItagstar*#1{%
    \global\tag@true
    \global\def\@taggnum{#1}}
\def\ExitTCILatex{\makeatother }
\let\DOTSI\relax
\def\RIfM@{\relax\ifmmode}%
\def\FN@{\futurelet\next}%
\def\iint{\DOTSI\intno@\tw@\FN@\ints@}%
\def\iiint{\DOTSI\intno@\thr@@\FN@\ints@}%
\def\iiiint{\DOTSI\intno@4 \FN@\ints@}%
\def\idotsint{\DOTSI\intno@\z@\FN@\ints@}%
\def\ints@{\findlimits@\ints@@}%
\newif\iflimtoken@
\newif\iflimits@
\def\findlimits@{\limtoken@true\ifx\next\limits\limits@true
 \else\ifx\next\nolimits\limits@false\else
 \limtoken@false\ifx\ilimits@\nolimits\limits@false\else
 \ifinner\limits@false\else\limits@true\fi\fi\fi\fi}%
\def\multint@{\int\ifnum\intno@=\z@\intdots@                          
 \else\intkern@\fi                                                    
 \ifnum\intno@>\tw@\int\intkern@\fi                                   
 \ifnum\intno@>\thr@@\int\intkern@\fi                                 
 \int}
\def\multintlimits@{\intop\ifnum\intno@=\z@\intdots@\else\intkern@\fi
 \ifnum\intno@>\tw@\intop\intkern@\fi
 \ifnum\intno@>\thr@@\intop\intkern@\fi\intop}%
\def\intic@{%
    \mathchoice{\hskip.5em}{\hskip.4em}{\hskip.4em}{\hskip.4em}}%
\def\negintic@{\mathchoice
 {\hskip-.5em}{\hskip-.4em}{\hskip-.4em}{\hskip-.4em}}%
\def\ints@@{\iflimtoken@                                              
 \def\ints@@@{\iflimits@\negintic@
   \mathop{\intic@\multintlimits@}\limits                             
  \else\multint@\nolimits\fi                                          
  \eat@}
 \else                                                                
 \def\ints@@@{\iflimits@\negintic@
  \mathop{\intic@\multintlimits@}\limits\else
  \multint@\nolimits\fi}\fi\ints@@@}%
\def\intkern@{\mathchoice{\!\!\!}{\!\!}{\!\!}{\!\!}}%
\def\plaincdots@{\mathinner{\cdotp\cdotp\cdotp}}%
\def\intdots@{\mathchoice{\plaincdots@}%
 {{\cdotp}\mkern1.5mu{\cdotp}\mkern1.5mu{\cdotp}}%
 {{\cdotp}\mkern1mu{\cdotp}\mkern1mu{\cdotp}}%
 {{\cdotp}\mkern1mu{\cdotp}\mkern1mu{\cdotp}}}%
\def\RIfM@{\relax\protect\ifmmode}
\def\text{\RIfM@\expandafter\text@\else\expandafter\mbox\fi}
\let\nfss@text\text
\def\text@#1{\mathchoice
   {\textdef@\displaystyle\f@size{#1}}%
   {\textdef@\textstyle\tf@size{\firstchoice@false #1}}%
   {\textdef@\textstyle\sf@size{\firstchoice@false #1}}%
   {\textdef@\textstyle \ssf@size{\firstchoice@false #1}}%
   \glb@settings}
\def\textdef@#1#2#3{\hbox{{%
                    \everymath{#1}%
                    \let\f@size#2\selectfont
                    #3}}}
\newif\iffirstchoice@
\def\Let@{\relax\iffalse{\fi\let\\=\cr\iffalse}\fi}%
\def\vspace@{\def\vspace##1{\crcr\noalign{\vskip##1\relax}}}%
\def\multilimits@{\bgroup\vspace@\Let@
 \baselineskip\fontdimen10 \scriptfont\tw@
 \advance\baselineskip\fontdimen12 \scriptfont\tw@
 \lineskip\thr@@\fontdimen8 \scriptfont\thr@@
 \lineskiplimit\lineskip
 \vbox\bgroup\ialign\bgroup\hfil$\m@th\scriptstyle{##}$\hfil\crcr}%
\def\Sb{_\multilimits@}%
\def\endSb{\crcr\egroup\egroup\egroup}%
\def\Sp{^\multilimits@}%
\newdimen\ex@
\def\rightarrowfill@#1{$#1\m@th\mathord-\mkern-6mu\cleaders
 \hbox{$#1\mkern-2mu\mathord-\mkern-2mu$}\hfill
 \mkern-6mu\mathord\rightarrow$}%
\def\leftarrowfill@#1{$#1\m@th\mathord\leftarrow\mkern-6mu\cleaders
 \hbox{$#1\mkern-2mu\mathord-\mkern-2mu$}\hfill\mkern-6mu\mathord-$}%
\def\leftrightarrowfill@#1{$#1\m@th\mathord\leftarrow
\mkern-6mu\cleaders
 \hbox{$#1\mkern-2mu\mathord-\mkern-2mu$}\hfill
 \mkern-6mu\mathord\rightarrow$}%
\def\overrightarrow{\mathpalette\overrightarrow@}%
\def\overrightarrow@#1#2{\vbox{\ialign{##\crcr\rightarrowfill@#1\crcr
 \noalign{\kern-\ex@\nointerlineskip}$\m@th\hfil#1#2\hfil$\crcr}}}%
\def\overleftarrow{\mathpalette\overleftarrow@}%
\def\overleftarrow@#1#2{\vbox{\ialign{##\crcr\leftarrowfill@#1\crcr
 \noalign{\kern-\ex@\nointerlineskip}$\m@th\hfil#1#2\hfil$\crcr}}}%
\def\overleftrightarrow{\mathpalette\overleftrightarrow@}%
\def\overleftrightarrow@#1#2{\vbox{\ialign{##\crcr
   \leftrightarrowfill@#1\crcr
 \noalign{\kern-\ex@\nointerlineskip}$\m@th\hfil#1#2\hfil$\crcr}}}%
\def\underrightarrow{\mathpalette\underrightarrow@}%
\def\underrightarrow@#1#2{\vtop{\ialign{##\crcr$\m@th\hfil#1#2\hfil
  $\crcr\noalign{\nointerlineskip}\rightarrowfill@#1\crcr}}}%
\def\underleftarrow{\mathpalette\underleftarrow@}%
\def\underleftarrow@#1#2{\vtop{\ialign{##\crcr$\m@th\hfil#1#2\hfil
  $\crcr\noalign{\nointerlineskip}\leftarrowfill@#1\crcr}}}%
\def\underleftrightarrow{\mathpalette\underleftrightarrow@}%
\def\underleftrightarrow@#1#2{\vtop{\ialign{##\crcr$\m@th
  \hfil#1#2\hfil$\crcr
 \noalign{\nointerlineskip}\leftrightarrowfill@#1\crcr}}}%
\def\qopnamewl@#1{\mathop{\operator@font#1}\nlimits@}
\let\nlimits@\displaylimits
\def\setboxz@h{\setbox\z@\hbox}
\def\varlim@#1#2{\mathop{\vtop{\ialign{##\crcr
 \hfil$#1\m@th\operator@font lim$\hfil\crcr
 \noalign{\nointerlineskip}#2#1\crcr
 \noalign{\nointerlineskip\kern-\ex@}\crcr}}}}
 \def\rightarrowfill@#1{\m@th\setboxz@h{$#1-$}\ht\z@\z@
  $#1\copy\z@\mkern-6mu\cleaders
  \hbox{$#1\mkern-2mu\box\z@\mkern-2mu$}\hfill
  \mkern-6mu\mathord\rightarrow$}
\def\leftarrowfill@#1{\m@th\setboxz@h{$#1-$}\ht\z@\z@
  $#1\mathord\leftarrow\mkern-6mu\cleaders
  \hbox{$#1\mkern-2mu\copy\z@\mkern-2mu$}\hfill
  \mkern-6mu\box\z@$}
\def\projlim{\qopnamewl@{proj\,lim}}
\def\injlim{\qopnamewl@{inj\,lim}}
\def\varinjlim{\mathpalette\varlim@\rightarrowfill@}
\def\varprojlim{\mathpalette\varlim@\leftarrowfill@}
\def\varliminf{\mathpalette\varliminf@{}}
\def\varliminf@#1{\mathop{\underline{\vrule\@depth.2\ex@\@width\z@
   \hbox{$#1\m@th\operator@font lim$}}}}
\def\varlimsup{\mathpalette\varlimsup@{}}
\def\varlimsup@#1{\mathop{\overline
  {\hbox{$#1\m@th\operator@font lim$}}}}
\def\align{\@verbatim \frenchspacing\@vobeyspaces \@alignverbatim
You are using the "align" environment in a style in which it is not defined.}
\let\csname endalign*\endcsname =\endtrivlist
\def\alignat{\@verbatim \frenchspacing\@vobeyspaces \@alignatverbatim
You are using the "alignat" environment in a style in which it is not defined.}
\let\csname endalignat*\endcsname =\endtrivlist
\def\xalignat{\@verbatim \frenchspacing\@vobeyspaces \@xalignatverbatim
You are using the "xalignat" environment in a style in which it is not defined.}
\let\csname endxalignat*\endcsname =\endtrivlist
\def\gather{\@verbatim \frenchspacing\@vobeyspaces \@gatherverbatim
You are using the "gather" environment in a style in which it is not defined.}
\let\csname endgather*\endcsname =\endtrivlist
\def\multiline{\@verbatim \frenchspacing\@vobeyspaces \@multilineverbatim
You are using the "multiline" environment in a style in which it is not defined.}
\let\csname endmultiline*\endcsname =\endtrivlist
\def\arrax{\@verbatim \frenchspacing\@vobeyspaces \@arraxverbatim
You are using a type of "array" construct that is only allowed in AmS-LaTeX.}
\def\tabulax{\@verbatim \frenchspacing\@vobeyspaces \@tabulaxverbatim
You are using a type of "tabular" construct that is only allowed in AmS-LaTeX.}
\let\csname endarrax*\endcsname =\endtrivlist
\let\csname endtabulax*\endcsname =\endtrivlist
 \def\endequation{%
     \ifmmode\ifinner 
      \iftag@
        \addtocounter{equation}{-1} 
        $\hfil
           \displaywidth\linewidth\@taggnum\egroup \endtrivlist
        \global\tag@false
        \global\@ignoretrue   
      \else
        $\hfil
           \displaywidth\linewidth\@eqnnum\egroup \endtrivlist
        \global\tag@false
        \global\@ignoretrue 
      \fi
     \else   
      \iftag@
        \addtocounter{equation}{-1} 
        \eqno \hbox{\@taggnum}
        \global\tag@false%
        $$\global\@ignoretrue
      \else
        \eqno \hbox{\@eqnnum}
        $$\global\@ignoretrue
      \fi
     \fi\fi
 } 
 \newif\iftag@ \tag@false
 \def\TCItag{\@ifnextchar*{\@TCItagstar}{\@TCItag}}
 \def\@TCItag#1{%
     \global\tag@true
     \global\def\@taggnum{(#1)}}
 \def\@TCItagstar*#1{%
     \global\tag@true
     \global\def\@taggnum{#1}}
     \def\tag{\@ifnextchar*{\@tagstar}{\@tag}}
     \def\@tag#1{%
         \global\tag@true
         \global\def\@taggnum{(#1)}}
     \def\@tagstar*#1{%
         \global\tag@true
         \global\def\@taggnum{#1}}
\begin{document}
\title[On minimizing surfaces of the CR invariant energy $E_1$ ]{ On
minimizing surfaces of the CR invariant energy $E_1$}
\author{Jih-Hsin Cheng}
\address{Institute of Mathematics, Academia Sinica and National Center for
Theoretical Sciences, Taipei, Taiwan, R.O.C.}
\email{cheng@math.sinica.edu.tw}
\author{Hung-Lin Chiu}
\address{Department of Mathematics, National Tsing-Hua University and
National Center for Theoretical Sciences, Hsinchu, Taiwan, R.O.C.}
\email{hlchiu@math.nthu.edu.tw}
\author{Paul Yang}
\address{Department of Mathematics, Princeton University, Princeton, NJ
08544, U.S.A.}
\email{yang@Math.Princeton.EDU}
\author{Yongbing Zhang}
\address{School of Mathematical Sciences and Wu Wen-Tsun Key Laboratory of
Mathematics, University of Science and Technology of China, Hefei, Anhui,
230026, P. R. of China.}
\email{ybzhang@amss.ac.cn}
\subjclass{32V05, 53C45, 53C17}
\keywords{Pseudohermitian structure, Heisenberg group, CR-invariant surface, 
$E_{1}$-energy, hyperbolic equation, initial value problem, rotational
symmetry.}
\thanks{}

\begin{abstract}
We study a CR-invariant equation for vanishing $E_{1}$ surfaces in the
3-dimensional Heisenberg group. This is shown to be a hyperbolic equation.
We prove the local uniqueness theorem for an initial value problem and
classify all such global surfaces with rotational symmetry. We also show
that the Clifford torus in the CR 3-sphere is not a local minimizer of $E_1$
by computing the second variation.
\end{abstract}

\maketitle



\section{Introduction and statement of the results}

Let $(M,\xi )$ be a contact $3$-manifold $M$ with contact structure $\xi $.
A CR structure is an endomorphism $J:\xi \rightarrow \xi $ such that $%
J^{2}=-I$. Consider a (smooth: meaning \textquotedblleft $C^{\infty }$%
-smooth" unless specified otherwise) surface $\Sigma $ in the CR manifold ($%
M,\xi ,J)$. A CR invariant surface element is a $2$-form $dA_{\Sigma }$
defined on $\Sigma $ such that 
\begin{equation*}
dA_{\Sigma _{1}}=\varphi ^{\ast }(dA_{\Sigma _{2}})
\end{equation*}%
for any (local) CR diffeomorphism $\varphi $ from a neighborhood $U_{1}$ of $%
\Sigma _{1}$ to a neighborhood $U_{2}$ of $\Sigma _{2}$ such that $\varphi
(\Sigma _{1})=\Sigma _{2}$. In the paper \cite{C} of early 90's, one of the
authors discovered two CR invariant surface elements $dA_{1}$ and $dA_{2}$,
via Cartan-Chern's method of admissible frames. In a subsequent paper \cite%
{CYZ18}, three of the authors show that for a nonsingular ($T\Sigma \neq \xi 
$) surface $\Sigma \subset M$, they can express $dA_{1}$ and $dA_{2}$ by
geometric quantities $e_{1}$ (characteristic unit tangent), $\alpha $ (the
deviation function)$,$ $H$ (the $p$-mean curvature) obtained from the
ambient pseudohermitian structure $(J,\Theta )$ (see a short review for
related quantities in Section \ref{Sec2}). In particular, we have (see \cite[%
Theorem 2]{CYZ18} or Proposition \ref{A-P-2} in Section \ref{Sec2}) 
\begin{equation}
dA_{1}=|e_{1}\alpha +\frac{1}{2}\alpha ^{2}+\frac{1}{6}H^{2}+\frac{1}{4}W-%
\func{Im}A_{11}|^{3/2}\Theta \wedge e^{1}.  \label{1-0}
\end{equation}%
\noindent Therefore we can define the CR invariant energy functionals:
(assuming the set of singular points has measure 0 in $\Sigma $) 
\begin{equation*}
E_{1}(\Sigma )=\int_{\Sigma }dA_{1}.
\end{equation*}%
First note that $E_{1}(\Sigma )=0$ for a (smooth) surface $\Sigma \subset M$%
=the Heisenberg group $H_{1}$ if and only if the following equation holds
true (on the nonsingular domain where $e_{1},$ $\alpha ,$ $H$ are defined;
for $H_{1}$, the Webster (scalar) curvature $W=0$ and the torsion $A_{11}=0$%
): 
\begin{equation}
e_{1}\alpha +\frac{1}{2}\alpha ^{2}+\frac{1}{6}H^{2}=0.  \label{enzeeq1}
\end{equation}

\noindent We often call $\Sigma $ or its graph (smooth) function $u$ an
absolute $E_{1}$ energy minimizer or absolute $E_{1}$-minimizer (with
vanishing energy).

On the region where $\alpha \neq 0$, a surface can be defined by the graph
of a (smooth) function $t=u(x,y)$ on a (nonsingular: $D\neq 0$) domain of
the $xy$-plane. In terms of $u(x,y)$, equation (\ref{enzeeq1}) reads as 
\begin{equation}
\begin{split}
0& =F(x,y,u,u_{x},u_{y},u_{xx},u_{xy},u_{yy}) \\
& =\left( \frac{u_{y}+x}{D}\right) D_{x}-\left( \frac{u_{x}-y}{D}\right)
D_{y}-\frac{1}{6}\left( \Delta u-\left( \frac{u_{x}-y}{D}\right)
D_{x}-\left( \frac{u_{y}+x}{D}\right) D_{y}\right) ^{2}-\frac{1}{2}
\end{split}
\label{enzeeq7}
\end{equation}%
\noindent where $D$ $:=$ $[(u_{x}-y)^{2}+(u_{y}+x)^{2}]^{1/2}$ (see Section %
\ref{Sec2} for the deduction). By Proposition \ref{A-P-1} in Section \ref%
{Sec2}, we have%
\begin{equation*}
F_{u_{xx}}F_{u_{yy}}-\frac{1}{4}F_{u_{xy}}^{2}=-\frac{1}{4}<0,
\end{equation*}%
\noindent meaning that (\ref{enzeeq7}) is a second-order hyperbolic equation
in the sense that the linearized operator is hyperbolic (see, for instance, 
\cite[(89) on p.398]{Eva98}). 
Let $(r,\phi )$ denote the polar coordinates of the $xy$-plane. Write 
\begin{equation}
\cos \theta =\frac{u_{x}-y}{D},\quad \sin \theta =\frac{u_{y}+x}{D}.
\label{1-1}
\end{equation}%
\noindent For a curve to be characteristic, see (\ref{chaeq}). We have the
uniqueness of absolute $E_{1}$-minimizers for the following initial-value
problem.

\begin{theorem}
\label{mainthm2} Suppose that on a nonsingular domain $u=u(r,\phi )$ and $%
v=v(r,\phi )$ are two (smooth) absolute $E_{1}$-minimizers having the same
initial values $H,\alpha $ and $\theta $ on a non-characteristic (smooth)
curve $r=\mathbf{c}>0$ with respect to $u\ (or\ U, see\ (\ref{U}))$, lying
on the region where $\det {A(r,\phi ,U)}\neq 0$ (see the notation $\det {%
A(r,\phi ,U)}$ in (\ref{Aex})). Then $u=v$ on a neighborhood of the initial
curve $r=\mathbf{c} $.
\end{theorem}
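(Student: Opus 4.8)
The plan is to recast the scalar second-order equation (\ref{enzeeq7}) as a first-order quasilinear system for the vector unknown $U$ of (\ref{U}), whose components package $u$ together with the geometric data $\alpha$, $H$, $\theta$ (and whatever first derivatives are needed to close the system). Writing this system in the form $A(r,\phi,U)\,\partial_r U + B(r,\phi,U)\,\partial_\phi U = f(r,\phi,U)$, the hypothesis $\det A(r,\phi,U)\neq 0$ lets us solve for $\partial_r U$ and put the system into normal form; the hyperbolicity supplied by Proposition \ref{A-P-1} (namely $F_{u_{xx}}F_{u_{yy}}-\tfrac14 F_{u_{xy}}^2=-\tfrac14<0$) guarantees that at each point there are two real and distinct characteristic directions, so the system admits a characteristic (Riemann-invariant) normal form. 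I would carry out this reduction first and record the associated characteristic ODEs, which are exactly the compatibility relations that any smooth minimizer must satisfy along its characteristics (\ref{chaeq}). Since we only assume $C^{\infty}$ rather than analytic data, Holmgren's theorem does not directly apply, which is why I pursue the characteristic route rather than a black-box uniqueness citation.

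Next I would exploit the two hypotheses on the initial curve. Since $r=\mathbf c$ is non-characteristic with respect to $u$, it is transversal to both families of characteristics issued from $u$, so in a neighborhood of $r=\mathbf c$ the two characteristic families define local coordinates $(\sigma,\tau)$ in which the characteristics are coordinate lines; the invertibility of $A$ guarantees that the transversal derivative of $U$ is determined by the tangential (Cauchy) data, so prescribing $H,\alpha,\theta$ on $r=\mathbf c$ fixes the full jet of $U$ needed to launch the characteristic integration. By continuity, both $\det A\neq 0$ and the non-characteristic condition persist on a neighborhood of the curve, so the characteristic net is well defined there.

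With the characteristic coordinates in hand, I would write each of the two solutions as the solution of the corresponding system of integral equations obtained by integrating the characteristic ODEs from the initial curve. Setting $W:=U_u-U_v$ (the difference of the first-order unknowns attached to $u$ and $v$) and subtracting the two integral systems, the smoothness of the coefficients yields local Lipschitz bounds and hence an estimate of the form $|W(P)|\le C\int |W|$ over the characteristic triangle (domain of dependence) based at $P$ and capped by $r=\mathbf c$. Because $W$ vanishes on $r=\mathbf c$ by the common-data hypothesis, a Gronwall / contraction argument forces $W\equiv 0$ on a neighborhood of the initial curve; recovering $u$ from $U$ by one further integration then gives $u=v$ there. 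Equivalently, one may phrase the same estimate as the uniqueness half of a Picard iteration for the characteristic system.

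I expect the main obstacle to be the quasilinear nature of the problem: the characteristic net depends on the unknown, so $u$ and $v$ generate a priori distinct characteristics and there is no single coordinate system adapted to both. The delicate point is therefore to run the difference estimate with a fixed parametrization while carrying the difference of the two characteristic fields inside the Gronwall bound (or, alternatively, to show that the two nets remain $O(|W|)$-close), so that all error terms are controlled by $\int|W|$ and the argument closes. A secondary technical point is to verify that $(H,\alpha,\theta)$ really constitute a complete set of Cauchy data for the reduced system and that $\det A(r,\phi,U)\neq 0$ is precisely what is needed to invert the leading coefficient throughout the relevant neighborhood.
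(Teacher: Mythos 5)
Your overall framework is the same as the paper's (Section \ref{Sec3} proceeds exactly by reducing to a first-order quasilinear system for $U$ and proving uniqueness for that system), but your proposal has a genuine gap at the hyperbolicity step. You claim that Proposition \ref{A-P-1} ``guarantees that at each point there are two real and distinct characteristic directions, so the system admits a characteristic (Riemann-invariant) normal form.'' Proposition \ref{A-P-1} concerns only the scalar second-order equation (\ref{enzeeq7}); the system you actually have to diagonalize is the $4\times 4$ system (\ref{enzeeq3}) with $a=A^{-1}B$ as in (\ref{eoAinB}), and its eigenvalues are \emph{never} all distinct: $\lambda_{2}=\lambda_{4}=-c/(sr)$ identically, $\lambda_{1}=\lambda_{3}$ wherever $H=0$, and all four coincide wherever $\alpha=0$ (Proposition \ref{P-3}). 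So the system is hyperbolic but not strictly hyperbolic, and the property your characteristic integral equations and Gronwall estimate require --- a complete set of real eigenvectors of $a(r,\phi)$ depending regularly on $(r,\phi)$, in the sense of \cite[p.~48]{John82} --- does not follow from the scalar computation. It must be verified directly, as the paper does by inverting $A$ and exhibiting the eigenvectors $\xi^{1},\dots,\xi^{4}$ explicitly, checking in particular that the eigenspaces remain full-dimensional across the degeneracy loci $\alpha H=0$ (Proposition \ref{P-4}, where the eigenvectors are polynomial in $\alpha,H$ and hence regular through the collisions). Without this verification your argument has no diagonal form to integrate along characteristics.

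A second understatement: you defer as ``secondary'' the closure of the system and the completeness of the Cauchy data, but this is a central part of the proof. The $E_{1}=0$ equation together with the integrability identity (\ref{iden2}) is \emph{not} quasilinear in $(\theta,\alpha)$ alone, since $H$ enters the coefficients; one must enlarge the unknown to $U=(\theta,\alpha,H,-N^{\perp}\alpha)^{t}$ (note $U$ does not contain $u$ itself) and adjoin the Codazzi-like equation (\ref{codeq1}) and the differentiated minimizer equation (\ref{emini2}) --- Propositions \ref{P-1} and \ref{P-2} --- to obtain the quasilinear system (\ref{enzeeq2}). Your worry about the fourth data component is resolved by the constraint (\ref{enzeeq15}): $2N^{\perp}\alpha=\alpha^{2}+\frac{1}{3}H^{2}$ determines $-N^{\perp}\alpha$ pointwise from $\alpha$ and $H$ on the initial curve. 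On the quasilinear difficulty you correctly identified (two solutions carry two distinct characteristic nets), the paper's route is simpler than the one you sketch: apply the mean value theorem to $a(V)-a(U)$ and $b(V)-b(U)$, so that $Z=V-U$ satisfies a genuinely \emph{linear} system (\ref{enzeeq6}) whose coefficients are frozen along the known solution $U$, and then quote the uniqueness theorem for linear hyperbolic systems \cite[Section 5 in Chapter 2]{John82}; only the single characteristic net of $U$ is ever needed, and no estimate on the drift between the two nets is required. Finally, your last step (``recover $u$ by one further integration'') is workable, since $\theta$ and $\alpha$ determine $\nabla u$ via $u_{x}=y+D\cos\theta$, $u_{y}=-x+D\sin\theta$ with $D=1/|\alpha|$, and is a legitimate elementary alternative to the paper's appeal to the fundamental theorem for surfaces in $H_{1}$ \cite{CL}; in both treatments the remaining additive constant must be pinned down on the initial curve.
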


Note that $\theta ,$ $\alpha $ involve first derivatives of $u$ while $H$
involves $u$-derivatives of second order. The $E_{1}=0$ equation (\ref%
{enzeeq1}) and the integrability condition (\ref{iden2}) form the first two
equations of (\ref{enzeeq2}) via (\ref{emini1}). Notice that $H$ is involved
as a coefficient in this two-equations system, so it is not a quasi-linear
system. Therefore we need to enlarge the set of variables including $H$ ($%
=-N^{\perp }\theta ),$ but then $N^{\perp }\alpha $ should also be included
to make the system \textquotedblleft perfect" (so that the Coddazi-like
equation (\ref{codeq1}) also plays its role). We finally need to consider a
system of four equations, which is quasi-linear; see (\ref{enzeeq2}). This
special structure for a second order nonlinear hyperbolic equation arising
from CR geometry is intriguing and worth further study in the future.

Next we classify all complete (say, with respect to the metric induced from
Euclidean $\mathbb{R}^{3})$, rotationally symmetric $E_{1}$-minimizer with
vanishing energy, i.e. satisfying (\ref{enzeeq1}).

\begin{theorem}
\label{T-class} With the notation above, we conclude that the complete
(smooth), rotationally symmetric absolute $E_{1}$-minimizers are divided
into four classes up to CR transformations on the Heisenberg group $H_{1}$:

\begin{enumerate}
\item $u(r)=\frac{\sqrt{3}}{2}r^{2}$,

\item $u(r)=-\frac{\sqrt{3}}{2}r^{2}$,

\item Type I shifted Heisenberg spheres: $\left( r^{2}+\frac{\sqrt{3}}{2}%
\rho _{0}^{2}\right) ^{2}+4u(r)^{2}=\rho _{0}^{4}$ for $\rho _{0}>0,$

\item Type II shifted Heisenberg spheres: $\left( r^{2}-\frac{\sqrt{3}}{2}%
\rho _{0}^{2}\right) ^{2}+4u(r)^{2}=\rho _{0}^{4}$ for $\rho _{0}>0$.
\end{enumerate}
\end{theorem}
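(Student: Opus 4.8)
The plan is to exploit the rotational symmetry to collapse the PDE (\ref{enzeeq7}) to an ordinary differential equation and then integrate it explicitly. First I would substitute $u=u(r)$, writing $x=r\cos\phi$, $y=r\sin\phi$; a direct computation gives $D=(u'^2+r^2)^{1/2}$ and reduces the two first-order terms and the Laplacian combination in (\ref{enzeeq7}) to functions of $r,u',u''$ alone. The pleasant surprise is that the resulting relation is a \emph{quadratic} in $u''$ whose discriminant simplifies to the perfect square $12\,r^6(u'^2+r^2)^3$, so it factors into two branches
\[
u''=\frac{u'(2u'^2+3r^2)\pm\sqrt{3}\,(u'^2+r^2)^{3/2}}{r^3}.
\]
This reduction is the routine-but-delicate part of the setup: the whole argument hinges on confirming that the discriminant is a square.

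Next I would linearize. Setting $u'=r\sinh\tau$ (so that $D=r\cosh\tau$) turns each branch into the separable, scale-invariant equation
\[
r\,\tau'=\cosh\tau\,\bigl(2\sinh\tau\pm\sqrt{3}\cosh\tau\bigr).
\]
The equilibria $\tau\equiv\mathrm{const}$ occur exactly at $\tanh\tau=\pm\tfrac{\sqrt3}{2}$ and integrate to $u=\pm\tfrac{\sqrt3}{2}r^2$, giving classes (1) and (2). For the remaining solutions the substitution $v=\tanh\tau$ yields $\frac{dv}{2v\mp\sqrt3}=\frac{dr}{r}$, hence $2\tanh\tau\mp\sqrt3=K r^2$ for a constant $K$. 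Feeding this back into $u'=r\sinh\tau$ and integrating in $\rho=r^2$ produces the first integral
\[
(\pm\sqrt3+Kr^2)^2+4K^2(u-c)^2=4.
\]
Since the vertical translation $u\mapsto u+c$ is a CR automorphism of $H_1$, I may normalize $c=0$; writing $K=\pm 2/\rho_0^2$ then gives precisely the Type I and Type II shifted Heisenberg sphere equations, the two signs being interchanged under $u\mapsto-u$.

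The heart of the argument, and where I expect the real work to lie, is the completeness and smoothness analysis. The parabolas are entire radial graphs and hence automatically complete. The sphere solutions, by contrast, are graphs only over a bounded $r$-interval, so I would show that the two sheets $u=\pm(\cdots)$ assemble into a smooth compact (thus complete) surface of revolution. Concretely, I would verify that the profile curve in the $(r,t)$ half-plane meets the rotational axis $r=0$ perpendicularly — equivalently $u'(0)=0$, which the first integral forces since $u(0)\neq0$ and since $u$ is a smooth function of $r^2$ there — so the surface is smooth at the two poles, and that at the equator $r=r_{\max}$ the profile has a vertical tangent yet remains a smooth closed arc, so the revolved surface is smooth there as well. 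Both poles and the equator are the characteristic (singular) points where the graph description and the quantities $\alpha,H,e_1$ degenerate, so the subtle point is that completeness is only demanded for the induced Euclidean metric; compactness of the closed-up surface of revolution then supplies it for free.

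Finally I would prove exhaustiveness and reduce modulo CR transformations. On any maximal nonsingular arc a complete rotationally symmetric minimizer solves one of the two ODE branches — and the branches can only meet where the discriminant $12r^6D^6$ vanishes, i.e. at $r=0$ — so each arc is a piece of a parabola or of a sphere, and metric completeness forces it to close up into one of the surfaces already found. It then remains to organize these by the CR automorphism group of $H_1$: the Heisenberg dilations $(x,y,t)\mapsto(\lambda x,\lambda y,\lambda^2 t)$ identify all spheres of a given type (collapsing the parameter $\rho_0$), the parabolas are dilation-invariant, and one checks that the four resulting families are pairwise CR-inequivalent. The principal obstacle throughout is the gluing at the singular points: one must leave the graph/ODE picture precisely there and argue $C^\infty$ closure directly, while simultaneously using completeness to exclude any truncated or branch-switching solution not on the list.
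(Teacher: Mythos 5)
Your proposal is correct and follows essentially the paper's own route: the same reduction of (\ref{enzeeq7}) to the quadratic (\ref{enzeeq8}) in $u_{rr}$ with the two branches (\ref{enzeeq9})--(\ref{enzeeq10}) (your discriminant $12r^6(u'^2+r^2)^3$ is the paper's $\frac{4}{3}r^2(u_r^2+r^2)^3$ after clearing denominators), a separable integration in which your hyperbolic substitution $u'=r\sinh\tau$, $v=\tanh\tau$ is equivalent to the paper's $w=u_r/r=\tan\theta$ via $\tanh\tau=\sin\theta$, and the same first integral producing exactly the four families. The one step you flag but only sketch---excluding a smooth surface obtained by switching between a type I and a type II piece at the equator---is exactly what the paper's Remark \ref{R-3-0} supplies, by computing that the profile second derivatives $r_{uu}(0)$ of the two types disagree, so any such glue is $C^1$ but not $C^2$.
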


We can obtain the solutions $u(r)=\pm \frac{\sqrt{3}}{2}r^{2}$ in (1)/(2) of
Theorem \ref{T-class} as a limit of a family of dilations applied to type
I/II shifted Heisenberg spheres. See Remark \ref{R-3-1} for a detailed
explanation. One may ask whether type I and type II shifted Heisenberg
spheres are the only closed (compact with no boundary), (smooth) absolute $%
E_{1}$-minimizers up to CR transformations on $H_{1}$ with no assumption on
rotational symmetry.

The Clifford torus in the standard CR 3-sphere is a critical point of $E_1$
and we had conjectured that it is a global minimizer for $E_1$ among all
surfaces of torus type \cite{CYZ18}. However it turns out that the Clifford
torus is actually not a local minimizer for $E_1$. We show this by computing
the second variation of $E_{1}$ for the Clifford torus. Let $F_{t}:\Sigma
\rightarrow S^{3}$ be a family of immersions such that $F_{t=0}=$inclusion
and%
\begin{equation*}
\frac{d}{dt}F_{t}=X=fe_{2}+gT,
\end{equation*}
\noindent where $T$ is the Reeb vector field, $e_{2}=Je_{1}$ for $e_{1}$
being a unit vector in $T\Sigma _{t}\cap \xi $, $\Sigma _{t}:=F_{t}(\Sigma )$%
. We assume $f$ and $g$ are supported in a domain of $\Sigma $ away from the
singular set of $\Sigma $. We have the following second variation formula:

\begin{theorem}
\label{T-2nd-var} Let $\Sigma $ be the Clifford torus $\Sigma _{\lbrack 1/%
\sqrt{2}]}$ in the CR 3-sphere $S^{3}$. Then it holds that%
\begin{equation}
\frac{d^{2}}{dt^{2}}|_{t=0}E_{1}(\Sigma _{t})=\frac{\sqrt{2}}{4}\int_{\Sigma
_{\lbrack 1/\sqrt{2}%
]}}[e_{1}e_{1}(f)^{2}+3e_{1}T(f)^{2}-7e_{1}(f)^{2}+9fTT(f)+12f^{2}]\Theta
\wedge e^{1}.  \label{SVF}
\end{equation}
\end{theorem}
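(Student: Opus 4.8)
The plan is to make the integrand of $E_1$ explicit, read off the (constant) background geometry of the Clifford torus, and then expand $E_1(\Sigma_t)$ to second order using the pseudohermitian moving-frame variation formulas, simplifying at the end via the criticality of $\Sigma_{[1/\sqrt2]}$.

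First I would reduce the functional. The standard CR $3$-sphere is torsion-free and has constant Webster curvature $W\equiv W_0$, so in (\ref{1-0}) one has $\im A_{11}=0$ throughout (the torsion tensor vanishes in every adapted frame), and hence $E_1(\Sigma_t)=\int_{\Sigma_t}Q^{3/2}\,\Theta\wedge e^1$ with $Q=e_1\alpha+\tfrac12\alpha^2+\tfrac16H^2+\tfrac14W_0$. On $\Sigma_{[1/\sqrt2]}$ the Hopf circle action together with the rotation symmetry acts transitively, so $\alpha\equiv\alpha_0$ and $H\equiv H_0$ are constants, $e_1\alpha\equiv0$, and therefore $Q\equiv Q_0=\tfrac12\alpha_0^2+\tfrac16H_0^2+\tfrac14W_0$ is a positive constant; in particular the absolute value may be dropped for small $t$. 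A key preliminary observation is that rotational symmetry forces the Reeb field $T$ to be tangent to $\Sigma_{[1/\sqrt2]}$, while $e_2=Je_1$ is its unit normal in the adapted metric. Thus $X=fe_2+gT$ splits into a genuine normal part $fe_2$ and a tangential part $gT$; since $\Sigma_{[1/\sqrt2]}$ is a critical point of $E_1$ \cite{CYZ18}, the second variation is a quadratic form in the normal component alone, so the $gT$-contribution vanishes and we may take $g=0$. This already explains why only $f$ appears in (\ref{SVF}).

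Next comes the technical heart: the first- and second-order variation formulas for the surface invariants under the normal flow $fe_2$. Using the structure equations of the ambient pseudohermitian manifold and the adapted coframe on $\Sigma_t$, I would compute $\dot\theta$ (equivalently the infinitesimal rotation $\dot e_1$ of the characteristic direction), $\dot\alpha$, $\dot H$, and the first variation $\delta(\Theta\wedge e^1)=\psi\,\Theta\wedge e^1$ of the area measure, each expressed through $f$ and its derivatives $e_1f$, $Tf$. Since $e_1$ itself rotates, the commutators $[e_1,T]$ and $[e_1,e_2]$ from the structure equations enter, and these are what later produce the mixed second-derivative terms. Feeding these into $\frac{d}{dt}\big|_0E_1=\int(\tfrac32Q_0^{1/2}\dot Q+Q_0^{3/2}\psi)\,\Theta\wedge e^1=0$ recovers the criticality of the torus and fixes the relations among $\alpha_0,H_0,W_0$; in particular the normalization yields $Q_0=\tfrac12$, which is the source of the prefactor $Q_0^{3/2}=\tfrac{\sqrt2}{4}$ in (\ref{SVF}).

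For the second variation I would differentiate once more and assemble
\[\frac{d^2}{dt^2}\Big|_0E_1=\int_{\Sigma_{[1/\sqrt2]}}\Big(\tfrac34Q_0^{-1/2}\dot Q^2+\tfrac32Q_0^{1/2}\ddot Q+3Q_0^{1/2}\dot Q\,\psi+Q_0^{3/2}\chi\Big)\,\Theta\wedge e^1,\]
where $\chi\,\Theta\wedge e^1$ is the second variation of the measure. After substituting the formulas for $\dot Q,\ddot Q,\psi,\chi$ and integrating by parts --- which is clean here because $f$ has compact support away from the singular set and the background invariants and structure constants of $\{e_1,e_2,T\}$ are constant --- everything collapses to an integral of a quadratic form in $f$; the $T^2$ contributions give the $fTT(f)$ term, the mixed commutator terms give $e_1T(f)^2$, and the remaining terms give $e_1e_1(f)^2$, $e_1(f)^2$ and $f^2$. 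The main obstacle is precisely this step: obtaining the correct second-order variations of $\alpha$ and $H$ including the rotation of the frame $e_1$, verifying that all $g$-dependent terms and all cross-terms in the background invariants cancel, and pinning down the exact coefficients $1,3,-7,9,12$ together with the overall constant $\tfrac{\sqrt2}{4}$ --- the place where sign and normalization errors are most likely to creep in.
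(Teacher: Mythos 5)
Your overall strategy --- differentiate the variational formula twice, track the evolution of $\alpha$, $H$ and the frame under the flow $fe_{2}+gT$, and integrate by parts on the torus --- is the same as the paper's, but as written the proposal has a genuine gap: it is a programme, not a proof. All of the quantitative content of Theorem \ref{T-2nd-var} resides in the evolution equations that you defer, namely the analogues of Lemma \ref{Lemma4}, Lemma \ref{Lemma6} and Lemma \ref{L-4-5}: on the Clifford torus these give $\frac{dH}{dt}|_{t=0}=e_{1}e_{1}(h)+4h$, $\frac{d}{dt}|_{t=0}V(\alpha )=VV(h)$ and $\frac{d}{dt}|_{t=0}e_{1}(|H_{cr}|\mathfrak{f})=\frac{1}{2}e_{1}e_{1}e_{1}e_{1}(h)+2e_{1}e_{1}(h)+\frac{3}{2}e_{1}Ve_{1}V(h)$, and it is precisely these identities, fed into (\ref{secondvariation1}), that produce the intermediate formula (\ref{SVF1}) and hence the coefficients $1,3,-7,9,12$ and the factor $\frac{\sqrt{2}}{4}$. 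You explicitly flag this step as the place where ``sign and normalization errors are most likely to creep in'' and do not carry it out; but nothing in the remainder of the proposal distinguishes (\ref{SVF}) from the same formula with different coefficients, so the theorem is not actually established.

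A secondary structural point: your direct expansion $\int (\frac{3}{4}Q_{0}^{-1/2}\dot{Q}^{2}+\frac{3}{2}Q_{0}^{1/2}\ddot{Q}+3Q_{0}^{1/2}\dot{Q}\psi +Q_{0}^{3/2}\chi )$ is correct in form (with $\chi =\dot{\psi}+\psi ^{2}$), but it demands second-order data $\ddot{Q}$ and $\chi$ that the paper never needs: since $\Sigma _{[1/\sqrt{2}]}$ is critical, the paper differentiates the first-variation density $\mathcal{E}_{1}h\,\Theta \wedge e^{1}$ of Theorem \ref{firstvariation1}, and because $\mathcal{E}_{1}=0$ at $t=0$ only $\frac{d}{dt}\mathcal{E}_{1}$ survives, so only \emph{first} $t$-derivatives of the invariants enter. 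Moreover, on the torus $\alpha =H=0$ and $H_{cr}\equiv \frac{1}{2}$, so $e_{1}(H_{cr})=0$ and $|H_{cr}|\mathfrak{f}=0$ pointwise, which kills the $\dot{Q}^{2}$- and $\dot{Q}\psi$-type cross terms before any differentiation; in your scheme those cancellations would have to be verified by hand. Two smaller inaccuracies: criticality does not ``fix the relations among $\alpha _{0},H_{0},W_{0}$'' --- rather $\alpha _{0}=H_{0}=0$ and $W=2$ follow directly from the geometry (tangency of $T$, $\rho _{1}=\rho _{2}$), giving $H_{cr}=\frac{1}{2}$; and attributing the prefactor to $Q_{0}^{3/2}$ is numerically right but structurally accidental, since in the bookkeeping it arises as $|H_{cr}|^{-1/2}$ multiplying the half-integer coefficients in (\ref{SVF1}). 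On the positive side, your reduction to $g=0$ via the standard fact that second variation at a critical point depends only on the normal component is legitimate and consistent with the paper, where the same effect is automatic because $h=f-\alpha g=f$ when $\alpha =0$; and your final integration-by-parts step, using $[e_{1},V]=0$, $V=T$ and $\alpha =0$, is exactly the paper's.
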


Recall (see Subsection \ref{Sub4-3}) that for the Clifford torus $\Sigma
_{\lbrack 1/\sqrt{2}]}$, $e_{1}=-\frac{\partial }{\partial \varphi _{1}}+%
\frac{\partial }{\partial \varphi _{2}},$ $V=T=\partial _{\varphi
_{1}}+\partial _{\varphi _{2}},$ $z=(z_{1},z_{2})=\frac{1}{\sqrt{2}}%
(e^{i\varphi _{1}},e^{i\varphi _{2}}).$ We may take $f(z)=v_{l}(z)=\cos
l(\varphi _{1}+\varphi _{2}),\quad l=1,2,\cdots .$ Then $%
TT(v_{l})=-4l^{2}v_{l},$ $e_{1}(v_{l})=0$ and for 
\begin{equation*}
\frac{d}{dt}\Sigma _{t}=v_{l}e_{2},
\end{equation*}%
\noindent we have $f=v_{l}$ and 
\begin{eqnarray*}
\frac{d^{2}}{dt^{2}}|_{t=0}E_{1}(\Sigma _{t}) &=&\frac{\sqrt{2}}{4}%
\int_{\Sigma _{\lbrack 1/\sqrt{2}]}}[9v_{l}TT(v_{l})+12v_{l}^{2}]\Theta
\wedge e^{1} \\
&=&3\sqrt{2}\int_{\Sigma _{\lbrack 1/\sqrt{2}]}}(1-3l^{2})v_{l}^{2}\Theta
\wedge e^{1} \\
&<&0.
\end{eqnarray*}

\begin{corollary}
\label{C-1-1} The Clifford torus $\Sigma _{\lbrack 1/\sqrt{2}]}$ is not an $%
E_{1}$-minimizer among all surfaces of torus type in the CR 3-sphere $S^{3}$.
\end{corollary}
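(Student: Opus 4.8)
The plan is to deduce the corollary directly from the second variation formula of Theorem~\ref{T-2nd-var} by exhibiting a single admissible variation along which $E_1$ strictly decreases to second order. Since the Clifford torus $\Sigma_{[1/\sqrt{2}]}$ is a critical point of $E_1$, its first variation vanishes, so for any family $F_t$ with $\frac{d}{dt}F_t=X$ one has
\begin{equation*}
E_1(\Sigma_t)=E_1(\Sigma_{[1/\sqrt{2}]})+\tfrac12\Big(\tfrac{d^2}{dt^2}\big|_{t=0}E_1(\Sigma_t)\Big)t^2+o(t^2).
\end{equation*}
Thus it suffices to produce one direction $X$ with $\frac{d^2}{dt^2}|_{t=0}E_1(\Sigma_t)<0$; then $E_1(\Sigma_t)<E_1(\Sigma_{[1/\sqrt{2}]})$ for all small $t\neq0$, and because each $\Sigma_t=F_t(\Sigma)$ is again an immersed torus, the Clifford torus cannot minimize $E_1$ among torus-type surfaces.

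First I would choose the normal variation $X=v_l e_2$ with $v_l(z)=\cos l(\varphi_1+\varphi_2)$ for a fixed integer $l\geq1$, so that $f=v_l$ and $g=0$ in the notation preceding Theorem~\ref{T-2nd-var}. Using the explicit frame recalled in Subsection~\ref{Sub4-3}, namely $e_1=-\partial_{\varphi_1}+\partial_{\varphi_2}$ and $T=\partial_{\varphi_1}+\partial_{\varphi_2}$, I would verify the two identities $e_1(v_l)=0$ and $TT(v_l)=-4l^2v_l$. Substituting these into (\ref{SVF}) annihilates every term carrying an $e_1$-derivative of $f$ and leaves
\begin{equation*}
\tfrac{d^2}{dt^2}\big|_{t=0}E_1(\Sigma_t)=\tfrac{\sqrt{2}}{4}\int_{\Sigma_{[1/\sqrt{2}]}}\big[9v_lTT(v_l)+12v_l^2\big]\Theta\wedge e^1=3\sqrt{2}\int_{\Sigma_{[1/\sqrt{2}]}}(1-3l^2)v_l^2\,\Theta\wedge e^1.
\end{equation*}
Since $1-3l^2\leq-2<0$ for every $l\geq1$ while $\int_{\Sigma_{[1/\sqrt{2}]}}v_l^2\,\Theta\wedge e^1>0$, this integral is strictly negative, providing the desired destabilizing direction.

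The one point that genuinely needs checking---and the main, if modest, obstacle---is the admissibility of this variation, since Theorem~\ref{T-2nd-var} is stated for $f,g$ supported away from the singular set whereas $v_l$ is globally defined on the torus. I would dispose of this by noting that the Clifford torus is everywhere nonsingular: its tangent plane contains the Reeb field $T$, which is transverse to the contact distribution $\xi$, so $T\Sigma\neq\xi$ at every point and the quantities $e_1,e_2,\alpha,H$ are defined globally. On a closed, nowhere-singular surface the integrations by parts underlying (\ref{SVF}) produce no boundary terms, so the formula is valid for the smooth global function $v_l$, and the strict negativity above finishes the proof. A single $l=1$ already suffices; the family $\{v_l\}$ merely exhibits instability at all frequencies.
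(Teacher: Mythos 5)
Your proposal is correct and follows essentially the same route as the paper: the authors also take $f=v_l=\cos l(\varphi_1+\varphi_2)$, use $e_1(v_l)=0$ and $TT(v_l)=-4l^2v_l$ to reduce (\ref{SVF}) to $3\sqrt{2}\int_{\Sigma_{[1/\sqrt{2}]}}(1-3l^2)v_l^2\,\Theta\wedge e^1<0$, and conclude from the criticality of the Clifford torus. Your additional observation that $\Sigma_{[1/\sqrt{2}]}$ is nowhere singular (since $T\in T\Sigma$ is transverse to $\xi$, so $T\Sigma\neq\xi$ everywhere), which justifies applying the formula to the globally defined $v_l$, is a point the paper leaves implicit, and it is a worthwhile clarification.
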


We wonder what $E_{1}$-minimizers are if exist in the above corollary. 

The paper is organized as follows. In Section \ref{Sec2}, we show the
hyperbolicity of equation (\ref{enzeeq7}) and the CR invariance of $dA_{1}$
(see (\ref{1-0})). In Section \ref{Sec3}, we prove Theorem \ref{mainthm2}.
In Section \ref{Sec4c}, we show the classification theorem Theorem \ref%
{T-class}. Finally Theorem \ref{T-2nd-var} is proved in Section \ref{Sec4}.

\bigskip

\begin{acknowledgement}
J.-H. Cheng (resp. H.-L. Chiu) would like to thank the National Science and
Technology Council of Taiwan for the support: grant no. 112-2115-M-001-012
(resp. 112-2115-M-007-009-MY3). J.-H. Cheng would also like to thank the
National Center for Theoretical Sciences for the constant support. P. Yang
acknowledges support from the NSF for the grant DMS 1509505. Y. Zhang
acknowledges support from National Key Research and Development Project
SQ2020YFA070080 and NSFC 12071450.
\end{acknowledgement}

\section{Hyperbolicity and CR invariance\label{Sec2}}

We refer basic notations such as contact form $\Theta ,$ characteristic unit
tangent $e_{1},$ $p$-mean curvature $H,$ Legendrian normal $e_{2},$ Reeb
vector field $T$, \textquotedblleft the deviation function" $\alpha $, etc.
to \cite[Section 2]{CHMY05}. For a (nonsingular: $D\neq 0$) graph $t=u(x,y)$
in the Heisenberg group, We can express $e_{1},$ $\alpha ,$ $H$ as follows:%
\begin{eqnarray}
e_{1} &=&-\frac{u_{y}+x}{D}(\frac{\partial }{\partial x}+y\frac{\partial }{%
\partial t})+\frac{u_{x}-y}{D}(\frac{\partial }{\partial y}-x\frac{\partial 
}{\partial t}),  \label{A-1} \\
\alpha  &=&\mp 1/D,\text{ }D:=[(u_{x}-y)^{2}+(u_{y}+x)^{2}]^{1/2},  \notag \\
H
&=&D^{-3}\{(u_{y}+x)^{2}u_{xx}-2(u_{y}+x)(u_{x}-y)u_{xy}+(u_{x}-y)^{2}u_{yy}%
\}  \notag
\end{eqnarray}

\noindent (\cite[p.137 and (2.10)]{CHMY05}). Substituting (\ref{A-1}) into (%
\ref{enzeeq1}) yields (\ref{enzeeq7}$)$. We then observe that (\ref{enzeeq7}$%
)$ is a hyperbolic equation in the sense that the linearized operator is
hyperbolic \cite[(89) on p.398]{Eva98}.

\begin{proposition}
\label{A-P-1} Let $F=F(x,y,u,u_{x},u_{y},u_{xx},u_{xy},u_{yy})$ be as in (%
\ref{enzeeq7}). Then we have%
\begin{equation}
F_{u_{xx}}F_{u_{yy}}-\frac{1}{4}F_{u_{xy}}^{2}=-\frac{1}{4}<0  \label{hyper}
\end{equation}
\end{proposition}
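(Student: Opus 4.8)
The plan is to compute the three first partials $F_{u_{xx}}$, $F_{u_{xy}}$, $F_{u_{yy}}$ directly from the explicit expression (\ref{enzeeq7}) and then to verify (\ref{hyper}) by a short algebraic manipulation. Throughout write $p=u_x-y$ and $q=u_y+x$, so that $D^2=p^2+q^2$ and, by (\ref{1-1}), $\cos\theta=p/D$, $\sin\theta=q/D$. First I would differentiate $D$ to record how the second derivatives enter:
\[
D_x=\frac{p\,u_{xx}+q\,u_{xy}+q}{D},\qquad
D_y=\frac{p\,u_{xy}+q\,u_{yy}-p}{D}.
\]
Substituting these into (\ref{enzeeq7}) and collecting terms, one finds that $F$ depends on $(u_{xx},u_{xy},u_{yy})$ only through the two expressions
\[
T:=\frac{q}{D}D_x-\frac{p}{D}D_y
=\frac{pq\,u_{xx}+(q^2-p^2)u_{xy}-pq\,u_{yy}}{D^2}+1,
\]
\[
G:=\Delta u-\frac{p}{D}D_x-\frac{q}{D}D_y
=\frac{q^2 u_{xx}-2pq\,u_{xy}+p^2 u_{yy}}{D^2},
\]
so that $F=T-\tfrac16 G^2-\tfrac12$; note $G=DH$ recovers the $p$-mean curvature. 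The key structural observation is that both $T$ and $G$ are \emph{linear} in $(u_{xx},u_{xy},u_{yy})$, with coefficients built only from $p,q,D$.

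Next I would take first partials. Using $\cos\theta=p/D$, $\sin\theta=q/D$, the coefficients of $T$ and of $G$ become $(\cos\theta\sin\theta,\ \sin^2\theta-\cos^2\theta,\ -\cos\theta\sin\theta)$ and $(\sin^2\theta,\ -2\cos\theta\sin\theta,\ \cos^2\theta)$ respectively, so that
\[
F_{u_{xx}}=\cos\theta\sin\theta-\tfrac13 G\sin^2\theta,\qquad
F_{u_{yy}}=-\cos\theta\sin\theta-\tfrac13 G\cos^2\theta,
\]
\[
F_{u_{xy}}=-\cos 2\theta+\tfrac13 G\sin 2\theta.
\]
Forming $F_{u_{xx}}F_{u_{yy}}-\tfrac14 F_{u_{xy}}^2$ and expanding, every term carrying a factor of $G$---both the terms linear in $G$ and those quadratic in $G$---cancels in pairs, and the remainder collapses to $-\tfrac14(\sin^2 2\theta+\cos^2 2\theta)=-\tfrac14$, which is (\ref{hyper}).

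The one point requiring care is the \emph{complete} cancellation of the $G$-dependent terms, since this is exactly what makes (\ref{hyper}) hold pointwise, with no reference to the value of $H$ or to the equation $F=0$. Conceptually the cancellation is transparent: the symbol of $G=DH$ is the perfect square $(q\xi-p\eta)^2/D^2$, i.e.\ a rank-one quadratic form, while the direction $(p,q)$ is characteristic for the symbol of the linear part $T$ (one checks that $T$'s symbol vanishes at $(\xi,\eta)=(p,q)$). Hence the rank-one perturbation of the symbol matrix produced by the $-\tfrac16 G^2$ term leaves its $2\times 2$ determinant unchanged, so the discriminant equals that of $T$ alone, which a direct check shows to be $-\tfrac14$. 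Either this determinant argument or the explicit trigonometric expansion above completes the proof.
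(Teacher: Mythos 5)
Your proof is correct and follows essentially the same route as the paper: you reduce $F$ to $T-\tfrac16 G^2-\tfrac12$ with $G=DH$ (the paper's $A$ in (\ref{A-4})), obtain exactly the partial derivatives (\ref{A-6a})--(\ref{A-6b}) in the $\theta$-variables, and then carry out explicitly the cancellation that the paper dismisses as ``a direct computation.'' Your closing observation---that the cancellation is forced because the symbol of $G$ is the rank-one quadratic form $(q\xi-p\eta)^2/D^2$ whose perpendicular direction $(p,q)$ is a null direction of the symbol of $T$, so the rank-one perturbation leaves the $2\times 2$ determinant of the symbol matrix unchanged---is a correct conceptual bonus absent from the paper, but it does not alter the substance of the argument.
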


\begin{proof}
From (\ref{1-1}) we recall that $\theta $ satisfies%
\begin{equation}
\cos \theta =\frac{u_{x}-y}{D},\sin \theta =\frac{u_{y}+x}{D}.  \label{A-2}
\end{equation}

\noindent In terms of $\theta ,$ $D$ and $H,$ we write (see (\ref{enzeeq7})
for $F$)%
\begin{equation}
F=(\sin \theta )D_{x}-(\cos \theta )D_{y}-\frac{1}{6}D^{2}H^{2}-\frac{1}{2}
\label{A-3}
\end{equation}

\noindent where we have used 
\begin{eqnarray}
A & {:=} &\Delta u-(\cos \theta )D_{x}-(\sin \theta )D_{y}  \label{A-4} \\
&=&(\sin ^{2}\theta )u_{xx}-(2\sin \theta \cos \theta )u_{xy}+(\cos
^{2}\theta )u_{yy}\overset{(\ref{A-1})}{=}DH.  \notag
\end{eqnarray}

\noindent By the chain rule we easily get%
\begin{eqnarray}
D_{x} &=&(\cos \theta )u_{xx}+(\sin \theta )(u_{yx}+1),  \label{A-5} \\
D_{y} &=&(\cos \theta )(u_{xy}-1)+(\sin \theta )u_{yy}.  \notag
\end{eqnarray}

\noindent From (\ref{A-3}), (\ref{A-5}) and (\ref{A-4}), we can then compute%
\begin{equation}
F_{u_{xx}}=\sin \theta \cos \theta -\frac{1}{6}\cdot 2AA_{u_{xx}}=\sin
\theta \cos \theta -\frac{1}{3}A\sin ^{2}\theta .  \label{A-6a}
\end{equation}

\noindent Similarly, we also obtain%
\begin{eqnarray}
F_{u_{yy}} &=&-\cos \theta \sin \theta -\frac{1}{3}A\cos ^{2}\theta ,
\label{A-6b} \\
F_{u_{xy}} &=&F_{u_{yx}}=\sin ^{2}\theta -\cos ^{2}\theta +\frac{2}{3}A\sin
\theta \cos \theta .  \notag
\end{eqnarray}

\noindent Finally, a direct computation using (\ref{A-6a}) and (\ref{A-6b})
yields (\ref{hyper}).
\end{proof}

We recall the CR invariance of the surface area element $dA_{1}.$

\begin{proposition}
\label{A-P-2} (\cite[Theorem 2]{CYZ18}) On a domain of nonsingular points of
a surface, the surface area element%
\begin{equation*}
dA_{1}=|e_{1}\alpha +\frac{1}{2}\alpha ^{2}+\frac{1}{6}H^{2}+\frac{1}{4}W-%
\func{Im}A_{11}|^{3/2}\Theta \wedge e^{1}
\end{equation*}%
is a (pointwise) CR invariant, i.e. under the conformal change of contact
form: $\tilde{\Theta}=\lambda ^{2}\Theta ,$ $\lambda >0,$ we have $d\tilde{A}%
_{1}=dA_{1}.$
\end{proposition}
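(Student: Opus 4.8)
The plan is to establish the invariance directly from the conformal transformation laws of the underlying pseudohermitian data, organizing everything by conformal weight. Write $Q := e_1\alpha+\frac12\alpha^2+\frac16 H^2+\frac14 W-\func{Im}A_{11}$, so that $dA_1=|Q|^{3/2}\,\Theta\wedge e^1$. The goal is to show that $Q$ carries conformal weight $-2$, i.e. $\tilde Q=\lambda^{-2}Q$, while $\Theta\wedge e^1$ carries weight $+3$; then $|\tilde Q|^{3/2}=\lambda^{-3}|Q|^{3/2}$ exactly compensates the factor $\lambda^{3}$ coming from the $2$-form, giving $d\tilde A_1=dA_1$.

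First I would pin down the frame weights. Keeping $J$ fixed and setting $\tilde\Theta=\lambda^2\Theta$, the Levi metric scales by $\lambda^2$, so the characteristic unit tangent obeys $\tilde e_1=\lambda^{-1}e_1$ (its direction $T\Sigma\cap\xi$ is unchanged, only the normalization rescales), and the dual coframe satisfies $\tilde e^1=\lambda e^1+c\,\Theta$, where $c$ is determined by the rescaled Reeb field and involves first derivatives of $\lambda$. Because $\Theta\wedge\Theta=0$, this gives at once $\tilde\Theta\wedge\tilde e^1=\lambda^2\Theta\wedge(\lambda e^1+c\Theta)=\lambda^3\,\Theta\wedge e^1$, confirming the weight $+3$ of the surface $2$-form and reducing the whole problem to the identity $\tilde Q=\lambda^{-2}Q$.

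Next I would assemble the transformation of each summand with $f:=\log\lambda$. The surface invariants $\alpha$ and $H$ have leading weight $-1$ together with inhomogeneous terms that are first order in the coframe derivatives of $f$; the Webster scalar curvature $W$ transforms by the CR Yamabe law, with an inhomogeneous term of second order in $f$ (schematically $\lambda^{-3}\Delta_b\lambda$); and the torsion $\func{Im}A_{11}$ transforms by Lee's formula, again contributing a second-order term in the covariant derivatives of $f$. Substituting into $Q$ I would sort the result into three groups: (i) the homogeneous part $\lambda^{-2}Q$; (ii) the pure second-order-in-$f$ terms arising from $\frac14 W$, from $-\func{Im}A_{11}$, and from $e_1$ differentiating the inhomogeneous part of $\alpha$; and (iii) the quadratic terms in the first derivatives of $f$ arising from $\frac12\alpha^2$, from $\frac16 H^2$, and from the cross terms in $e_1\alpha$.

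The crux, and the main obstacle, is to show that groups (ii) and (iii) vanish identically; this is precisely where the coefficients $\frac12,\frac16,\frac14$ and the unit coefficient of $\func{Im}A_{11}$ are forced. I would carry out this bookkeeping in an admissible coframe adapted to $\Sigma$, so that the verification becomes a finite list of algebraic identities among the first and second coframe derivatives of $f$ — the second-order terms from the $W$- and $A_{11}$-laws cancelling those produced by $e_1\alpha$, and the squared first-order terms from $\alpha^2$, $H^2$ and $e_1\alpha$ cancelling among themselves. Once these cancellations are verified we obtain $\tilde Q=\lambda^{-2}Q$, and combined with the weight count of the previous step this yields the asserted invariance $d\tilde A_1=dA_1$, recovering \cite[Theorem 2]{CYZ18}.
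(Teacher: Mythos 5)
Your proposal is correct and takes essentially the same route as the paper's own proof: both substitute the conformal transformation laws of $e_{1},\alpha ,H,W,A_{11}$ (the paper records them explicitly, e.g.\ $\tilde{\alpha}=\lambda ^{-1}\alpha +\lambda ^{-2}e_{1}(\lambda )$, $\tilde{H}=\lambda ^{-1}H-3\lambda ^{-2}e_{2}(\lambda )$, $\tilde{W}=\lambda ^{-2}(W-4\lambda ^{-1}\Delta _{b}\lambda )$, and Lee's law for $\tilde{A}_{11}$) into the integrand, verify by direct cancellation that it picks up exactly the factor $\lambda ^{-2}$, and combine this with $\tilde{\Theta}\wedge \tilde{e}^{1}=\lambda ^{3}\Theta \wedge e^{1}$. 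The differences are only organizational: you sort terms by conformal weight and defer the same ``lengthy but straightforward'' cancellation that the paper also leaves unwritten, and your coframe computation $\tilde{e}^{1}=\lambda e^{1}+c\,\Theta$ (indeed $c=-e_{2}(\lambda )$) correctly fills in the $\lambda ^{3}$ factor that the paper asserts without derivation.
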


\begin{proof}
(direct verification which is different from the original proof via the
Cartan method) The transformation laws of $e_{1},e_{2},T,$ $\alpha ,$ $H,$ $%
W $ and $A_{11}$ under the conformal change of contact form: $\tilde{\Theta}%
=\lambda ^{2}\Theta ,$ $\lambda >0,$ read as follows (on a domain of
nonsingular points): $i)$ $\tilde{e}_{1}=\lambda ^{-1}e_{1},$ $ii)$ $\tilde{e%
}_{2}=\lambda ^{-1}e_{2},$ $iii)$ $\tilde{T}=\lambda ^{-2}T+\lambda
^{-3}(e_{2}(\lambda )e_{1}-e_{1}(\lambda )e_{2}),$ $iv)$ $\tilde{\alpha}%
=\lambda ^{-1}\alpha +\lambda ^{-2}e_{1}(\lambda ),$ $v)$ $\tilde{H}=\lambda
^{-1}H-3\lambda ^{-2}e_{2}(\lambda ),$ $vi)$ $\tilde{W}=\lambda
^{-2}(W-4\lambda ^{-1}\Delta _{b}\lambda ),$ $vii)$ $\tilde{A}_{11}=\lambda
^{-2}(A_{11}+2i\lambda ^{-1}\lambda _{11}-6i\lambda ^{-2}\lambda _{1}\lambda
_{1}).$ A lengthy but straightforward computation yields%
\begin{equation*}
|\tilde{e}_{1}\tilde{\alpha}+\frac{1}{2}\tilde{\alpha}^{2}+\frac{1}{6}\tilde{%
H}^{2}+\frac{1}{4}\tilde{W}-\func{Im}\tilde{A}_{11}|=\lambda
^{-2}|e_{1}\alpha +\frac{1}{2}\alpha ^{2}+\frac{1}{6}H^{2}+\frac{1}{4}W-%
\func{Im}A_{11}|
\end{equation*}

\noindent while $\tilde{\Theta}\wedge \tilde{e}^{1}=\lambda ^{3}\Theta
\wedge e^{1}.$ That $d\tilde{A}_{1}=dA_{1}$ follows.
\end{proof}

\section{Proof of Theorem \protect\ref{mainthm2}: the local uniqueness\label%
{Sec3}}

For a nonsingular ($T\Sigma \neq \xi $) surface $\Sigma $ in a contact
3-manifold ($M,\xi ).$ We choose a characteristic tangent $e_{1}\in T\Sigma
\cap \xi ,$ unit with respect to the Levi-metric $\frac{1}{2}d\Theta (\cdot
,J\cdot )$ where $\Theta $ is the stardard contact form in $M=H_{1}.$ For
basic material in the theory of surfaces in the Heisenberg group, we refer
the reader to \cite{CHMY05} and \cite{CHMY12}. We denote by $N^{\perp }$
(resp. $N)$ the projection of $-e_{1}$ (resp. $-(e_{2}+\alpha ^{-1}T)$) onto
the domain of $u$ in the $xy$-plane. From the theory of hyperbolic
equations, we find out two \textquotedblleft characteristic" directions to
be $N^{\perp }$ and $N+\frac{1}{3}\alpha ^{-1}HN^{\perp }$: (see the
formulas for $F_{u_{xx}}$, $F_{u_{xy}},$ $F_{u_{yy}}$ in (\ref{A-6a}) and (%
\ref{A-6b}))%
\begin{equation*}
\begin{split}
& (N+\frac{1}{3}\alpha ^{-1}HN^{\perp })\circ N^{\perp } \\
=& F_{u_{xx}}\partial _{x}^{2}+F_{u_{xy}}\partial _{x}\partial
_{y}+F_{u_{yy}}\partial _{y}^{2}\ \ \text{mod lower order terms}.
\end{split}%
\end{equation*}%
The Codazzi-like equation (see, for instance, \cite[(3.24) with $W=0$]{CYZ18}%
) reads as 
\begin{equation}
-\alpha N(H)=N^{\perp }N^{\perp }\alpha -6\alpha N^{\perp }(\alpha )+4\alpha
^{3}+\alpha H^{2}.  \label{codeq1}
\end{equation}

We have the following identities (see (\ref{1-1}) for the notation $\theta $)%
\begin{equation}
N^{\perp }\theta =-H  \label{iden1}
\end{equation}%
\noindent (see, for instance, \cite[(1.14) with $V=N^{\perp }$]{CHMY12}, not
to be confused with the notation $V$ in \cite{CYZ18}) and 
\begin{equation}
N^{\perp }(\alpha )-\alpha N(\theta )-2\alpha ^{2}=0.  \label{iden2}
\end{equation}

\noindent (see, for instance, \cite[(1.15) with the notation $D$ replaced by 
$-\alpha ^{-1}$ and $V=N^{\perp }$]{CHMY12}).

\begin{proposition}
\label{P-1} For an absolute $E_1$-minimizer, we have 
\begin{equation}
\alpha (N+\frac{1}{3}\alpha ^{-1}HN^{\perp })\theta +N^{\perp }(\alpha
)=-\alpha ^{2}.  \label{emini1}
\end{equation}
\end{proposition}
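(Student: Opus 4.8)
The plan is to express the left-hand side of (\ref{emini1}) and then eliminate every term using the three available relations: the vanishing-energy equation (\ref{enzeeq1}) together with the two identities (\ref{iden1}) and (\ref{iden2}). Expanding the directional-derivative operator, the left-hand side of (\ref{emini1}) reads
\[
\alpha\, N(\theta)+\tfrac{1}{3}H\,N^{\perp}(\theta)+N^{\perp}(\alpha),
\]
so the task is simply to show this quantity equals $-\alpha^{2}$.

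The one preliminary fact I would establish first is that, when acting on functions that live on the surface $\Sigma$ (such as $\alpha$, $\theta$, $H$), the operator $N^{\perp}$ coincides with $-e_{1}$. This is because $N^{\perp}$ is by definition the projection of $-e_{1}$ to the $xy$-plane, and since $e_{1}$ is tangent to the graph $t=u(x,y)$, its vertical $\partial_{t}$-component contributes nothing when one differentiates a function already written in the $(x,y)$-coordinates of the graph. Consequently the energy equation (\ref{enzeeq1}), namely $e_{1}\alpha+\tfrac12\alpha^{2}+\tfrac16 H^{2}=0$, rewrites as
\[
N^{\perp}(\alpha)=\tfrac{1}{2}\alpha^{2}+\tfrac{1}{6}H^{2}.
\]

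With this in hand the remainder is a short algebraic substitution. First I would use (\ref{iden1}), $N^{\perp}\theta=-H$, to turn the middle term $\tfrac13 H\,N^{\perp}(\theta)$ into $-\tfrac13 H^{2}$. Next I would use (\ref{iden2}) in the form $\alpha N(\theta)=N^{\perp}(\alpha)-2\alpha^{2}$ to replace the first term, so that the whole expression becomes
\[
2\,N^{\perp}(\alpha)-2\alpha^{2}-\tfrac{1}{3}H^{2}.
\]
Finally, substituting $N^{\perp}(\alpha)=\tfrac12\alpha^{2}+\tfrac16 H^{2}$ from the energy equation collapses this to $\alpha^{2}+\tfrac13 H^{2}-2\alpha^{2}-\tfrac13 H^{2}=-\alpha^{2}$, which is exactly (\ref{emini1}). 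Note in particular that the direction $N$ never needs to be evaluated on its own: the term $\alpha N(\theta)$ is disposed of entirely through (\ref{iden2}).

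There is no serious analytic obstacle here; once the operator identity $N^{\perp}=-e_{1}$ on surface functions is recognized, the proposition is just a linear combination of (\ref{enzeeq1}), (\ref{iden1}) and (\ref{iden2}) in which the $H^{2}$ contributions cancel. The only point demanding genuine care is the bookkeeping of signs — both in the $\mp$ ambiguity of $\alpha=\mp 1/D$ and in the projection convention defining $N^{\perp}$ — so I would verify the sign in $N^{\perp}(\alpha)=\tfrac12\alpha^{2}+\tfrac16 H^{2}$ directly from the expression (\ref{A-1}) for $e_{1}$ before assembling the three relations.
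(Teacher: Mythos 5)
Your proposal is correct and takes essentially the same route as the paper's proof: both rewrite the vanishing-energy equation (\ref{enzeeq1}) in the domain of $u$ as $2N^{\perp}(\alpha)=\alpha^{2}+\frac{1}{3}H^{2}$ (the paper's (\ref{enzeeq15})) and then eliminate the remaining terms by the same linear combination of (\ref{iden1}) and (\ref{iden2}), with the $H^{2}$ contributions cancelling. The only difference is presentational: you make explicit the sign convention $e_{1}=-N^{\perp}$ on functions of $(x,y)$, which the paper leaves implicit in the phrase \textquotedblleft writing down in the domain of $u$\textquotedblright.
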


\begin{proof}
First of all, we notice that, writing down in the domain of $u$, (\ref%
{enzeeq1}) reads as 
\begin{equation}
2N^{\perp }(\alpha )=\alpha ^{2}+\frac{1}{3}H^{2}.  \label{enzeeq15}
\end{equation}%
On the other hand, 
\begin{equation}
\begin{split}
\alpha (N+\frac{1}{3}\alpha ^{-1}HN^{\perp })\theta & =\alpha N\theta -\frac{%
1}{3}H^{2}\ \ (\text{by}\ (\ref{iden1})) \\
& =N^{\perp }\alpha -2\alpha ^{2}-\frac{1}{3}H^{2}\ \ (\text{by}\ (\ref%
{iden2})) \\
& =-N^{\perp }(\alpha )-\alpha ^{2}\ \ (\text{by}\ (\ref{enzeeq15})).
\end{split}%
\end{equation}%
This completes the proof.
\end{proof}

Later we will see that, in order to get a quasi-linear system, we need an
extra equation including the term $N^{\perp}H$, or $N^{\perp}N^{\perp}\theta$%
.

\begin{proposition}
\label{P-2} For an absolute $E_1$-minimizer, we have 
\begin{equation}
-\alpha (NH)-\alpha H^{2}+2\alpha ^{2}(N\theta )+\frac{2}{3}HN^{\perp
}N^{\perp }\theta +N^{\perp }N^{\perp }\alpha =-2\alpha (N^{\perp }\alpha ).
\label{emini2}
\end{equation}
\end{proposition}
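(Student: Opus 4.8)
The plan is to assemble (\ref{emini2}) purely algebraically from four relations already available: the Codazzi-like equation (\ref{codeq1}), the two structural identities (\ref{iden1}) and (\ref{iden2}), and the minimizer equation written in the form (\ref{enzeeq15}). The guiding principle is that each term on the left-hand side of (\ref{emini2}) can be rewritten so that, after cancellation, only $\alpha$, $N^{\perp}(\alpha)$, and the second-order quantity $N^{\perp}N^{\perp}\alpha$ remain; a single differentiated copy of the minimizer equation (\ref{enzeeq15}) then collapses this residue to $-2\alpha N^{\perp}(\alpha)$. Note that (\ref{codeq1}), (\ref{iden1}), and (\ref{iden2}) are general identities on the nonsingular domain, so the minimizer hypothesis enters only once, through (\ref{enzeeq15}), and in differentiated form.

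First I would differentiate (\ref{iden1}) along $N^{\perp}$ to obtain $N^{\perp}N^{\perp}\theta = -N^{\perp}(H)$, which converts the purely $\theta$-dependent second-order term $\tfrac{2}{3}H N^{\perp}N^{\perp}\theta$ into $-\tfrac{2}{3}H N^{\perp}(H)$. Next I would use (\ref{codeq1}) to eliminate the top-order term $-\alpha(NH)$, replacing it by $N^{\perp}N^{\perp}\alpha - 6\alpha N^{\perp}(\alpha) + 4\alpha^{3} + \alpha H^{2}$. This step simultaneously cancels the $-\alpha H^{2}$ already present in (\ref{emini2}) and produces a first copy of $N^{\perp}N^{\perp}\alpha$. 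After these two substitutions the left-hand side reads $2N^{\perp}N^{\perp}\alpha - 6\alpha N^{\perp}(\alpha) + 4\alpha^{3} + 2\alpha^{2}(N\theta) - \tfrac{2}{3}H N^{\perp}(H)$.

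To remove the surviving first-order $\theta$-term, I would solve (\ref{iden2}) for $N\theta = \alpha^{-1}N^{\perp}(\alpha) - 2\alpha$, so that $2\alpha^{2}(N\theta) = 2\alpha N^{\perp}(\alpha) - 4\alpha^{3}$; the cubic here exactly cancels the $4\alpha^{3}$ generated by the Codazzi substitution. At this stage the left-hand side has been reduced to $2N^{\perp}N^{\perp}\alpha - 4\alpha N^{\perp}(\alpha) - \tfrac{2}{3}H N^{\perp}(H)$, which contains no $\theta$ and no undifferentiated $H$.

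The final and decisive step is to differentiate the minimizer equation (\ref{enzeeq15}) along $N^{\perp}$, which yields $2N^{\perp}N^{\perp}\alpha = 2\alpha N^{\perp}(\alpha) + \tfrac{2}{3}H N^{\perp}(H)$, equivalently $\tfrac{2}{3}H N^{\perp}(H) = 2N^{\perp}N^{\perp}\alpha - 2\alpha N^{\perp}(\alpha)$. Substituting this expression for the remaining mixed term cancels the two copies of $N^{\perp}N^{\perp}\alpha$ against each other and leaves precisely $-2\alpha N^{\perp}(\alpha)$, the right-hand side of (\ref{emini2}). I do not expect a genuine obstacle here; the computation is bookkeeping once the five ingredients are lined up. The one subtlety to watch is differentiating along the correct vector field: (\ref{iden1}) and (\ref{enzeeq15}) must be differentiated along $N^{\perp}$ (not $N$), and one must keep $N^{\perp}(H)$ and $N(H)$ carefully distinguished, since it is the $N^{\perp}$-derivative of $H$ that the differentiated minimizer equation supplies and that matches the $\theta$-term coming from (\ref{iden1}).
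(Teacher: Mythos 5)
Your proof is correct; I checked the algebra and it closes exactly as you claim. But it is a genuinely different route from the paper's. The paper derives (\ref{emini2}) by applying $N^{\perp}$ to the first-order minimizer relation (\ref{emini1}) of Proposition \ref{P-1}, and the key step there is resolving the resulting mixed derivative $\alpha N^{\perp}N\theta$ via the commutator identity $\alpha\lbrack N^{\perp},N\rbrack=\alpha HN^{\perp}+(2\alpha^{2}-N^{\perp}\alpha)N$, which is the planar form of the frame bracket $[e_{1},V]=-\alpha He_{1}-2\alpha V$. You never touch (\ref{emini1}) and never compute a bracket: instead you eliminate $-\alpha(NH)$ with the Codazzi-like equation (\ref{codeq1}), convert $\frac{2}{3}HN^{\perp}N^{\perp}\theta$ into $-\frac{2}{3}HN^{\perp}(H)$ by differentiating (\ref{iden1}) along $N^{\perp}$, use (\ref{iden2}) to get $2\alpha^{2}(N\theta)=2\alpha N^{\perp}(\alpha)-4\alpha^{3}$, and close with the $N^{\perp}$-derivative of (\ref{enzeeq15}); I verified that after these substitutions the left side reduces to $2N^{\perp}N^{\perp}\alpha-4\alpha N^{\perp}(\alpha)-\frac{2}{3}HN^{\perp}(H)$ and then collapses to $-2\alpha N^{\perp}(\alpha)$ as required. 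The roles of the ingredients are swapped between the two arguments: in the paper the Codazzi equation is deliberately kept out of the proof of Proposition \ref{P-2} and enters only later as the third row of the system (\ref{enzeeq2}), with the frame bracket supplying the geometric input, whereas in your proof (\ref{codeq1}) itself carries that input --- consistent, since the Codazzi equation encodes the same structure equations. What your route buys: it is shorter, avoids the commutator computation, and makes explicit that the fourth equation of (\ref{enzeeq2}) is a differential-algebraic consequence of the other three together with (\ref{iden1}), an observation about the redundancy of the system that the paper does not state. What the paper's route buys: it exhibits (\ref{emini2}) as the natural $N^{\perp}$-prolongation of (\ref{emini1}), which is precisely the mechanism by which the authors enlarge the variable set to $(\theta,\alpha,H,-N^{\perp}\alpha)$ and close the non-quasi-linear second-order equation into a quasi-linear first-order system. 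Two minor points on your write-up: your division by $\alpha$ when solving (\ref{iden2}) for $N\theta$ is harmless here since $\alpha=\mp 1/D\neq 0$ on the nonsingular domain (and can be avoided by multiplying (\ref{iden2}) by $2\alpha$ instead), and your caution about differentiating along $N^{\perp}$ rather than $N$ is exactly the right thing to flag, since (\ref{codeq1}) supplies $N(H)$ while the differentiated (\ref{enzeeq15}) supplies $N^{\perp}(H)$, and the two must not be conflated.
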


\begin{proof}
We take $N^{\perp }$-derivative of (\ref{emini1}) to get 
\begin{equation}
\left( (N^{\perp }\alpha )N+\alpha N^{\perp }N+\frac{1}{3}(N^{\perp
}H)N^{\perp }+\frac{1}{3}HN^{\perp }N^{\perp }\right) \theta +N^{\perp
}N^{\perp }\alpha =-2\alpha (N^{\perp }\alpha ),  \label{demini1}
\end{equation}%
where 
\begin{equation}
\frac{1}{3}(N^{\perp }H)N^{\perp }\theta =\frac{1}{3}(-N^{\perp }N^{\perp
}\theta )(-H)=\frac{1}{3}HN^{\perp }N^{\perp }\theta .  \label{demini2}
\end{equation}%
Substituting (\ref{demini2}) into (\ref{demini1}), we get 
\begin{equation}
(N^{\perp }\alpha )N\theta +\alpha N^{\perp }N\theta +\frac{2}{3}HN^{\perp
}N^{\perp }\theta +N^{\perp }N^{\perp }\alpha =-2\alpha (N^{\perp }\alpha ),
\label{demini3}
\end{equation}%
where 
\begin{equation}
N^{\perp }N\theta =NN^{\perp }\theta +[N^{\perp },N]\theta =-NH+[N^{\perp
},N]\theta .  \label{demini4}
\end{equation}%
On the other hand, writing down the formula (see \cite[(3.42)]{CYZ18}) 
\begin{equation*}
\lbrack e_{1},V]=-\alpha He_{1}-2\alpha V,\ \ \text{where}\ V=\alpha e_{2}+T,
\end{equation*}%
in the domain of $u$, we get 
\begin{equation*}
\lbrack -N^{\perp },-\alpha N]=\alpha HN^{\perp }+2\alpha ^{2}N,
\end{equation*}%
that is, 
\begin{equation}
\alpha \lbrack N^{\perp },N]=\alpha HN^{\perp }+(2\alpha ^{2}-N^{\perp
}\alpha )N.  \label{demini5}
\end{equation}%
Substituting (\ref{demini5}) into (\ref{demini4}) and (\ref{demini4}) into (%
\ref{demini3}) respectively, we obtain (\ref{emini2}). This completes the
proof.
\end{proof}

We take all equations (\ref{codeq1}), (\ref{iden2}), (\ref{emini1}) and (\ref%
{emini2}) together to obtain a system as follows 
\begin{equation}
\left( 
\begin{array}{cccc}
\alpha (N+\frac{1}{3}\alpha ^{-1}HN^{\perp }) & N^{\perp } & 0 & 0 \\ 
-\alpha N & N^{\perp } & 0 & 0 \\ 
0 & 6\alpha N^{\perp } & -\alpha N & N^{\perp } \\ 
2\alpha ^{2}N & 2\alpha N^{\perp } & -\alpha N-\frac{2}{3}HN^{\perp } & 
-N^{\perp }%
\end{array}%
\right) \left( 
\begin{array}{c}
\theta \\ 
\alpha \\ 
H \\ 
-N^{\perp }\alpha%
\end{array}%
\right) =\left( 
\begin{array}{c}
-\alpha ^{2} \\ 
2\alpha ^{2} \\ 
4\alpha ^{3}+\alpha H^{2} \\ 
\alpha H^{2}%
\end{array}%
\right)  \label{enzeeq2}
\end{equation}%
Notice that (\ref{enzeeq2}) only holds on the region where $\alpha \neq 0$.
In terms of polar coordinates $(r,\phi )$, we have (see also (\ref{fifuda})) 
\begin{equation}
\begin{split}
N^{\perp }& =\sin {(\theta -\phi )}\partial _{r}-\frac{\cos {(\theta -\phi )}%
}{r}\partial _{\phi } \\
N& =\cos {(\theta -\phi )}\partial _{r}+\frac{\sin {(\theta -\phi )}}{r}%
\partial _{\phi }.
\end{split}
\label{fifuda}
\end{equation}%
Writing $U$ as the column vector 
\begin{equation}  \label{U}
U=\left( 
\begin{array}{c}
\theta \\ 
\alpha \\ 
H \\ 
-N^{\perp }\alpha%
\end{array}%
\right) ,
\end{equation}%
and $s=\sin {(\theta -\phi )},\ c=\cos {(\theta -\phi )}$, then (\ref%
{enzeeq2}) reads as

\begin{equation}  \label{enzeeq3}
A(r,\phi,U)\frac{\partial U}{\partial r}+B(r,\phi,U)\frac{\partial U}{%
\partial \phi}+C(r,\phi,U)U=0,
\end{equation}
where 
\begin{equation}  \label{Aex}
A(r,\phi,U)=\left(%
\begin{array}{cccc}
\frac{1}{3}sH+c\alpha & s & 0 & 0 \\ 
-c\alpha & s & 0 & 0 \\ 
0 & 6s\alpha & -c\alpha & s \\ 
2c\alpha^{2} & 2s\alpha & -\frac{2}{3}sH-c\alpha & -s%
\end{array}%
\right)
\end{equation}
\begin{equation}  \label{Bex}
B(r,\phi,U)=\left(%
\begin{array}{cccc}
-\frac{1}{3}\frac{c}{r}H+\frac{s}{r}\alpha & -\frac{c}{r} & 0 & 0 \\ 
-\frac{s}{r}\alpha & -\frac{c}{r} & 0 & 0 \\ 
0 & -6\frac{c}{r}\alpha & -\frac{s}{r}\alpha & -\frac{c}{r} \\ 
2\frac{s}{r}\alpha^{2} & -2\frac{c}{r}\alpha & \frac{2}{3}\frac{c}{r}H-\frac{%
s}{r}\alpha & \frac{c}{r}%
\end{array}%
\right),
\end{equation}

\begin{equation}
C(r,\phi ,U)=(-1)\left( 
\begin{array}{cccc}
0 & -\alpha & 0 & 0 \\ 
0 & 2\alpha & 0 & 0 \\ 
0 & 4\alpha ^{2} & \alpha H & 0 \\ 
0 & 0 & \alpha H & 0%
\end{array}%
\right) .  \label{Cex}
\end{equation}%
Equation (\ref{enzeeq3}) is a quasi-linear equation of first order. The
Cauchy problem for (\ref{enzeeq3}) prescribes the value of $U$ on a curve $%
r=r(\phi )$ in the $\phi r$-plane 
\begin{equation}
U=U(r(\phi ),\phi )=f(\phi ).  \label{Inval}
\end{equation}%
The curve is characteristic \cite[p.47]{John82} if 
\begin{equation}
\det {(Ad\phi -Bdr)}=0  \label{chaeq}
\end{equation}%
for each point $(r(\phi ),\phi )$ on the curve. It depends on the solution $%
U $. On the region where $\det {A(r,\phi ,U)}\neq 0$, the quasi-linear
equation (\ref{enzeeq3}) becomes 
\begin{equation}
U_{r}+a(r,\phi ,U)U_{\phi }+b(r,\phi ,U)=0,  \label{enzeeq4}
\end{equation}%
where $a=A^{-1}B$ and $b=A^{-1}CU$. On this region, we obtain immediately
from the characteristic differential equation (\ref{chaeq}) that the curve $%
r=\mathbf{c}>0$, where $\frac{dr}{d\phi}=0$, is always non-characteristic.
The initial-value problem to be studied here is to prescribe the values of $U
$ on the curve $r=\mathbf{c}>0$, which is assumed to be non-characteristic: $%
U(\mathbf{c},\phi )=f(\phi ) $.

\subsection{The smooth category}

In this subsection, we show the uniqueness theorem for the initial-value
problem.

\begin{theorem}
\label{mainthm1} Let $U$ be a solutions to (\ref{enzeeq3}) or to (\ref%
{enzeeq4}) and the curve $r=\mathbf{c}$ is lying in the region where $\det {%
A(r,\phi ,U)}\neq 0$. If $V$ is another solution to (\ref{enzeeq3}) or to (%
\ref{enzeeq4}) having the same initial values as $U$, on the curve $r=%
\mathbf{c}>0$. Then $U=V$ on a neighborhood of the initial curve $r=\mathbf{c%
}$.
\end{theorem}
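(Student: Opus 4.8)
The plan is to prove local uniqueness by an energy estimate applied to the difference of the two solutions, exploiting that on the region $\det A(r,\phi,U)\neq 0$ the system has been reduced to the quasi-linear evolutionary form (\ref{enzeeq4}), namely $U_{r}+a(r,\phi ,U)U_{\phi }+b(r,\phi ,U)=0$ with $a=A^{-1}B$, $b=A^{-1}CU$, which is hyperbolic in $r$ (the characteristic directions $N^{\perp }$ and $N+\tfrac13\alpha^{-1}HN^{\perp }$ being real, which is the content underlying Proposition \ref{A-P-1}). First I would localize. Since $\det A\neq 0$ holds on the initial curve $r=\mathbf{c}$ and $V=U$ there, continuity gives a slab $\{\mathbf{c}-\epsilon<r<\mathbf{c}+\epsilon\}$ on which both $U$ and $V$ keep $\det A\neq 0$ and take values in a fixed compact set; on this slab the smooth functions $a$, $b$, together with $U$, $V$ and their $\phi$-derivatives, are all bounded.

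Next I would set $W=U-V$ and subtract the two copies of (\ref{enzeeq4}). Using the telescoping identities $a(r,\phi,U)U_\phi-a(r,\phi,V)V_\phi=a(r,\phi,U)W_\phi+[a(r,\phi,U)-a(r,\phi,V)]V_\phi$ and the analogue for $b$, together with the fundamental theorem of calculus in the $U$-slot (smoothness of $a,b$), the $U$-differences become bounded matrices times $W$. This converts the problem into a \emph{linear} first-order system
\[
W_{r}+a(r,\phi ,U)\,W_{\phi }+P(r,\phi )\,W=0,
\]
with $W(\mathbf{c},\phi )=0$, where $P$ is a bounded matrix built from $U$, $V$, $V_{\phi }$. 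The principal coefficient is the same $a(r,\phi,U)$ as before, so its hyperbolicity yields a smooth symmetric positive-definite symmetrizer $S(r,\phi )$ with $c_{0}I\le S\le c_{1}I$ and $Sa$ symmetric.

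Then I would run the standard energy argument. Define $\mathcal{E}(r)=\int W^{\top }S\,W\,d\phi $, with the $\phi $-integration taken over the full angular circle, so that periodicity in $\phi $ kills the lateral boundary terms (or, to stay strictly local, over the $\phi $-section of a determinacy region bounded by the steepest characteristics issuing from $r=\mathbf{c}$, along which the flux has the favorable sign). Differentiating in $r$, substituting $W_{r}=-a\,W_{\phi }-P\,W$, integrating the term $W^{\top }Sa\,W_{\phi }$ by parts in $\phi $ and using the symmetry of $Sa$, every resulting term is controlled by $\mathcal{E}(r)$ via the bounds on $S_{r}$, $\partial_{\phi }(Sa)$ and $P$. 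This gives $|\mathcal{E}'(r)|\le C\,\mathcal{E}(r)$ on the slab; since $\mathcal{E}(\mathbf{c})=0$ and $\mathcal{E}\ge 0$, Gronwall's inequality (applied on both sides of $r=\mathbf{c}$) forces $\mathcal{E}\equiv 0$, hence $W\equiv 0$ and $U=V$ near the initial curve.

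The step I expect to be the main obstacle is producing the symmetrizer $S$, i.e. verifying that the full $4\times 4$ system (\ref{enzeeq3}) is genuinely symmetrizably hyperbolic rather than just that its scalar reduction is. The matrix $a=A^{-1}B$ carries the two speeds coming from $N^{\perp }$ and $N+\tfrac13\alpha^{-1}HN^{\perp }$ each with multiplicity two, so the system is hyperbolic but not \emph{strictly} hyperbolic, and one must check that $a$ is diagonalizable over $\mathbb{R}$ (constant multiplicity with full eigenspaces) to obtain a smooth $S$. A convenient way is to use the block structure of (\ref{Aex}) and (\ref{Bex}): rows one and two, namely equations (\ref{emini1}) and (\ref{iden2}), close up in the derivatives of $\theta $ and $\alpha $ alone, so both $A$ and $B$ are block lower-triangular in the splitting $(\theta ,\alpha )\oplus (H,-N^{\perp }\alpha )$, whence $a$ is block lower-triangular and its spectrum and eigenspaces can be read off from the two $2\times 2$ diagonal blocks. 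This reduces the diagonalizability check to those blocks; once it is in hand, the symmetrizer exists and the energy estimate followed by Gronwall is routine.
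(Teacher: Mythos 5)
Your proposal follows the paper's own skeleton exactly up to the last step: like the paper, you localize around $r=\mathbf{c}$ using continuity of $\det A$, set $Z=V-U$, subtract the two copies of (\ref{enzeeq4}), and use the mean value theorem to rewrite $a(V)-a(U)$ and $b(V)-b(U)$ as bounded matrices times $Z$, arriving at a linear system $Z_{r}+a(r,\phi,U)Z_{\phi}+P(r,\phi)Z=0$ with zero initial data. The divergence is only in how uniqueness for this linear system is then obtained. The paper verifies hyperbolicity in John's sense by exhibiting an explicit complete set of eigenvectors of $a=A^{-1}B$ (Propositions \ref{P-3} and \ref{P-4}, with a case analysis according to the vanishing of $\alpha$ and $H$) and then invokes the uniqueness theorem of \cite[Section 5 in Chapter 2]{John82} as a black box, whereas you replace the citation by a self-contained symmetrizer-plus-Gronwall energy argument. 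This is a legitimate alternative: given a smooth invertible eigenvector matrix $Q$, the choice $S=(Q^{-1})^{t}Q^{-1}$ symmetrizes $a$, and your handling of the lateral terms (periodicity in $\phi$, or a domain of determinacy bounded by the extreme characteristics) is standard and correct. What the paper's route buys is avoiding the symmetrizer altogether; what yours buys is independence from the quoted theorem.

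One assertion in your final paragraph is inaccurate as stated, and it sits precisely at the crux you yourself flagged. For a block lower-triangular matrix the spectrum is indeed the union of the spectra of the diagonal blocks, but the eigenspaces cannot simply be \emph{read off} from those blocks: when an eigenvalue $\lambda$ of the upper block also belongs to the lower block, extending an eigenvector $u$ requires solving $(\lambda I-a_{22})w=a_{21}u$, which can fail, and diagonalizable diagonal blocks together with nonzero coupling can produce a Jordan block. Moreover the multiplicity is \emph{not} constant on the region $\det A\neq 0$: by Proposition \ref{P-3} one has $\lambda_{1}=\lambda_{3}$ exactly where $H=0$, and all four eigenvalues coalesce where $\alpha=0$ (also, generically there are three distinct speeds $\lambda_{1}$, $\lambda_{2}=\lambda_{4}$, $\lambda_{3}$, not two double speeds as you assert). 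What rescues the construction — and what your block reduction must be supplemented by — is the observation, visible in the explicit matrix (\ref{eoAinB}), that every off-diagonal entry $\lambda_{21},\lambda_{31},\lambda_{41},\lambda_{43}$ is divisible by $\alpha H$, so the coupling vanishes exactly at the eigenvalue crossings; consequently the eigenvector fields of Proposition \ref{P-4}, namely $\xi^{1}=\left(1,-\tfrac{1}{6}H,-5\alpha,\tfrac{11}{6}\alpha H\right)^{t}$, the coordinate vectors $\xi^{2},\xi^{4}$, and $\xi^{3}=\left(0,0,1,-\tfrac{1}{3}H\right)^{t}$, are polynomial in $(\alpha,H)$, hence globally smooth and everywhere linearly independent on the region $\det A\neq 0$, and the smooth symmetrizer exists there despite the variable multiplicity. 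With that verification inserted, your energy proof closes and yields the same conclusion as the paper.
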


\begin{proof}
Notice that 
\begin{equation}
\det {A(r,\phi ,U)}=s^{2}(\frac{1}{3}sH+2c\alpha )(\frac{2}{3}sH+2c\alpha ).
\label{deoA}
\end{equation}%
Since $\det {A(r,\phi ,U)}\neq 0$, and $V$ and $U$ have the same initial
values on the curve $r=\mathbf{c}$, we conclude that $\det {A(r,\phi ,V)}%
\neq 0$ on a neighborhood of the initial curve $r=\mathbf{c}$. We consider
the vector function 
\begin{equation*}
Z(r,\phi ):=V-U.
\end{equation*}%
Then $Z$ has the initial values zero on the curve $r=\mathbf{c}$.

Subtracting the differential equation (\ref{enzeeq4}) for $U$ from that for $%
V$ we have, omitting explicit reference to $r$ and $\phi $,

\begin{equation}  \label{enzeeq5}
Z_{r}+a(U)Z_{\phi}+[a(V)-a(U)]V_{\phi}+b(V)-b(U)=0.
\end{equation}

On the other hand, we may apply the mean value theorem and have 
\begin{equation*}
a(V)-a(U)=h(V,U)Z;\ \ \ b(V)-b(U)=k(V,U)Z,
\end{equation*}%
for some $h$ and $k$ which are both smooth. We now consider $U,V,V_{\phi }$
as known expressions in $r,\phi $ and substitute these expressions in $h$
and $k$ as well as in $a(U)$; thereby (\ref{enzeeq5}) becomes a linear
differential equation for $Z$ of the form 
\begin{equation}
Z_{r}+a(r,\phi )Z_{\phi }+b(r,\phi )Z=0,  \label{enzeeq6}
\end{equation}%
with initial values zero. Here $a(r,\phi )=a(U)=a(r,\phi ,U)$. In the next
subsection, we will show that the system (\ref{enzeeq6}) is hyperbolic (see 
\cite[p.48]{John82}) in the sense that there exists a complete set of real
eigenvectors $\xi ^{1},\xi ^{2},\xi ^{3},\xi ^{4}$ of $a(r,\phi )$ by
Proposition \ref{P-4}, such that 
\begin{equation*}
a(r,\phi )\xi ^{k}=\lambda _{k}\xi ^{k},
\end{equation*}%
where the $\xi ^{k}$ are linearly independent and depend regularly on $r$
and $\phi $. The uniqueness theorem for a first-order system of linear
equations (\cite[Section 5 in Chapter 2]{John82}) asserts now that $Z$ is
identically zero, and thus $U=V$ around the curve $r=\mathbf{c}$.
\end{proof}

\begin{proof}
\textbf{(of Theorem \ref{mainthm2})} From (\ref{fifuda}), we see that the
same value for $\theta $ via Theorem \ref{mainthm1} implies that the two
graphs $u=u(r,\phi )$ and $v=v(r,\phi )$ have the same first fundamental
form, as well as the characteristic vectors $N^{\perp }$, (or $e_{1}$).
Together with the same value for $\alpha $ and $H$ by Theorem \ref{mainthm1}%
, we conclude, by the fundamental theorem for surfaces in $H_{1}$ (for the
details, see \textbf{Theorem 1.3} in \cite{CL}, also cf. \cite[Theorem H]%
{CHMY12}), that $u=v$ on a neighborhood of the initial curve $r=\mathbf{c}$.
\end{proof}

\subsection{Hyperbolic system}

In this subsection we are going to show that the system (\ref{enzeeq6}) is
hyperbolic in the sense of \cite[p.48]{John82} so that Theorem \ref{mainthm1}%
, and thus Theorem \ref{mainthm2}, hold. Let 
\begin{equation}
\sigma =\frac{1}{3}sH+2c\alpha ,\ \ \ \eta =\frac{2}{3}sH+2c\alpha ,
\end{equation}%
that is, from (\ref{deoA}), $\det {A}(r,\phi ,U)=s^{2}\sigma \eta $. Then,
applying a sequence of elementary row operations, we obtain that the inverse
of $A(r,\phi ,U)$ is 
\begin{equation}
A^{-1}(r,\phi ,U)=\left( 
\begin{array}{cccc}
\frac{1}{\sigma } & -\frac{1}{\sigma } & 0 & 0 \\ 
\frac{c\alpha }{s\sigma } & \frac{\sigma -c\alpha }{s\sigma } & 0 & 0 \\ 
\frac{10c\alpha ^{2}}{\sigma \eta } & \frac{8\sigma \alpha -10c\alpha ^{2}}{%
\sigma \eta } & -\frac{1}{\eta } & -\frac{1}{\eta } \\ 
\frac{-6\eta c\alpha ^{2}+10c^{2}\alpha ^{3}}{s\sigma \eta } & \frac{8\sigma
c\alpha ^{2}-10c^{2}\alpha ^{3}-6\sigma \eta \alpha +6\eta c\alpha ^{2}}{%
s\sigma \eta } & \frac{\eta -c\alpha }{s\eta } & -\frac{c\alpha }{s\eta }%
\end{array}%
\right) ,  \label{eoAin}
\end{equation}%
and thus 
\begin{equation}
a(r,\phi )=A^{-1}B(r,\phi ,U)=\left( 
\begin{array}{cccc}
\frac{-\frac{1}{3}cH+2s\alpha }{r\sigma } & 0 & 0 & 0 \\ 
\frac{-\frac{1}{3}\alpha H}{sr\sigma } & \frac{-c}{sr} & 0 & 0 \\ 
\frac{-\frac{10}{3}H\alpha ^{2}}{r\sigma \eta } & 0 & \frac{-\frac{2}{3}%
cH+2s\alpha }{r\eta } & 0 \\ 
\frac{\frac{4}{3}s\alpha ^{2}H^{2}+\frac{2}{3}c\alpha ^{3}H}{sr\sigma \eta }
& 0 & \frac{-\frac{2}{3}\alpha H}{sr\eta } & \frac{-c}{sr}%
\end{array}%
\right) ,  \label{eoAinB}
\end{equation}%
with eigenvalues 
\begin{equation}
\lambda _{1}=\frac{-\frac{1}{3}cH+2s\alpha }{r\sigma },\ \ \ \lambda
_{2}=\lambda _{4}=\frac{-c}{sr},\ \ \ \lambda _{3}=\frac{-\frac{2}{3}%
cH+2s\alpha }{r\eta }.
\end{equation}

\begin{proposition}
\label{P-3} We have that $\lambda_{1}\neq 0\ \ \lambda_{3}\neq 0$. And

\begin{enumerate}
\item If $\alpha H\neq 0$, then $\lambda_{1}\neq\lambda_{2},\
\lambda_{2}\neq\lambda_{3},\ \lambda_{1}\neq\lambda_{3}$.

\item If $\alpha\neq 0,\ H=0$, then $\lambda_{1}\neq\lambda_{2},\
\lambda_{2}\neq\lambda_{3},\ \lambda_{1}=\lambda_{3}$.

\item If $\alpha=0,\ H\neq 0$, then $\lambda_{1}=\lambda_{2}=\lambda_{3}=%
\lambda_{4}$.
\end{enumerate}
\end{proposition}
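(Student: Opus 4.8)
The plan is to argue entirely from the explicit lower-triangular form of $a(r,\phi)=A^{-1}B$ in (\ref{eoAinB}), reading $\lambda_1,\ \lambda_2=\lambda_4,\ \lambda_3$ off its diagonal, and to use throughout the standing hypothesis $\det A(r,\phi,U)=s^2\sigma\eta\neq 0$, which forces $s\neq 0$, $\sigma\neq 0$ and $\eta\neq 0$. I would reduce the whole coincidence pattern in (1)--(3) to one short computation of the three pairwise differences $\lambda_1-\lambda_2$, $\lambda_3-\lambda_2$ and $\lambda_1-\lambda_3$, writing $p_1=2s\alpha-\tfrac13 cH$ and $p_3=2s\alpha-\tfrac23 cH$ for the numerators of $\lambda_1$ and $\lambda_3$.

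The engine of the proof is the set of identities
\begin{equation*}
c\sigma+sp_1=2\alpha,\qquad c\eta+sp_3=2\alpha,\qquad p_1\eta-p_3\sigma=\tfrac{2}{3}\alpha H,
\end{equation*}
each obtained by expanding and using $s^2+c^2=1$, the point being that the $H$ cross-terms cancel. Dividing by $rs\sigma$, $rs\eta$ and $r\sigma\eta$ respectively gives
\begin{equation*}
\lambda_1-\lambda_2=\frac{2\alpha}{rs\sigma},\qquad \lambda_3-\lambda_2=\frac{2\alpha}{rs\eta},\qquad \lambda_1-\lambda_3=\frac{2\alpha H}{3r\sigma\eta}.
\end{equation*}
Since $r,s,\sigma,\eta$ are all nonzero, the first two differences vanish precisely when $\alpha=0$ and the third precisely when $\alpha H=0$, and the three asserted cases drop out immediately: for $\alpha H\neq 0$ all three are nonzero; for $\alpha\neq 0,\ H=0$ the first two are nonzero while $\lambda_1-\lambda_3=0$; and for $\alpha=0$ all three vanish, so $\lambda_1=\lambda_2=\lambda_3=\lambda_4$.

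What remains, and what I expect to be the genuine difficulty, are the nonvanishing assertions $\lambda_1\neq 0$, $\lambda_3\neq 0$, i.e.\ $p_1\neq 0$ and $p_3\neq 0$. These are immediate in two of the regimes: when $H=0$ one has $p_1=p_3=2s\alpha\neq 0$, and when $\alpha=0$ a direct substitution gives $\lambda_1=\lambda_3=-c/(rs)$. The delicate regime is $\alpha H\neq 0$, where $p_1=0$ is the single linear relation $6s\alpha=cH$ among the pointwise data $\alpha,H,\theta$; this relation is compatible with $\det A=s^2\sigma\eta\neq 0$, so the nonvanishing cannot be read off from the standing hypothesis alone. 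Here I would bring in the extra structure carried by an actual $E_1$-minimizer---in particular the constraint $2N^\perp\alpha=\alpha^2+\tfrac13 H^2$ from (\ref{enzeeq15}) and the normalization of $\theta$ relative to the polar angle along the non-characteristic initial curve $r=\mathbf{c}$---and check that $p_1$ and $p_3$ cannot vanish on the region of interest. Pinning down exactly which of these inputs rules out the coincidence $6s\alpha=cH$ is the crux of the argument.
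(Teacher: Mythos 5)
Your computation of the pairwise differences is correct and coincides exactly with the paper's: from the identities $c\sigma+s\tilde{\sigma}=2\alpha$, $c\eta+s\tilde{\eta}=2\alpha$ and $\tilde{\sigma}\eta-\tilde{\eta}\sigma=\tfrac{2}{3}\alpha H$ (writing $\tilde{\sigma}=-\tfrac13 cH+2s\alpha$, $\tilde{\eta}=-\tfrac23 cH+2s\alpha$ for your $p_1,p_3$) one gets $\lambda_1-\lambda_2=\frac{2\alpha}{sr\sigma}$, $\lambda_3-\lambda_2=\frac{2\alpha}{sr\eta}$, $\lambda_1-\lambda_3=\frac{2\alpha H}{3r\sigma\eta}$, and since $\det A=s^2\sigma\eta\neq 0$ forces $s,\sigma,\eta\neq 0$, cases (1)--(3) follow. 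Up to this point you have reproduced the paper's proof.

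The genuine gap is the unconditional claim $\lambda_1\neq 0$, $\lambda_3\neq 0$, which you explicitly leave open in the regime $\alpha H\neq 0$ and propose to attack via the minimizer constraint (\ref{enzeeq15}) -- a route you do not carry out and which is not what the paper does. The paper's entire argument is a quarter-turn symmetry: since replacing $\phi$ by $\phi+\tfrac{\pi}{2}$ (with $U$ fixed) sends $s\mapsto -c$ and $c\mapsto s$, one has $\tilde{\sigma}(r,\phi+\tfrac{\pi}{2},U)=-\sigma(r,\phi,U)$ and $\tilde{\eta}(r,\phi+\tfrac{\pi}{2},U)=-\eta(r,\phi,U)$, so the numerators of $\lambda_1,\lambda_3$ are nonvanishing wherever $\det A(r,\phi,U)\neq 0$ -- with this region understood in the full $(r,\phi,U)$-space, so that the quarter-turn stays inside it. Your observation that the single relation $6s\alpha=cH$ (i.e.\ $\tilde{\sigma}=0$) is pointwise compatible with $\det A\neq 0$ is in fact correct, which is precisely why the paper's conclusion rests on this region-level reading rather than on any additional structure of $E_1$-minimizers; nothing from the PDE enters. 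Note also that your ``immediate'' case $\alpha=0$ suffers from the same pointwise issue you flagged: there $\lambda_1=\lambda_3=-c/(rs)$, which vanishes when $c=0$, again compatible with $\det A=\tfrac{2}{9}s^4H^2\neq 0$; it is the rotation identity, not direct substitution, that covers this case in the paper. So while your reduction of (1)--(3) is complete and correct, the nonvanishing statement remains unproved in your proposal, and the missing idea is the one-line symmetry $\tilde{\sigma}\leftrightarrow-\sigma$, $\tilde{\eta}\leftrightarrow-\eta$ under $\phi\mapsto\phi+\tfrac{\pi}{2}$.
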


\begin{proof}
Let 
\begin{equation}
\tilde{\sigma}=-\frac{1}{3}cH+2s\alpha,\ \ \ \tilde{\eta}=-\frac{2}{3}%
cH+2s\alpha,
\end{equation}
it is easy to see that 
\begin{equation}
\tilde{\sigma}(r,\phi+\frac{\pi}{2},U)=-\sigma(r,\phi,U),\ \ \ \tilde{\eta}%
(r,\phi+\frac{\pi}{2},U)=-\eta(r,\phi,U),
\end{equation}
thus, on the region where $\det{A(r,\phi,U)}\neq 0$, we have that $%
\lambda_{1}\neq 0$ and $\lambda_{3}\neq 0$. On the other hand, a
straightforward computation shows that 
\begin{equation}
\lambda_{1}-\lambda_{2}=\frac{2\alpha}{sr\sigma},\ \ \
\lambda_{3}-\lambda_{2}=\frac{2\alpha}{sr\eta},\ \ \ \lambda_{1}-\lambda_{3}=%
\frac{\frac{2}{3}\alpha H}{r\sigma\eta}.
\end{equation}
This completes the proof of the proposition.
\end{proof}

We have 
%

\begin{proposition}
\label{P-4} The eigenspaces with respect to the eigenvalues $\lambda
_{1},\lambda _{2}(=\lambda _{4}),\lambda _{3}$ respectively are spanned by
the vectors 
\begin{equation*}
\xi ^{1}=\left( 
\begin{array}{c}
1 \\ 
-\frac{1}{6}H \\ 
-5\alpha \\ 
\frac{11}{6}\alpha H%
\end{array}%
\right) ,\ \ \xi ^{2}=\left( 
\begin{array}{c}
0 \\ 
1 \\ 
0 \\ 
0%
\end{array}%
\right) ,\ \ \xi ^{4}=\left( 
\begin{array}{c}
0 \\ 
0 \\ 
0 \\ 
1%
\end{array}%
\right) ,\ \ \xi ^{3}=\left( 
\begin{array}{c}
0 \\ 
0 \\ 
1 \\ 
-\frac{1}{3}H%
\end{array}%
\right) .
\end{equation*}%
It is clear that the $\xi ^{k},k=1\cdots 4$ are linearly independent and
depend regularly on $r$ and $\phi $. This means that the system (\ref%
{enzeeq6}) is hyperbolic.
\end{proposition}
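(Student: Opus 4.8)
The plan is to prove the proposition by a direct eigenvector computation rather than by solving the eigenvalue equation from scratch. First I would record the structural observation that the matrix $a(r,\phi)=A^{-1}B$ displayed in (\ref{eoAinB}) is lower triangular; hence its spectrum consists exactly of the diagonal entries $\lambda_1,\lambda_2=\lambda_4,\lambda_3$ already listed, and it remains only to confirm the four relations $a\xi^k=\lambda_k\xi^k$ together with the linear independence of the $\xi^k$.

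Two of the four checks are immediate. Since $\xi^2$ and $\xi^4$ are the standard basis vectors $e_2$ and $e_4$, the products $a\xi^2$ and $a\xi^4$ are just the second and fourth columns of $a$, each of which has its only nonzero entry on the diagonal, equal to $-c/(sr)=\lambda_2=\lambda_4$; so $\xi^2,\xi^4$ are eigenvectors for $\lambda_2$. For $\xi^3=(0,0,1,-\tfrac13 H)^{T}$ only the third and fourth rows contribute: the third row reproduces $\lambda_3$ at once, and after clearing the denominators $s,r,\eta$ the fourth-row equation collapses to the clean identity $c\eta+s\tilde\eta=2\alpha$, which is immediate from $\eta=\tfrac23 sH+2c\alpha$, $\tilde\eta=-\tfrac23 cH+2s\alpha$ and $s^2+c^2=1$.

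The substantive work is the check $a\xi^1=\lambda_1\xi^1$ for $\xi^1=(1,-\tfrac16 H,-5\alpha,\tfrac{11}{6}\alpha H)^{T}$. The first component is automatic. Clearing denominators, the second component reduces to the companion identity $c\sigma+s\tilde\sigma=2\alpha$, and the third component to $\tilde\sigma\eta-\tilde\eta\sigma=\tfrac23\alpha H$; both are one-line consequences of the definitions of $\sigma,\eta,\tilde\sigma,\tilde\eta$ together with $s^2+c^2=1$. The fourth component is the genuinely laborious one, and this is where I expect the main obstacle to lie: after multiplying through by $6sr\sigma\eta$ it becomes a polynomial identity in $s,c,\alpha,H$ whose two sides I would expand, group the monomials into an $\alpha H$-part and an $\alpha^2$-part, and check that each part vanishes by a single application of $c^2+s^2=1$. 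I would stress that throughout these reductions the cleared identities are genuine polynomial identities---no division by $\alpha$ or $H$ is ever used---so the verification is valid at every point of the region $\det A\neq 0$ (where $s,\sigma,\eta$ are nonzero), including the loci where $\alpha$ or $H$ happens to vanish.

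Finally, linear independence and regularity are formal. The matrix whose columns are $\xi^1,\xi^2,\xi^3,\xi^4$ is lower triangular with $1$'s on the diagonal, so its determinant is $1\neq 0$ at every point, and its entries are polynomials in the smooth functions $\alpha$ and $H$, hence depend regularly on $(r,\phi)$. Comparing with the three cases of Proposition \ref{P-3} shows that the geometric multiplicities always add up to four, so the listed vectors do span the respective eigenspaces; in particular $a(r,\phi)$ admits a complete set of real eigenvectors depending regularly on $(r,\phi)$, which is precisely the hyperbolicity of the system (\ref{enzeeq6}) in the sense of \cite[p.48]{John82}.
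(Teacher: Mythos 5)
Your proposal is correct, and I verified the key identities: $c\sigma+s\tilde{\sigma}=2\alpha$, $c\eta+s\tilde{\eta}=2\alpha$, $\tilde{\sigma}\eta-\tilde{\eta}\sigma=\frac{2}{3}\alpha H$ all hold, and the fourth-row check for $\xi^{1}$ reduces (after clearing $sr\sigma\eta$) to $\frac{4}{3}s\alpha^{2}H^{2}+\frac{2}{3}c\alpha^{3}H+\frac{10}{3}\alpha^{2}H\sigma=\frac{11}{3}\alpha^{2}H\eta$, which both sides expand to $\frac{22}{9}s\alpha^{2}H^{2}+\frac{22}{3}c\alpha^{3}H$. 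However, your route is genuinely different from the paper's. The paper \emph{derives} the eigenvectors: assuming $\alpha H\neq 0$ so that $\lambda_{1},\lambda_{2},\lambda_{3}$ are pairwise distinct (Proposition \ref{P-3}), it solves the lower-triangular system $a\xi=\lambda_{1}\xi$ component by component, obtaining $\xi^{1}$ with entries $\frac{\lambda_{21}}{\lambda_{1}-\lambda_{2}}$, $\frac{\lambda_{31}}{\lambda_{1}-\lambda_{3}}$, $\frac{\lambda_{41}}{\lambda_{1}-\lambda_{4}}+\frac{\lambda_{31}\lambda_{43}}{(\lambda_{1}-\lambda_{3})(\lambda_{1}-\lambda_{4})}$, and then handles the degenerate cases $\alpha\neq 0,H=0$ (where $\lambda_{1}=\lambda_{3}$ and the paired eigenspaces are recomputed) and $\alpha=0,H\neq 0$ (where $a(r,\phi)$ becomes a scalar multiple of the identity) by separate computations. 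You instead \emph{verify} the stated vectors by division-free polynomial identities, which buys uniformity: since no division by $\alpha$ or $H$ is ever performed, one computation is valid on all of $\{\det A\neq 0\}$, including the loci $\alpha=0$ or $H=0$, and the spanning claim then follows from your multiplicity count against the three cases of Proposition \ref{P-3} (which does reproduce the paper's case-(2) eigenspaces, as $\xi^{1}|_{H=0}=(1,0,-5\alpha,0)^{t}$ and $\xi^{3}|_{H=0}=(0,0,1,0)^{t}$). The trade-off is that the paper's derivation explains where the vectors come from, while your verification is shorter and avoids the case split in the eigenvector computation; one cosmetic slip is your description of the final bookkeeping --- the residual monomials in the fourth-component identity are of the shapes $s\alpha^{2}H^{2}$ and $c\alpha^{3}H$ rather than an ``$\alpha H$-part and an $\alpha^{2}$-part'' --- but this does not affect the validity of the argument.
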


\begin{proof}
We in addition put 
\begin{equation}
\lambda _{21}=\frac{-\frac{1}{3}\alpha H}{sr\sigma },\ \ \ \lambda _{31}=%
\frac{-\frac{10}{3}H\alpha ^{2}}{r\sigma \eta },\ \ \ \lambda _{41}=\frac{%
\frac{4}{3}s\alpha ^{2}H^{2}+\frac{2}{3}c\alpha ^{3}H}{sr\sigma \eta },\ \ \
\lambda _{43}=\frac{-\frac{2}{3}\alpha H}{sr\eta },
\end{equation}%
which are the other nonzero entries in $a(r,\phi )$ in (\ref{eoAinB}).
First, we suppose that $\alpha H\neq 0$. Assume that $%
\xi=(a_{1},a_{2},a_{3},a_{4})^{t}$ is an eigenvector of $a(r,\phi)$ with
respect to $\lambda_{1}$, then $a(r,\phi)\xi=\lambda_{1}\xi$ means that 
\begin{equation*}
\begin{split}
\lambda _{1}a_{1}& =\lambda _{1}a_{1} \\
\lambda _{21}a_{1}+\lambda _{2}a_{2}& =\lambda _{1}a_{2} \\
\lambda _{31}a_{1}+\lambda _{3}a_{3}& =\lambda _{1}a_{3} \\
\lambda _{41}a_{1}+\lambda _{43}a_{3}+\lambda _{4}a_{4}& =\lambda _{1}a_{4},
\end{split}%
\end{equation*}%
which implies that 
\begin{equation*}
\xi =a_{1}\left( 
\begin{array}{c}
1 \\ 
\frac{\lambda _{21}}{\lambda _{1}-\lambda _{2}} \\ 
\frac{\lambda _{31}}{\lambda _{1}-\lambda _{3}} \\ 
\frac{\lambda _{41}}{\lambda _{1}-\lambda _{4}}+\frac{\lambda _{31}\lambda
_{43}}{(\lambda _{1}-\lambda _{3})(\lambda _{1}-\lambda _{4})}%
\end{array}%
\right) ,
\end{equation*}%
for $a_{1}$ arbitrary. After a straightforward computation, and noticing
that $\alpha H\neq 0$, we have 
\begin{equation*}
\xi ^{1}=\left( 
\begin{array}{c}
1 \\ 
\frac{\lambda _{21}}{\lambda _{1}-\lambda _{2}} \\ 
\frac{\lambda _{31}}{\lambda _{1}-\lambda _{3}} \\ 
\frac{\lambda _{41}}{\lambda _{1}-\lambda _{4}}+\frac{\lambda _{31}\lambda
_{43}}{(\lambda _{1}-\lambda _{3})(\lambda _{1}-\lambda _{4})}%
\end{array}%
\right) =\left( 
\begin{array}{c}
1 \\ 
-\frac{1}{6}H \\ 
-5\alpha \\ 
\frac{11}{6}\alpha H%
\end{array}%
\right) .
\end{equation*}%
Similarly, since $\lambda _{1}\neq \lambda _{2},\ \lambda _{3}\neq \lambda
_{2}$ and $\lambda _{1}\neq \lambda _{3}$, it is easy to get that 
\begin{equation*}
\xi ^{2}=\left( 
\begin{array}{c}
0 \\ 
1 \\ 
0 \\ 
0%
\end{array}%
\right) ,\ \ \xi ^{4}=\left( 
\begin{array}{c}
0 \\ 
0 \\ 
0 \\ 
1%
\end{array}%
\right) ,\ \ \xi ^{3}=\left( 
\begin{array}{c}
0 \\ 
0 \\ 
1 \\ 
-\frac{1}{3}H%
\end{array}%
\right) .
\end{equation*}%
Next we assume that $\alpha\neq 0,\ H=0$, then $\lambda_{1}=\lambda_{3}$,
and a similar computation shows that the eigensapce with respect to $%
\lambda_{1} (\text{or}\ \lambda_{3})$ is spanned by the two vectors 
\begin{equation*}
\xi^{1}=\left(%
\begin{array}{c}
1 \\ 
0 \\ 
-5\alpha \\ 
0%
\end{array}%
\right),\ \ \xi^{3}=\left(%
\begin{array}{c}
0 \\ 
0 \\ 
1 \\ 
0%
\end{array}%
\right).
\end{equation*}
And the eigensapce with respect to $\lambda_{2}(\text{or}\ \lambda_{4})$ is
spanned by the two vectors 
\begin{equation*}
\xi^{2}=\left(%
\begin{array}{c}
0 \\ 
1 \\ 
0 \\ 
0%
\end{array}%
\right),\ \ \xi^{4}=\left(%
\begin{array}{c}
0 \\ 
0 \\ 
0 \\ 
1%
\end{array}%
\right).
\end{equation*}
Finally, we assume that $\alpha=0,\ H\neq 0$, then $\lambda_{1}=\lambda_{2}=%
\lambda_{3}=\lambda_{4}$, that is, $a(r,\phi)$ is a multiple of the identity
matrix of order $4$. This implies that each nonzero vector is an eigenvector
of $a(r,\phi)$. This completes the proof.
\end{proof}

\subsection{The real analytic category}

In the real analytic category, $r\in \R,\ \phi \in \R$ and $U\in \R^{4}$,
both $a$ and $b$ in (\ref{enzeeq4}) can be regarded as functions defined on $%
\R^{1}\times \R^{1}\times \R^{4}$, mapping into $R^{4\times 4}$ and $R^{4}$,
respectively. Looking at formulae (\ref{eoAin}), (\ref{Aex}), (\ref{Bex})
and (\ref{Cex}), we see that both $a$ and $b$ are clearly analytic. If we in
addition assume that the initial value function $f(\phi )=U(\mathbf{c},\phi
) $ is analytic, then by Cauchy-Kovalevskaya theorem, (\ref{enzeeq3}) or (%
\ref{enzeeq4}) has a unique solution on a neighborhood of $r=\mathbf{c}$
with initial value $f(\phi )$.

\section{Proof of Theorem \protect\ref{T-class}: the classification\label%
{Sec4c}}

\subsection{Rotationally symmetric surfaces}

Suppose in addition that it is rotationally symmetric, that is, $%
t=u(x,y)=u(r)$, which only depends on $r,r=(x^{2}+y^{2})^{1/2}$. Then (\ref%
{enzeeq7}) is reduced to a non-linear ODE of second order

\begin{equation}  \label{enzeeq8}
\frac{1}{3}r^4u_{rr}^{2}-(\frac{4}{3}ru_{r}^{3}+2r^{3}u_{r})u_{rr}+\frac{1}{3%
}\frac{u_{r}^{6}}{r^2}+u_{r}^{4}-r^{4}=0.
\end{equation}

We observe that $u$ is a solution to (\ref{enzeeq8}) if and only if $-u$ is
a solution to (\ref{enzeeq8}).

\begin{proposition}
\label{bapr1} The function $u=u(r)$ satisfies equation (\ref{enzeeq8}) if
and only if it satisfies either%
\begin{equation}
u_{rr}=\frac{2u_{r}^{3}+3r^{2}u_{r}+\sqrt{3}(u_{r}^{2}+r^{2})^{3/2}}{r^{3}}
\label{enzeeq9}
\end{equation}%
or 
\begin{equation}
u_{rr}=\frac{2u_{r}^{3}+3r^{2}u_{r}-\sqrt{3}(u_{r}^{2}+r^{2})^{3/2}}{r^{3}}
\label{enzeeq10}
\end{equation}
\end{proposition}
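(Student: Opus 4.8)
The plan is to read (\ref{enzeeq8}) as a quadratic equation in the single unknown $u_{rr}$ (with coefficients depending on $r$ and $u_{r}$) and to solve it by the quadratic formula. Writing it as $a\,u_{rr}^{2}+b\,u_{rr}+c=0$, one reads off $a=\frac{1}{3}r^{4}$, $b=-\left(\frac{4}{3}ru_{r}^{3}+2r^{3}u_{r}\right)$, and $c=\frac{1}{3}\frac{u_{r}^{6}}{r^{2}}+u_{r}^{4}-r^{4}$. Since $r>0$ we have $a\neq 0$, so this is a genuine quadratic; hence ``$u$ satisfies (\ref{enzeeq8})'' is equivalent to ``$u_{rr}$ equals one of the two roots.'' That is precisely the asserted dichotomy, so the biconditional is automatic once the two roots are computed and matched against the right-hand sides of (\ref{enzeeq9}) and (\ref{enzeeq10}).

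The one substantive step is the discriminant. Expanding gives $b^{2}=\frac{16}{9}r^{2}u_{r}^{6}+\frac{16}{3}r^{4}u_{r}^{4}+4r^{6}u_{r}^{2}$ and $4ac=\frac{4}{9}r^{2}u_{r}^{6}+\frac{4}{3}r^{4}u_{r}^{4}-\frac{4}{3}r^{8}$, so that
\begin{equation*}
b^{2}-4ac=\frac{4}{3}r^{2}u_{r}^{6}+4r^{4}u_{r}^{4}+4r^{6}u_{r}^{2}+\frac{4}{3}r^{8}.
\end{equation*}
The crux of the whole proposition is that this expression is a perfect cube up to the factor $\frac{4}{3}r^{2}$: expanding $(u_{r}^{2}+r^{2})^{3}=u_{r}^{6}+3u_{r}^{4}r^{2}+3u_{r}^{2}r^{4}+r^{6}$ shows that $b^{2}-4ac=\frac{4}{3}r^{2}(u_{r}^{2}+r^{2})^{3}$. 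This factorization is exactly what lets the square root be extracted in closed form.

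With this in hand, the quadratic formula $u_{rr}=\frac{-b\pm\sqrt{b^{2}-4ac}}{2a}$ finishes the argument. Using $r>0$ to write $\sqrt{b^{2}-4ac}=\frac{2}{\sqrt{3}}r(u_{r}^{2}+r^{2})^{3/2}$ and $-b=\frac{4}{3}ru_{r}^{3}+2r^{3}u_{r}$, I would factor the common $\frac{2}{3}r$ out of the numerator and cancel it against $2a=\frac{2}{3}r^{4}$, reducing the expression to
\begin{equation*}
u_{rr}=\frac{2u_{r}^{3}+3r^{2}u_{r}\pm\sqrt{3}(u_{r}^{2}+r^{2})^{3/2}}{r^{3}},
\end{equation*}
whose $+$ branch is (\ref{enzeeq9}) and whose $-$ branch is (\ref{enzeeq10}). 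I do not anticipate a genuine obstacle, since the statement is an algebraic identity and the only thing that could fail is the discriminant not being a perfect cube; the computation above shows that it is, so the argument closes.
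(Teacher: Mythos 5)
Your proposal is correct and follows essentially the same route as the paper: both treat (\ref{enzeeq8}) as a quadratic in $u_{rr}$, compute the discriminant $B^{2}-4AC=\frac{4}{3}r^{2}(u_{r}^{2}+r^{2})^{3}\geq 0$, and apply the quadratic formula to obtain the two branches (\ref{enzeeq9}) and (\ref{enzeeq10}). Your write-up is in fact slightly more explicit than the paper's, since you also carry out the final factoring and cancellation of $\frac{2}{3}r$ that the paper leaves to the reader.
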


\begin{proof}
Let 
\begin{equation*}
A(r)=\frac{1}{3}r^{4},\ \ B(r,u_{r})=-(\frac{4}{3}ru_{r}^{3}+2r^{3}u_{r}),\
\ C(r,u_{r})=\frac{1}{3}\frac{u_{r}^{6}}{r^{2}}+u_{r}^{4}-r^{4}.
\end{equation*}%
Then (\ref{enzeeq8}) can be written as 
\begin{equation*}
A(r)u_{rr}^{2}+B(r,u_{r})u_{rr}+C(r,u_{r})=0.
\end{equation*}%
Thus we immediately have 
\begin{equation}
u_{rr}=\frac{-B(r,u_{r})\pm \sqrt{B^{2}(r,u_{r})-4A(r)C(r,u_{r})}}{2A(r)},
\label{enzeeq11}
\end{equation}%
where 
\begin{equation*}
\begin{split}
B^{2}(r,u_{r})-4A(r)C(r,u_{r})& =\frac{4}{3}%
r^{2}u_{r}^{6}+4r^{4}u_{r}^{4}+4r^{6}u_{r}^{2}+\frac{4}{3}r^{8} \\
& =\frac{4}{3}r^{2}(u_{r}^{2}+r^{2})^{3}\geq 0.
\end{split}%
\end{equation*}%
This completes the proof.
\end{proof}

\begin{proposition}
\label{bapr2} The function $u=u(r)(\text{or}-u(r))$ satisfies equation (\ref%
{enzeeq9}) if and only if $-u(\text{or}\ u(r))$ satisfies equation (\ref%
{enzeeq10}).
\end{proposition}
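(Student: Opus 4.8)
The plan is to prove the claim by a direct substitution exploiting the parity of the two equations in the variable $u_r$. The essential observation is that in the numerators of (\ref{enzeeq9}) and (\ref{enzeeq10}), the polynomial part $2u_r^3 + 3r^2 u_r$ is \emph{odd} in $u_r$, whereas the radical term $(u_r^2 + r^2)^{3/2}$ is \emph{even} in $u_r$ (it depends on $u_r$ only through $u_r^2$). Thus replacing $u_r$ by $-u_r$ flips the sign of the odd part and fixes the radical, which is exactly the operation that interchanges the two equations.

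Concretely, I would set $v := -u$, so that $v_r = -u_r$ and $v_{rr} = -u_{rr}$. Suppose first that $u$ satisfies (\ref{enzeeq9}). Substituting $v_r = -u_r$ into the right-hand side of (\ref{enzeeq10}) written for $v$ and using $(v_r^2 + r^2)^{3/2} = (u_r^2 + r^2)^{3/2}$, I would compute
\begin{equation*}
\frac{2v_r^3 + 3r^2 v_r - \sqrt{3}(v_r^2 + r^2)^{3/2}}{r^3}
= -\frac{2u_r^3 + 3r^2 u_r + \sqrt{3}(u_r^2 + r^2)^{3/2}}{r^3} = -u_{rr} = v_{rr},
\end{equation*}
where the middle expression is precisely the right-hand side of (\ref{enzeeq9}) multiplied by $-1$, hence equals $-u_{rr}$. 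Therefore $v = -u$ satisfies (\ref{enzeeq10}).

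For the converse, since $u \mapsto -u$ is an involution, the same computation run with the roles of (\ref{enzeeq9}) and (\ref{enzeeq10}) interchanged (equivalently, applying the displayed identity to $-u$ in place of $u$) shows that if $-u$ satisfies (\ref{enzeeq10}) then $u$ satisfies (\ref{enzeeq9}). The parenthetical ``(or $-u(r)$)'' in the statement simply records this symmetry: the assertion is invariant under simultaneously negating the function and swapping the two ODEs.

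There is essentially no analytic obstacle here; the entire content is the sign bookkeeping of the odd and even parts. The only point requiring a moment's care is to confirm that the square-root term carries no sign change under $u_r \mapsto -u_r$, which is immediate since its argument $u_r^2 + r^2$ is manifestly even in $u_r$ and the principal (nonnegative) branch is used throughout.
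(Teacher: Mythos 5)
Your proof is correct, but it takes a genuinely different route from the paper's. You argue by direct substitution: writing $v=-u$, the odd part $2u_r^3+3r^2u_r$ of the numerator flips sign under $u_r\mapsto -u_r$ while the radical $\sqrt{3}(u_r^2+r^2)^{3/2}$ is even, so the right-hand side of (\ref{enzeeq10}) evaluated at $v$ is exactly the negative of the right-hand side of (\ref{enzeeq9}) evaluated at $u$, which matches $v_{rr}=-u_{rr}$; since this is an identity, both implications follow at once, and your involution remark handles the converse cleanly. The paper instead argues by contradiction: it assumes $u$ and $-u$ \emph{both} satisfy (\ref{enzeeq9}) and adds the two equations, forcing $2\sqrt{3}(u_r^2+r^2)^{3/2}/r^3=0$, impossible on the regular part where $D>0$ (and similarly for (\ref{enzeeq10})); this exclusivity, combined with the standing observation that (\ref{enzeeq8}) is invariant under $u\mapsto -u$ and the dichotomy of Proposition \ref{bapr1} (every solution of (\ref{enzeeq8}) satisfies one of the two branch equations), pins $-u$ to the other branch. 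Your argument buys self-containedness --- it needs neither Proposition \ref{bapr1} nor the symmetry of (\ref{enzeeq8}), and it is a one-line verification --- whereas the paper's argument records as a byproduct the geometrically useful fact that the two branches (\ref{enzeeq9}) and (\ref{enzeeq10}) are mutually exclusive at nonsingular points. Your closing remark about the principal branch of the square root is the right point of care, and note that since $r>0$ on the domain of the ODE, $u_r^2+r^2>0$ holds automatically, so no extra regularity hypothesis is needed for your computation.
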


\begin{proof}
First notice that $D^{2}=u_{r}^{2}+r^{2}$. To reach a contradiction, we
assume that both $u$ and $-u$ satisfy (\ref{enzeeq9}). Then 
\begin{equation*}
\begin{split}
0& =u_{rr}+(-u)_{rr} \\
& =\frac{2u_{r}^{3}+3r^{2}u_{r}+\sqrt{3}(u_{r}^{2}+r^{2})^{3/2}}{r^{3}}+%
\frac{-2u_{r}^{3}-3r^{2}u_{r}+\sqrt{3}(u_{r}^{2}+r^{2})^{3/2}}{r^{3}} \\
& =\frac{2\sqrt{3}(u_{r}^{2}+r^{2})^{3/2}}{r^{3}},
\end{split}%
\end{equation*}%
which is a contradiction on the regular part in which $D>0$. Similarly, if
we assume that both $u$ and $-u$ satisfy (\ref{enzeeq10}), we will also get
a contradiction. This completes the proof.
\end{proof}

In view of Proposition \ref{bapr1} and Proposition \ref{bapr2}, we see that,
to study equation (\ref{enzeeq8}), it is sufficient to study equation (\ref%
{enzeeq10}). Since (\ref{enzeeq10}) is independent of $u$, essentially it is
an ODE of first order for $u_{r}$. That is, if we let $v=u_{r}$ then (\ref%
{enzeeq10}) is just the first order equation for $v$ 
\begin{equation}
v_{r}=\frac{2v^{3}+3r^{2}v-\sqrt{3}(v^{2}+r^{2})^{3/2}}{r^{3}}.
\label{enzeeq12}
\end{equation}%
Moreover if we put $w=\frac{v}{r}$ then (\ref{enzeeq12}) can be expressed as
a little more simple form 
\begin{equation}
w_{r}=\frac{2w^{3}+2w-\sqrt{3}(1+w^{2})^{3/2}}{r},\ \ r>0.  \label{enzeeq13}
\end{equation}%
which can be solved in a closed form. We observe the numerator of the right
hand side of (\ref{enzeeq13}) and notice that $w_{r}\geq 0$ if and only if $%
2w^{3}+2w-\sqrt{3}(1+w^{2})^{3/2}\geq 0$, which is equivalent to that $w\geq 
\sqrt{3}$. And it is easy to see that $w=\sqrt{3}$ is a solution to (\ref%
{enzeeq13}). Actually we have Theorem \ref{gesol} for the general solutions
as follows. See Figure 1 for the graph of $w=w(r).$

\begin{figure}[h]
\includegraphics[width=10.4cm]{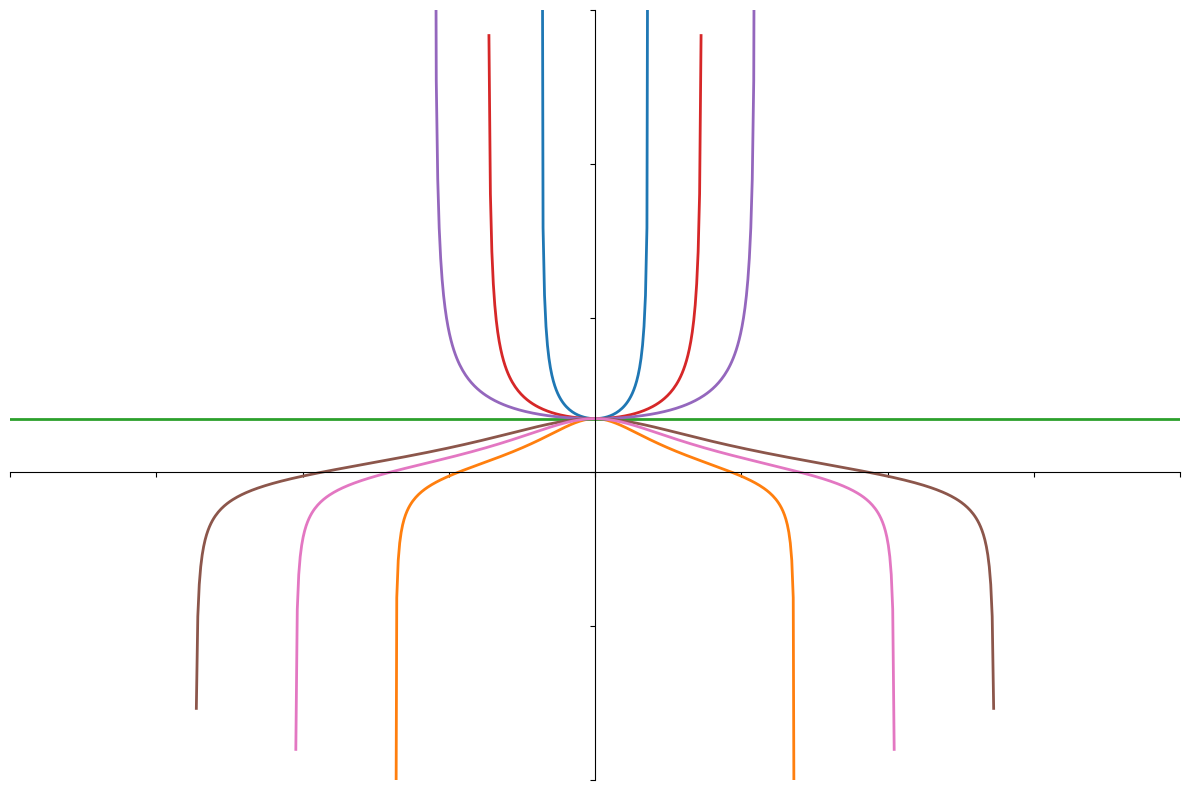}
\caption{(\protect\ref{soleq131}) and (\protect\ref{soleq132})}
\end{figure}

\begin{theorem}
\label{gesol} The general solutions to (\ref{enzeeq13}) is as follows:

\begin{enumerate}
\item The constant function $w=\sqrt{3}$ is a solution.

\item For each $\rho _{0}>0$, the function 
\begin{equation}
w(r)=\frac{r^{2}+\frac{\sqrt{3}}{2}\rho _{0}^{2}}{\sqrt{\rho _{0}^{4}-(r^{2}+%
\frac{\sqrt{3}}{2}\rho _{0}^{2})^{2}}},\ \ r>0,  \label{soleq131}
\end{equation}%
is a solution. We have that $w(r)>\sqrt{3}$ for all $r>0$. And 
\begin{equation*}
w(r)\rightarrow \left\{ 
\begin{array}{rl}
\sqrt{3}, & \text{as}\ r\rightarrow 0, \\ 
\infty , & \text{as}\ r\rightarrow r_{0}=\sqrt{\frac{2-\sqrt{3}}{2}}\rho
_{0}.%
\end{array}%
\right.
\end{equation*}%
Moreover, fixing a number $r$, we have $w(r)\rightarrow \sqrt{3}$ as $\rho
_{0}\rightarrow \infty $.

\item For each $a>0\ (or\ \rho _{0}>0,a^{2}=\frac{\sqrt{3}}{2}\rho _{0}^{2})$%
, the function 
\begin{equation}
w(r)=\frac{-\sqrt{3}(r^{2}-a^{2})}{\sqrt{4a^{4}-3(r^{2}-a^{2})^{2}}}=\frac{%
-(r^{2}-\frac{\sqrt{3}}{2}\rho _{0}^{2})}{\sqrt{\rho _{0}^{4}-(r^{2}-\frac{%
\sqrt{3}}{2}\rho _{0}^{2})^{2}}},\ \ r>0,  \label{soleq132}
\end{equation}%
is a solution with $w(a)=0$. We have that $w(r)<\sqrt{3}$ for all $r>0$, and 
\begin{equation*}
w(r)\rightarrow \left\{ 
\begin{array}{rl}
\sqrt{3}, & \text{as}\ r\rightarrow 0, \\ 
-\infty , & \text{as}\ r\rightarrow b=\sqrt{\frac{2+\sqrt{3}}{\sqrt{3}}}a=%
\sqrt{\frac{2+\sqrt{3}}{2}}\rho _{0}.%
\end{array}%
\right.
\end{equation*}%
Moreover, fixing a number $r$, we have $w(r)\rightarrow \sqrt{3}$ as $%
a\rightarrow \infty $.
\end{enumerate}
\end{theorem}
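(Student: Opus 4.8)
The plan is to linearize equation (\ref{enzeeq13}) by a single well-chosen substitution and then solve the resulting elementary ODE explicitly. The starting observation is that the numerator on the right-hand side factors: since $2w^{3}+2w=2w(1+w^{2})$, we have
\begin{equation*}
2w^{3}+2w-\sqrt{3}(1+w^{2})^{3/2}=(1+w^{2})\left(2w-\sqrt{3}\sqrt{1+w^{2}}\right).
\end{equation*}
This immediately recovers the constant solution $w=\sqrt{3}$ of part (1), because $2\sqrt{3}-\sqrt{3}\sqrt{4}=0$, and it also explains the sign dichotomy noted just before the theorem, since $(1+w^{2})/r>0$ for $r>0$ and $2w-\sqrt{3}\sqrt{1+w^{2}}\ge 0$ precisely when $w\ge\sqrt{3}$.

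Next I would introduce $g:=w/\sqrt{1+w^{2}}$, equivalently $w=\tan\psi$ with $g=\sin\psi$. A direct computation gives $dg/dw=(1+w^{2})^{-3/2}$, so
\begin{equation*}
g_{r}=\frac{w_{r}}{(1+w^{2})^{3/2}}=\frac{(1+w^{2})\left(2w-\sqrt{3}\sqrt{1+w^{2}}\right)}{r(1+w^{2})^{3/2}}=\frac{2g-\sqrt{3}}{r}.
\end{equation*}
Thus the nonlinear equation (\ref{enzeeq13}) becomes the linear first-order ODE $g_{r}-\tfrac{2}{r}g=-\tfrac{\sqrt{3}}{r}$, whose general solution is $g(r)=\tfrac{\sqrt{3}}{2}+Cr^{2}$ for a constant $C\in\R$. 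Because $w\mapsto g$ is a smooth bijection from $\R$ onto $(-1,1)$, every solution of (\ref{enzeeq13}) arises this way; inverting via $w=g/\sqrt{1-g^{2}}$ (with the sign of $w$ agreeing with that of $g$) yields all solutions in closed form, which simultaneously settles completeness of the list.

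The three cases of the theorem then correspond exactly to the sign of $C$. Taking $C=0$ returns $g\equiv\sqrt{3}/2$, i.e.\ $w\equiv\sqrt{3}$, which is part (1). Writing $C=1/\rho_{0}^{2}>0$ gives $g=(r^{2}+\tfrac{\sqrt{3}}{2}\rho_{0}^{2})/\rho_{0}^{2}$, and substituting into $w=g/\sqrt{1-g^{2}}$ reproduces (\ref{soleq131}); since then $g>\sqrt{3}/2$ one has $w>\sqrt{3}$, as claimed. Writing $C=-1/\rho_{0}^{2}<0$ gives $g=-(r^{2}-\tfrac{\sqrt{3}}{2}\rho_{0}^{2})/\rho_{0}^{2}$ and reproduces (\ref{soleq132}), with $g<\sqrt{3}/2$ and hence $w<\sqrt{3}$.

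Finally, all the asymptotic and range assertions are read off from these explicit formulas, using that $w\to\pm\infty$ exactly as $1-g^{2}\to 0^{+}$. The domain of each solution is dictated by the constraint $|g|<1$: in the Type I case $g<1$ forces $r^{2}<\tfrac{2-\sqrt{3}}{2}\rho_{0}^{2}$, giving blow-up at $r_{0}=\sqrt{(2-\sqrt{3})/2}\,\rho_{0}$, while in the Type II case $g>-1$ forces $r^{2}<\tfrac{2+\sqrt{3}}{2}\rho_{0}^{2}$, giving blow-up at $b=\sqrt{(2+\sqrt{3})/2}\,\rho_{0}$; in both cases $r\to 0$ sends $g\to\sqrt{3}/2$, and for fixed $r$ the limit $\rho_{0}\to\infty$ sends $g\to\sqrt{3}/2$, so $w\to\sqrt{3}$. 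I expect no genuine obstacle: the only real content is spotting the linearizing substitution $g=w/\sqrt{1+w^{2}}$, after which every claim is a routine verification, and the main care needed is the bookkeeping of signs and of the intervals of validity when inverting $g\mapsto w$.
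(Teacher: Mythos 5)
Your proposal is correct, and while the underlying change of variable is the same as the paper's, your route through it is genuinely different and in some respects cleaner. The paper treats (\ref{enzeeq13}) as a separable equation: it substitutes $w=\tan\theta$ with $\theta\in(-\frac{\pi}{2},\frac{\pi}{3})$, grinds the integral $\int dw/(2w^{3}+2w-\sqrt{3}(1+w^{2})^{3/2})$ down to $\frac{1}{2}\ln(\sqrt{3}-2\sin\theta)$, obtains $(\sqrt{3}-2w/\sqrt{1+w^{2}})^{1/2}=Cr$ in the regime $w<\sqrt{3}$, and then handles $w>\sqrt{3}$ only ``similarly,'' with separate positive constants $C$ in each regime; your $g=w/\sqrt{1+w^{2}}$ is exactly the paper's $\sin\theta$, so the two computations are cousins. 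What your version buys: first, the factorization $2w^{3}+2w-\sqrt{3}(1+w^{2})^{3/2}=(1+w^{2})\bigl(2w-\sqrt{3}\sqrt{1+w^{2}}\bigr)$ converts the equation into the honestly \emph{linear} ODE $g_{r}=(2g-\sqrt{3})/r$, whose complete solution family $g=\frac{\sqrt{3}}{2}+Cr^{2}$ with $C\in\mathbb{R}$ produces all three cases of the theorem ($C=0$, $C>0$, $C<0$) from a single formula, with no case split and no sign bookkeeping on the integration constant. Second, the completeness assertion implicit in ``general solutions'' comes out for free: uniqueness for the linear equation on $r>0$ plus the fact that $w\mapsto g$ is a smooth bijection of $\mathbb{R}$ onto $(-1,1)$ shows every solution is on your list, whereas the paper's separation argument tacitly assumes the numerator never vanishes along a solution (i.e.\ that a nonconstant solution never touches $w=\sqrt{3}$); in your parametrization this is settled at a glance, since $g-\frac{\sqrt{3}}{2}=Cr^{2}$ vanishes for $r>0$ only when $C=0$. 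Third, the blow-up radii $r_{0}=\sqrt{(2-\sqrt{3})/2}\,\rho_{0}$ and $b=\sqrt{(2+\sqrt{3})/2}\,\rho_{0}$, the inequalities $w\gtrless\sqrt{3}$ (via monotonicity of $g\mapsto g/\sqrt{1-g^{2}}$), and the limits as $r\to 0$ and $\rho_{0}\to\infty$ all reduce to the single constraint $|g|<1$ and are routine, as you say. I checked your inversions against (\ref{soleq131}) and (\ref{soleq132}) and they match; there is no gap.
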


\begin{proof}
We will only show formulae (\ref{soleq131}) and (\ref{soleq132}) for $w(r)$
while the others follow easily. As equation (\ref{enzeeq13}) is separable,
taking the substitution $w=\tan {\theta },\ \theta \in (\frac{-\pi }{2},%
\frac{\pi }{3})$, we have $w\in (-\infty ,\sqrt{3})$ and 
\begin{equation*}
\begin{split}
\int \frac{dr}{r}& =\int \frac{dw}{2w^{3}+2w-\sqrt{3}(1+w^{2})^{3/2}} \\
& =\int \frac{\sec ^{2}{\theta }\ d\theta }{\sec ^{2}{\theta }(2\tan {\theta 
}-\sqrt{3}(\sec ^{2}{\theta })^{1/2})} \\
& =\int \frac{d\theta }{2\tan {\theta }-\sqrt{3}\sec {\theta }},\ \ \sec {%
\theta }>0 \\
& =\int \frac{\cos {\theta }\ d\theta }{2\sin {\theta }-\sqrt{3}},\ \ \cos {%
\theta }>0 \\
& =\int \frac{d\sin {\theta }}{2\sin {\theta }-\sqrt{3}} \\
& =\frac{1}{2}\ln {(\sqrt{3}-2\sin {\theta })},\ \ \text{where}\ \sqrt{3}%
-2\sin {\theta }>0 \\
& =\ln {\left( \sqrt{3}-\frac{2w}{\sqrt{1+w^{2}}}\right) ^{1/2}}.
\end{split}%
\end{equation*}%
Therefore 
\begin{equation}
\left( \sqrt{3}-\frac{2w}{\sqrt{1+w^{2}}}\right) ^{1/2}=Cr,  \label{sol1}
\end{equation}%
for some constant $C>0$. If we take $C=\left( \frac{\sqrt{3}}{a^{2}}\right)
^{1/2},\ a>0$, the solution (\ref{sol1}) reads as 
\begin{equation*}
\frac{4w^{2}}{1+w^{2}}=\frac{3}{a^{4}}(a^{2}-r^{2})^{2}
\end{equation*}%
or 
\begin{equation*}
w=\frac{-\sqrt{3}(r^{2}-a^{2})}{\sqrt{4a^{4}-3(r^{2}-a^{2})^{2}}}=\frac{%
-(r^{2}-\frac{\sqrt{3}}{2}\rho _{0}^{2})}{\sqrt{\rho _{0}^{4}-(r^{2}-\frac{%
\sqrt{3}}{2}\rho _{0}^{2})^{2}}}.
\end{equation*}%
This is the formula (\ref{soleq132}). Similarly, for $w>\sqrt{3}$, we can
show that 
\begin{equation*}
\int \frac{dr}{r}=\ln {\left( \frac{2w}{\sqrt{1+w^{2}}}-\sqrt{3}\right)
^{1/2}}
\end{equation*}%
Therefore 
\begin{equation}
\left( \frac{2w}{\sqrt{1+w^{2}}}-\sqrt{3}\right) ^{1/2}=Cr,  \label{sol2}
\end{equation}%
for some constant $C>0$. If we take $C=\left( \frac{2}{\rho _{0}^{2}}\right)
^{1/2}$, the solution (\ref{sol2}) reads as 
\begin{equation*}
w=\frac{r^{2}+\frac{\sqrt{3}}{2}\rho _{0}^{2}}{\sqrt{\rho _{0}^{4}-(r^{2}+%
\frac{\sqrt{3}}{2}\rho _{0}^{2})^{2}}}.
\end{equation*}%
This is the formula (\ref{soleq131}). We thus complete the proof.
\end{proof}

\begin{proof}
\textbf{(of Theorem \ref{T-class})} For solutions $w(r)$ to (\ref{enzeeq13}%
), the corresponding solutions $u(r)$ to (\ref{enzeeq10}) are, respectively,
computed as follows. If $w(r)=\sqrt{3}$, then 
\begin{equation}
\begin{split}
u(r)-u(0)& =\int_{0}^{r}u_{s}\ ds=\int_{0}^{r}sw(s)\ ds \\
& =\int_{0}^{r}\sqrt{3}s\ ds=\frac{\sqrt{3}}{2}r^{2}.
\end{split}%
\end{equation}%
So if $u(0)=0$ then we have $u(r)=\frac{\sqrt{3}}{2}r^{2}$. By Proposition %
\ref{bapr2}, $u(r)=-\frac{\sqrt{3}}{2}r^{2}$ is another solution. If $w(r)$
is one of (\ref{soleq131}), then 
\begin{equation}
\begin{split}
u(r)-u(0)& =\int_{0}^{r}u_{s}\ ds=\int_{0}^{r}sw(s)\ ds \\
& =\int_{0}^{r}s\frac{s^{2}+\frac{\sqrt{3}}{2}\rho _{0}^{2}}{\sqrt{\rho
_{0}^{4}-(s^{2}+\frac{\sqrt{3}}{2}\rho _{0}^{2})^{2}}}\ ds \\
& =\frac{1}{2}\int_{\frac{\sqrt{3}}{2}\rho _{0}^{2}}^{r^{2}+\frac{\sqrt{3}}{2%
}\rho _{0}^{2}}\frac{x}{\sqrt{\rho _{0}^{4}-x^{2}}}\ dx,\ \ x=s^{2}+\frac{%
\sqrt{3}}{2}\rho _{0}^{2} \\
& =\frac{1}{4}\rho _{0}^{2}-\frac{1}{2}\sqrt{\rho _{0}^{4}-\left( r^{2}+%
\frac{\sqrt{3}}{2}\rho _{0}^{2}\right) ^{2}}.
\end{split}%
\end{equation}%
Therefore if $u(0)=-\frac{1}{4}\rho _{0}^{2}$ then we have 
\begin{equation}
u(r)=-\frac{1}{2}\sqrt{\rho _{0}^{4}-\left( r^{2}+\frac{\sqrt{3}}{2}\rho
_{0}^{2}\right) ^{2}},\ \ 0<r<\sqrt{\frac{2-\sqrt{3}}{2}}\rho _{0},
\label{gra1}
\end{equation}%
which is part of a type I shifted Heisenberg sphere. If $w(r)$ is one of (%
\ref{soleq132}), then 
\begin{equation}
\begin{split}
u(r)-u(0)& =\int_{0}^{r}u_{s}\ ds=\int_{0}^{r}sw(s)\ ds \\
& =-\int_{0}^{r}s\frac{s^{2}-\frac{\sqrt{3}}{2}\rho _{0}^{2}}{\sqrt{\rho
_{0}^{4}-(s^{2}-\frac{\sqrt{3}}{2}\rho _{0}^{2})^{2}}}\ ds \\
& =-\frac{1}{2}\int_{-\frac{\sqrt{3}}{2}\rho _{0}^{2}}^{r^{2}-\frac{\sqrt{3}%
}{2}\rho _{0}^{2}}\frac{x}{\sqrt{\rho _{0}^{4}-x^{2}}}\ dx,\ \ x=s^{2}-\frac{%
\sqrt{3}}{2}\rho _{0}^{2} \\
& =-\frac{1}{4}\rho _{0}^{2}+\frac{1}{2}\sqrt{\rho _{0}^{4}-\left( r^{2}-%
\frac{\sqrt{3}}{2}\rho _{0}^{2}\right) ^{2}}.
\end{split}%
\end{equation}%
Therefore if $u(0)=\frac{1}{4}\rho _{0}^{2},$ then we have 
\begin{equation}
u(r)=\frac{1}{2}\sqrt{\rho _{0}^{4}-\left( r^{2}-\frac{\sqrt{3}}{2}\rho
_{0}^{2}\right) ^{2}},\ \ 0<r<\sqrt{\frac{2+\sqrt{3}}{2}}\rho _{0}.
\label{gra2}
\end{equation}%
This is part/a graph of a type II shifted Heisenberg sphere. It is not
possible to glue parts of type I and type II shifted Heisenberg spheres to
form a new surface of $C^{2}.$ See Remark \ref{R-3-0} below.
\end{proof}

\begin{remark}
\label{R-3-0} It is easy to see that for a type I shifted Heisenberg sphere $%
u(\sqrt{\frac{2-\sqrt{3}}{2}}\rho _{0})=0$ and after a straightforward
computation for $r_{uu}$ at $u=0$, we have%
\begin{equation}
r_{uu}(0)=\frac{-2\sqrt{2}}{(2-\sqrt{3})^{\frac{1}{2}}\rho _{0}^{3}}
\label{2ndde1}
\end{equation}
From (\ref{gra2}) we see that $u(b)=0$ for some $b>0$ and after a
straightforward computation for $r_{uu}$ at $u=0$, we have 
\begin{equation}
r_{uu}(0)=\frac{-(2+\sqrt{3})}{b^{3}},
\end{equation}%
where the number $b$ is the radius in some sense. If we take the radius $%
b=r_{0}=\sqrt{\frac{2-\sqrt{3}}{2}}\rho _{0}$, then%
\begin{equation}
r_{uu}(0)=\frac{-(2+\sqrt{3})}{b^{3}}=-\left( \frac{2+\sqrt{3}}{2-\sqrt{3}}%
\right) \frac{2\sqrt{2}}{(2-\sqrt{3})^{\frac{1}{2}}\rho _{0}^{3}}.
\label{2ndde2}
\end{equation}%
\ \noindent From (\ref{2ndde1}) and (\ref{2ndde2}), we conclude that the two
graphs (\ref{gra1}) and (\ref{gra2}) can be glued together at $u=0$ to
become a $C^{1}$ smooth curve, but not of $C^{2}$.
\end{remark}

In terms of polar coordinates $(r,\phi )$, we have 
\begin{equation}
\begin{split}
N^{\perp }& =\sin {\theta }\ \partial _{x}-\cos {\theta }\ \partial _{y} \\
& =\sin {\theta }(\cos {\phi }\ \partial _{r}-\frac{\sin {\phi }}{r}\
\partial _{\phi })-\cos {\theta }(\sin {\phi }\ \partial _{r}-\frac{\cos {%
\phi }}{r}\ \partial _{\phi }) \\
& =\sin {(\theta -\phi )}\partial _{r}-\frac{\cos {(\theta -\phi )}}{r}%
\partial _{\phi } \\
N& =\cos {\theta }\ \partial _{x}+\sin {\theta }\ \partial _{y} \\
& =\cos {\theta }(\cos {\phi }\ \partial _{r}-\frac{\sin {\phi }}{r}\
\partial _{\phi })+\sin {\theta }(\sin {\phi }\ \partial _{r}-\frac{\cos {%
\phi }}{r}\ \partial _{\phi }) \\
& =\cos {(\theta -\phi )}\partial _{r}+\frac{\sin {(\theta -\phi )}}{r}%
\partial _{\phi }.
\end{split}
\label{fifuda}
\end{equation}

From \eqref{fifuda}, we have 
\begin{equation*}
\begin{split}
N^{\perp}&=\sin{(\theta-\phi)}\partial_{r}-\frac{\cos{(\theta-\phi)}}{r}%
\partial_{\phi}, \\
N&=\cos{(\theta-\phi)}\partial_{r}+\frac{\sin{(\theta-\phi)}}{r}%
\partial_{\phi},
\end{split}%
\end{equation*}
where 
\begin{equation*}
\begin{split}
\sin{(\theta-\phi)}&=\sin{\theta}\cos{\phi}-\cos{\theta}\sin{\phi} \\
&=\frac{(u_{y}+x)}{D}\frac{x}{r}-\frac{(u_{x}-y)}{D}\frac{y}{r} \\
&=\frac{(xu_{y}-yu_{x})+r^{2}}{rD} \\
&=\frac{r^{2}+u_{\phi}}{rD},
\end{split}%
\end{equation*}
and 
\begin{equation*}
\begin{split}
\cos{(\theta-\phi)}&=\cos{\theta}\cos{\phi}-\sin{\theta}\sin{\phi} \\
&=\frac{(u_{x}-y)}{D}\frac{x}{r}-\frac{(u_{y}+x)}{D}\frac{y}{r} \\
&=\frac{(xu_{x}+yu_{y})}{rD} \\
&=\frac{ru_{r}}{rD}=\frac{u_{r}}{D}.
\end{split}%
\end{equation*}
Therefore 
\begin{equation}  \label{fifuda1}
\begin{split}
N^{\perp}&=\left(\frac{r^{2}+u_{\phi}}{rD}\right)\partial_{r}-\left(\frac{%
u_{r}}{rD}\right)\partial_{\phi}, \\
N&=\left(\frac{u_{r}}{D}\right)\partial_{r}+\left(\frac{r^{2}+u_{\phi}}{%
r^{2}D}\right)\partial_{\phi}.
\end{split}%
\end{equation}

\begin{example}
Let $u=\pm\frac{\sqrt{3}}{2}r^{2}$. We have 
\begin{equation*}
u_{x}-y=\pm\sqrt{3}x-y,\ \ u_{y}+x=x\pm\sqrt{3}y,
\end{equation*}
thus 
\begin{equation*}
D=\sqrt{(u_{x}-y)^{2}+(u_{y}+x)^{2}}=\sqrt{4(x^{2}+y^{2})}=2r,
\end{equation*}
and hence 
\begin{equation*}
\alpha=-\frac{1}{D}=-\frac{1}{2r},
\end{equation*}
and 
\begin{equation*}
\begin{split}
N^{\perp}&=\left(\frac{r^{2}+u_{\phi}}{rD}\right)\partial_{r}-\left(\frac{%
u_{r}}{rD}\right)\partial_{\phi}, \\
&=\frac{1}{2}\partial_{r}\mp\frac{\sqrt{3}}{2r}\partial_{\phi}.
\end{split}%
\end{equation*}
From \eqref{fifuda}, we have 
\begin{equation*}
s=\sin{(\theta-\phi)}=\frac{1}{2},\ c=\cos{(\theta-\phi)}=\pm\frac{\sqrt{3}}{%
2}.
\end{equation*}
This implies that 
\begin{equation*}
\theta=\left\{%
\begin{array}{ll}
\phi+\frac{\pi}{6}, & \ \ \text{for}\ u=\frac{\sqrt{3}}{2}r^{2} \\ 
\phi+\frac{5\pi}{6}, & \ \ \text{for}\ u=-\frac{\sqrt{3}}{2}r^{2}%
\end{array}%
\right.
\end{equation*}
and hence 
\begin{equation*}
\begin{split}
H&=-N^{\perp}\theta=\pm\frac{\sqrt{3}}{2r}, \\
-N^{\perp}\alpha&=-\frac{1}{4r^{2}},
\end{split}%
\end{equation*}
Thus 
\begin{equation*}
e_{1}(\alpha)+\frac{1}{2}\alpha^{2}+\frac{1}{6}H^{2}=0,\ \ \text{where}\
e_{1}=-N^{\perp}.
\end{equation*}
On the other hand, we compute 
\begin{equation*}
\det{A(r,\phi,U)}=s^{2}(\frac{1}{3}sH+2c\alpha)(\frac{2}{3}sH+2c\alpha)=%
\frac{5}{48r^2}\neq 0,\ \ \text{for}\ \ r>0.
\end{equation*}
Therefore each curve defined by $r=\mathbf{c}>0$ is a non-characteristic
curve.
\end{example}


\begin{figure}[h]
\includegraphics[width=10.4cm]{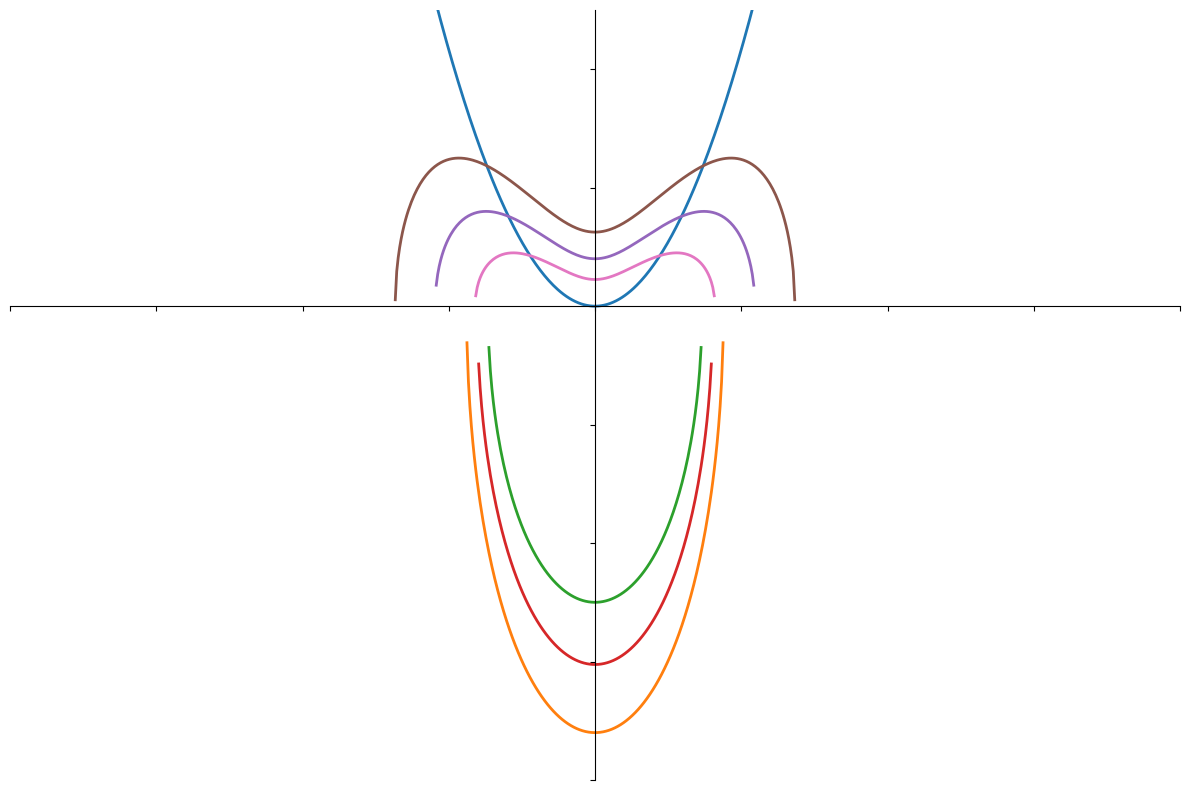}
\caption{(\protect\ref{3-20})}
\end{figure}

\begin{example}
For a type I shifted Heisenberg sphere and a type II shifted Heisenberg
sphere, we let 
\begin{equation}
u(r)=\mp\frac{1}{2}\sqrt{\rho_{0}^{4}-\left(r^{2}\pm\frac{\sqrt{3}}{2}%
\rho_{0}^{2}\right)^{2}},\ \ 0<r<\sqrt{\frac{2\mp\sqrt{3}}{2}}\rho_{0},
\label{3-20}
\end{equation}
which, respectively, describes the lower semi-sphere of type I and the upper
semi-sphere of type II. We have 
\begin{equation*}
u_{r}=\frac{\pm\left(r^{2}\pm\frac{\sqrt{3}}{2}\rho_{0}^{2}\right)}{\sqrt{%
\rho_{0}^{4}-\left(r^{2}\pm\frac{\sqrt{3}}{2}\rho_{0}^{2}\right)^{2}}},
\end{equation*}
thus 
\begin{equation*}
D=\sqrt{r^{2}+u_{r}^{2}}=\frac{r\rho_{0}^{2}}{\sqrt{\rho_{0}^{4}-\left(r^{2}%
\pm\frac{\sqrt{3}}{2}\rho_{0}^{2}\right)^{2}}},
\end{equation*}
and hence 
\begin{equation*}
\alpha=-\frac{1}{D},
\end{equation*}%
and 
\begin{equation*}
\begin{split}
N^{\perp}&=\left(\frac{r^{2}+u_{\phi}}{rD}\right)\partial_{r}-\left(\frac{%
u_{r}}{rD}\right)\partial_{\phi}, \\
&=\left(\frac{r}{D}\right)\partial_{r}-\left(\frac{u_{r}}{rD}%
\right)\partial_{\phi},\ \ \ u_{\phi}=0 \\
&=\frac{\sqrt{\rho_{0}^{4}-\left(r^{2}\pm\frac{\sqrt{3}}{2}%
\rho_{0}^{2}\right)^{2}}}{\rho_{0}^{2}}\partial_{r}\mp\frac{\left(r^{2}\pm%
\frac{\sqrt{3}}{2}\rho_{0}^{2}\right)}{r\rho_{0}^{2}}\partial_{\phi}.
\end{split}%
\end{equation*}
From \eqref{fifuda}, we have 
\begin{equation*}
s=\sin{(\theta-\phi)}=\frac{\sqrt{\rho_{0}^{4}-\left(r^{2}\pm\frac{\sqrt{3}}{%
2}\rho_{0}^{2}\right)^{2}}}{\rho_{0}^{2}},
\end{equation*}
and 
\begin{equation*}
c=\cos{(\theta-\phi)}=\pm\frac{\left(r^{2}\pm\frac{\sqrt{3}}{2}%
\rho_{0}^{2}\right)}{\rho_{0}^{2}}.
\end{equation*}
Therefore 
\begin{equation*}
\theta-\phi=\sin^{-1}\left(\frac{\sqrt{\rho_{0}^{4}-\left(r^{2}\pm\frac{%
\sqrt{3}}{2}\rho_{0}^{2}\right)^{2}}}{\rho_{0}^{2}}\right).
\end{equation*}
We compute the $p$-mean curvature 
\begin{equation*}
\begin{split}
H&=-N^{\perp}\theta \\
&=\pm\frac{\left(r^{2}\pm\frac{\sqrt{3}}{2}\rho_{0}^{2}\right)}{r\rho_{0}^{2}%
}-\frac{\sqrt{\rho_{0}^{4}-\left(r^{2}\pm\frac{\sqrt{3}}{2}%
\rho_{0}^{2}\right)^{2}}}{\rho_{0}^{2}}\partial_{r}\left(\sin^{-1}\frac{%
\sqrt{\rho_{0}^{4}-\left(r^{2}\pm\frac{\sqrt{3}}{2}\rho_{0}^{2}\right)^{2}}}{%
\rho_{0}^{2}}\right),
\end{split}%
\end{equation*}
where, by chain rules, 
\begin{equation*}
\begin{split}
\partial_{r}\left(\sin^{-1}\frac{\sqrt{\rho_{0}^{4}-\left(r^{2}\pm\frac{%
\sqrt{3}}{2}\rho_{0}^{2}\right)^{2}}}{\rho_{0}^{2}}\right)&=\frac{1}{\cos{%
(\theta-\phi)}}\partial_{r}\left(\frac{\sqrt{\rho_{0}^{4}-\left(r^{2}\pm%
\frac{\sqrt{3}}{2}\rho_{0}^{2}\right)^{2}}}{\rho_{0}^{2}}\right) \\
&=\frac{1}{\cos{(\theta-\phi)}}\left(\frac{-2r\left(r^{2}\pm\frac{\sqrt{3}}{2%
}\rho_{0}^{2}\right)}{\rho_{0}^{2}\sqrt{\rho_{0}^{4}-\left(r^{2}\pm\frac{%
\sqrt{3}}{2}\rho_{0}^{2}\right)^{2}}}\right).
\end{split}%
\end{equation*}
Therefore 
\begin{equation*}
\begin{split}
H&=-N^{\perp}\theta \\
&=\pm\frac{\left(r^{2}\pm\frac{\sqrt{3}}{2}\rho_{0}^{2}\right)}{r\rho_{0}^{2}%
}-\frac{1}{\cos{(\theta-\phi)}}\left(\frac{-2r\left(r^{2}\pm\frac{\sqrt{3}}{2%
}\rho_{0}^{2}\right)}{\rho_{0}^{4}}\right) \\
&=\pm\frac{\left(r^{2}\pm\frac{\sqrt{3}}{2}\rho_{0}^{2}\right)}{r\rho_{0}^{2}%
}\pm\frac{2r}{\rho_{0}^{2}} \\
&=\pm\frac{\left(3r^{2}\pm\frac{\sqrt{3}}{2}\rho_{0}^{2}\right)}{%
r\rho_{0}^{2}}.
\end{split}%
\end{equation*}
Also, after a straightforwards computation, we have 
\begin{equation*}
-N^{\perp}\alpha=\frac{4}{\rho_{0}^{4}}\left(-\frac{u^{2}}{r^{2}}-\frac{%
r^{2}\pm\frac{\sqrt{3}}{2}\rho_{0}^{2}}{2}\right).
\end{equation*}
Thus 
\begin{equation*}
e_{1}(\alpha)+\frac{1}{2}\alpha^{2}+\frac{1}{6}H^{2}=0,\ \ \text{where}\
e_{1}=-N^{\perp}.
\end{equation*}
\end{example}

\begin{remark}
Both the geometric invariants $\alpha$ and $H$ we compute in examples are
with respect to the horizontal normal 
\begin{equation*}
e_{2}=\frac{\nabla_{b}\psi}{\|\nabla_{b}\psi\|},
\end{equation*}
where $\psi$ is the defining function defined by 
\begin{equation*}
\psi=t-u.
\end{equation*}
With respect to such a horizontal normal $e_{2}$, we have

\begin{enumerate}
\item The function $\alpha$ is always negative, i.e., $\alpha=-\frac{1}{D}%
\leq 0$.

\item $e_{1}=-N^{\perp }$, when projected onto the $xy$-plane.
\end{enumerate}

If we choose the horizontal normal $\tilde{e}_{2}$ such that 
\begin{equation*}
\tilde{e}_{2}=\frac{\nabla_{b}\tilde{\psi}}{\|\nabla_{b}\tilde{\psi}\|}
\end{equation*}
with the defining function 
\begin{equation*}
\tilde{\psi}=u^{2}-t^{2}.
\end{equation*}
Since 
\begin{equation*}
\begin{split}
\nabla_{b}\tilde{\psi}&=(u+t)\nabla_{b}(u-t)+(u-t)\nabla_{b}(u+t) \\
&=2u\nabla_{b}(u-t),\ \ \ \text{on}\ t-u=0,
\end{split}%
\end{equation*}
we see that on the surface $\Sigma$ defined by the graph of $t=u$, we have 
\begin{equation*}
\tilde{e}_{2}=\left\{%
\begin{array}{ll}
-e_{2}, & \ \ \text{when}\ u>0 \\ 
e_{2}, & \ \ \text{when}\ u<0%
\end{array}%
\right.
\end{equation*}
Therefore the invariants $\alpha$ and $H$ with respect to $\tilde{e}_{2}$
will have the same sign as those with respect to $e_{2}$ when $u<0$; and
will differ by a sign when $u>0$. Notice that, for a Pansu sphere, the
horizontal normal $\tilde{e}_{2}$ is globally defined on the whole regular
part and the corresponding $p$-mean curvature $H$ is a positive constant.
\end{remark}

\begin{figure}[h]
\includegraphics[width=10.4cm]{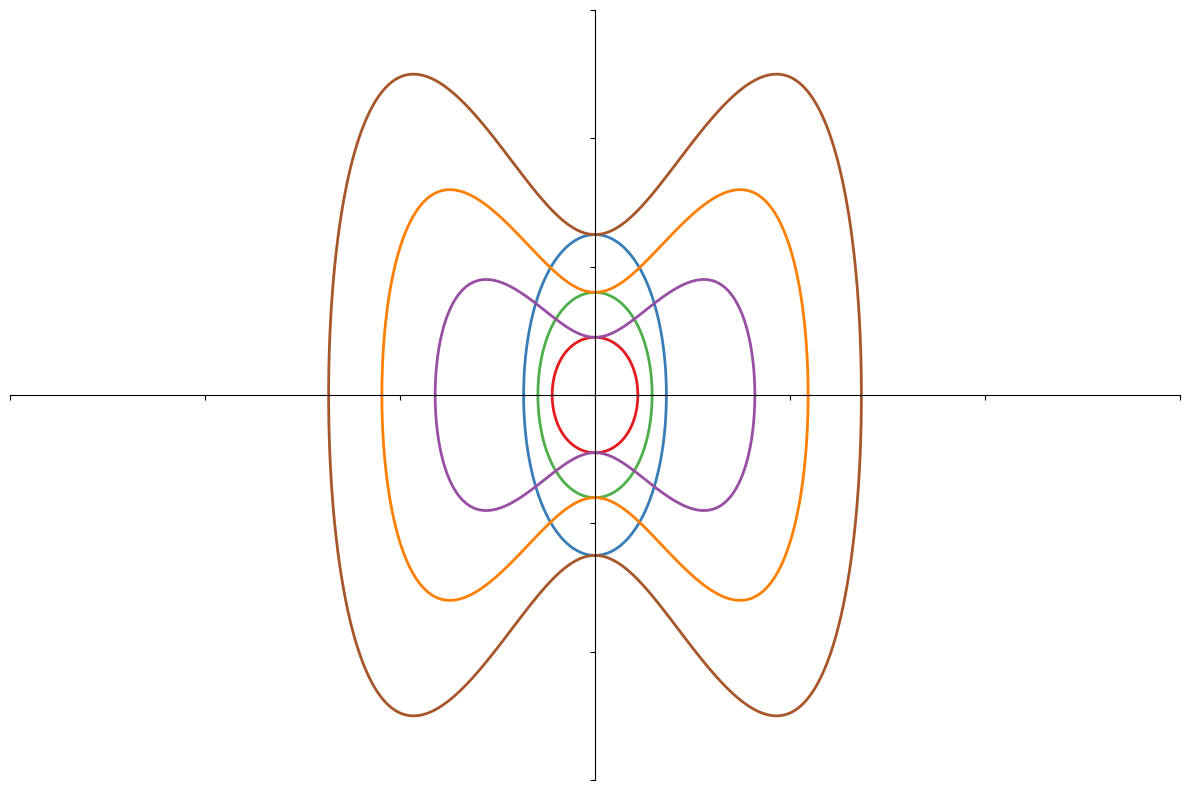}
\caption{closed pictures}
\end{figure}

\begin{remark}
\label{R-3-1} The solutions $u(r)=\pm \frac{\sqrt{3}}{2}r^{2}$ in (1)/(2) of
Theorem \ref{T-class} can be obtained as a limit of dilations applied to
type I/II shifted Heisenberg spheres (see Figure 3 for closed pictures)
after translating their \textquotedblleft south pole" to the origin. Let $%
\tilde{t}=t+\frac{1}{4}\rho _{0}^{2}.$ The surfaces defined by $(r^{2}\pm 
\frac{\sqrt{3}}{2}\rho _{0}^{2})^{2}+4t^{2}=\rho _{0}^{4},$ $\rho _{0}>0,$
can be translated to the form $(r^{2}\pm \frac{\sqrt{3}}{2}\rho
_{0}^{2})^{2}+4(\tilde{t}-\frac{1}{4}\rho _{0}^{2})^{2}=\rho _{0}^{4}$
passing through $(r=0,\tilde{t}=0).$ Now applying the dilations $(r,\tilde{t}%
)$ $\rightarrow $ $(\lambda r,\lambda ^{2}\tilde{t}),$ $\lambda >0,$ and
letting $\lambda \rightarrow 0$ after dividing by $\lambda ^{2}$ yield%
\begin{equation*}
\pm \sqrt{3}\rho _{0}^{2}r^{2}-2\rho _{0}^{2}\tilde{t}=0
\end{equation*}%
which are solutions (1)/(2) of Theorem \ref{T-class}.
\end{remark}

\section{Proof of Theorem \protect\ref{T-2nd-var}: second variation of $%
E_{1} \label{Sec4}$ for the Clifford torus}

\subsection{First variation of $E_{1}:$review}

We first review some material in \cite{CYZ18}, including the first variation
of $E_{1}$ and lemmas which will be used to compute the second variation of $%
E_{1}$ for the Clifford torus in the standard CR 3-sphere, which is a
critical point of $E_1$.

Let $\Sigma _{t}=F_{t}(\Sigma )$ be a family of surfaces in a 3-dimensional
pseudohermitian manifold ($M,J,\Theta $) with $\Sigma _{0}=\Sigma ,$ such
that 
\begin{equation}
\frac{d}{dt}F_{t}=X=fe_{2}+gT.  \label{fg}
\end{equation}%
\noindent Here we let $e_{1}$ be the unit vector in $T\Sigma _{t}\cap \xi $
and $e_{2}=Je_{1}$. We assume $f$ and $g$ are supported in a domain of $%
\Sigma $ away from the singular set of $\Sigma $. We denote $h:=f-\alpha g$
and $V:=T+\alpha e_2$. As satisfied by the standard CR 3-sphere, we assume $%
(M,J,\Theta )$ has vanishing torsion and constant Webster scalar curvature
in this section.

\begin{theorem}
\label{firstvariation1} \label{CYZ18} (\cite[Theorem 3]{CYZ18}) Assume $%
(M,J,\Theta )$ has vanishing torsion and constant Webster scalar curvature.
Let $F_{t}(\Sigma )$ be given by (\ref{fg}). We have 
\begin{equation}
\frac{d}{dt}E_1[\Sigma_t]=\frac{d}{dt}\int_{\Sigma_t}dA_{1} =\int_{\Sigma_t }%
\mathcal{E}_{1}h\Theta \wedge e^{1},  \label{firstvariationofE1}
\end{equation}%
where 
\begin{equation*}
dA_1=|H_{cr}|^{3/2}\Theta\wedge e^1, \quad H_{cr}:=e_{1}(\alpha )+\frac{1}{2}%
\alpha ^{2}+\frac{1}{6}H^{2}+\frac{1}{4}W,
\end{equation*}
\begin{eqnarray}
\mathcal{E}_1&=&\frac{1}{2}e_{1}(|H_{cr}|^{1/2}\mathfrak{f})+\frac{3}{2}%
|H_{cr}|^{1/2}\alpha \mathfrak{f}  \label{Epsilon1-0} \\
&&+\frac{1}{2}sign(H_{cr})|H_{cr}|^{1/2}\{9V(\alpha)+6HH_{cr}-\frac{1}{3}%
H^3\},  \notag
\end{eqnarray}
and $\mathfrak{f}$ is the function on $\Sigma_t$ given by (see \cite[(3.11)]%
{CYZ18}) 
\begin{eqnarray*}
\mathfrak{f} &\mathfrak{=}&|H_{cr}|^{-1}\{e_{1}(H)(e_{1}(\alpha )+\frac{1}{2}%
\alpha ^{2}+\frac{1}{3}H^{2}+\frac{1}{4}W) \\
&&+HV(H)+\frac{3}{2}V(e_{1}(\alpha )+\frac{1}{2}\alpha ^{2}) \\
&&-\frac{7}{2}\alpha He_{1}(\alpha )-\frac{5}{2}\alpha ^{3}H-\frac{2}{3}%
\alpha H^{3}-\frac{5}{4}\alpha HW\}.
\end{eqnarray*}
\end{theorem}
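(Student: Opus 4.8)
The plan is to compute $\frac{d}{dt}\big|_{t=0}E_1[\Sigma_t]=\frac{d}{dt}\big|_{t=0}\int_{\Sigma_t}|H_{cr}|^{3/2}\,\Theta\wedge e^1$ by pulling the integrand back to the fixed surface $\Sigma$ through $F_t$, so that the moving domain is replaced by a fixed one and the derivative passes inside the integral as a derivative of the integrand. The first reduction I would make uses that $E_1$ is a geometric (reparametrization-invariant) functional: a purely tangential velocity leaves the image of a compactly supported piece unchanged and hence contributes nothing to first order. Decomposing the velocity $X=fe_2+gT$ against the surface frame $e_1,\ V=T+\alpha e_2$ and the normal $e_2-\alpha T$ shows the genuine normal speed is carried by $h=f-\alpha g$ (one finds the normal coefficient equal to $h/(1+\alpha^2)$). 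Thus every variation below should be expressible as a first-order-in-$t$ linear differential operator applied to $h$, which explains why the answer has the shape $\int_\Sigma \mathcal{E}_1\,h\,\Theta\wedge e^1$.

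Next I would derive the first-variation formulas for the individual building blocks of $H_{cr}=e_1(\alpha)+\tfrac12\alpha^2+\tfrac16 H^2+\tfrac14 W$: the variation $\delta\alpha$ of the deviation function, the variation $\delta H$ of the $p$-mean curvature, the variation of the characteristic direction $e_1$ (needed to differentiate $e_1(\alpha)$, since the frame itself moves), and the variation $\delta(\Theta\wedge e^1)$ of the adapted area form. These rest on the pseudohermitian structure equations together with the commutator $[e_1,V]=-\alpha H e_1-2\alpha V$ recorded in \cite[(3.42)]{CYZ18}. The standing hypothesis of vanishing torsion and constant Webster curvature is what keeps the result tractable: the torsion and curvature terms in the structure equations drop out and $\delta W=0$, so $\delta H_{cr}=\delta(e_1(\alpha))+\alpha\,\delta\alpha+\tfrac13 H\,\delta H$ becomes a manageable combination of $h$, $e_1(h)$, $V(h)$ and their second derivatives.

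With these formulas in hand I would assemble
\[
\frac{d}{dt}\Big|_{t=0}E_1=\int_{\Sigma}\Big[\tfrac32|H_{cr}|^{1/2}\,\mathrm{sign}(H_{cr})\,\delta H_{cr}+|H_{cr}|^{3/2}\,(\text{coefficient of }\delta(\Theta\wedge e^1))\Big]\Theta\wedge e^1,
\]
and then integrate by parts repeatedly to move all $e_1$- and $V$-derivatives off $h$. Since $f,g$ (hence $h$) are supported away from the singular set, the divergence theorem for the surface frame produces no boundary terms, so all the derivative-of-$h$ contributions collapse into a single factor multiplying $h$. Collecting the first-order data into the auxiliary function $\mathfrak{f}$ of \cite[(3.11)]{CYZ18} isolates the Euler--Lagrange density $\mathcal{E}_1=\tfrac12 e_1(|H_{cr}|^{1/2}\mathfrak{f})+\tfrac32|H_{cr}|^{1/2}\alpha\mathfrak{f}+\tfrac12\,\mathrm{sign}(H_{cr})|H_{cr}|^{1/2}\{9V(\alpha)+6HH_{cr}-\tfrac13 H^3\}$, which is exactly the stated formula (\ref{Epsilon1-0}).

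I expect the main obstacle to be twofold. First, deriving the correct evolution equation for $H$ and for $e_1(\alpha)$: these are second order in $h$ and require careful use of the frame variation and the structure equations, and a sign or coefficient error there propagates everywhere. Second, the bookkeeping in the integration by parts, where recovering the precise numerical coefficients ($\tfrac32$, $9$, $6$, $-\tfrac13$) and the exact form of $\mathfrak{f}$ demands meticulous tracking of every commutator and of every term that is eliminated by the torsion-free and constant-$W$ hypotheses. The reparametrization reduction to $h$ in the first step is the conceptual device that keeps this bookkeeping honest.
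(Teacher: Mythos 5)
You should first note a structural point: the paper you are writing into does not prove this statement at all --- it is quoted verbatim from \cite[Theorem 3]{CYZ18} --- so the only fair comparison is with that proof, whose ingredients the present paper reviews in Section \ref{Sec4} (the frame-variation identities of Lemma \ref{Lemma4}, the evolution equations of Lemma \ref{Lemma6}, and the integration-by-parts identities (\ref{e-23})--(\ref{e-24})). Your outline follows exactly that route: vary $\alpha$, $H$, $e_1(\alpha)$ and the area form $\Theta\wedge e^1$ via the structure equations under the torsion-free, constant-$W$ hypotheses, then integrate by parts to isolate a density multiplying $h$. Your preliminary reduction is also correct as stated: decomposing $X=fe_2+gT$ against $e_1$, $V=T+\alpha e_2$ and $e_2-\alpha T$ does give the normal coefficient $h/(1+\alpha^2)$ with $h=f-\alpha g$, which correctly explains the shape $\int_{\Sigma}\mathcal{E}_1\,h\,\Theta\wedge e^1$.

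The genuine gap is that everything constituting the actual content of the theorem is deferred rather than done. The statement is not merely that the first variation has the form $\int \mathcal{E}_1 h$; it is the precise formulas for $\mathfrak{f}$ and $\mathcal{E}_1$, including the coefficients $\tfrac32$, $9$, $6$, $-\tfrac13$. To get these one must actually derive the second-order evolution equations --- e.g.\ $\frac{dH}{dt}=e_1e_1(h)+2\alpha e_1(h)+4h\bigl(e_1(\alpha)+\alpha^2+\tfrac14H^2+\tfrac12W\bigr)+gV(H)$ and $\frac{d}{dt}e_1(\alpha)=e_1V(h)+ge_1V(\alpha)+2fV(\alpha)+fHe_1(\alpha)$ --- invoke the Codazzi-like equation to eliminate $e_1e_1(\alpha)$, and carry out the integrations by parts, which produce zeroth-order corrections ($2\alpha$ in (\ref{e-23}), $-\alpha H$ in (\ref{e-24})) because $\Theta\wedge e^1$ is not invariant under the frame flows; your appeal to "the divergence theorem for the surface frame" with no boundary terms glosses precisely these correction terms, which feed into the coefficients of $\mathcal{E}_1$. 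A second, smaller elision: your opening claim that every variation "should be expressible as a first-order-in-$t$ operator applied to $h$" is false pointwise --- Lemma \ref{Lemma6} shows $\frac{d\alpha}{dt}=V(h)+gV(\alpha)$ retains an explicit $g$-transport term, and such terms cancel only after integration, where they assemble into tangential divergences. So the plan is the right plan, and I see no step that would fail in principle, but as submitted it verifies nothing beyond the general form of (\ref{firstvariationofE1}); the computations that distinguish the stated $\mathcal{E}_1$ and $\mathfrak{f}$ from any other density of the same shape are exactly the part left out.
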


We rewrite $|H_{cr}|\mathfrak{f}$ and $\mathcal{E}_1$ in the following form:

\begin{proposition}
\label{firstvariation2} We have 
\begin{equation*}
|H_{cr}|\mathfrak{f}\mathfrak{=}e_{1}(H)H_{cr}+\frac{3}{2}V(H_{cr})+\frac{1}{%
2}He_{1}(H_{cr})-\alpha HH_{cr},
\end{equation*}
\end{proposition}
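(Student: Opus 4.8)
The plan is to verify the identity by a direct expansion that reduces everything to the Codazzi-like equation. First I would write out $|H_{cr}|\mathfrak{f}$ explicitly: multiplying the formula for $\mathfrak{f}$ in Theorem~\ref{firstvariation1} by $|H_{cr}|$ simply cancels the prefactor $|H_{cr}|^{-1}$ and leaves the bracketed expression verbatim (so no sign of $H_{cr}$ intervenes). The one bookkeeping observation that organizes the whole computation is that the coefficient in the leading term of $\mathfrak{f}$ rewrites as
\[
e_{1}(\alpha)+\frac{1}{2}\alpha^{2}+\frac{1}{3}H^{2}+\frac{1}{4}W
=H_{cr}+\frac{1}{6}H^{2},
\]
so that $e_{1}(H)\big(e_{1}(\alpha)+\tfrac12\alpha^{2}+\tfrac13 H^{2}+\tfrac14 W\big)=e_{1}(H)H_{cr}+\tfrac16 H^{2}e_{1}(H)$, which already produces the term $e_{1}(H)H_{cr}$ demanded by the right-hand side.

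Next I would expand the claimed right-hand side $e_{1}(H)H_{cr}+\tfrac{3}{2}V(H_{cr})+\tfrac{1}{2}He_{1}(H_{cr})-\alpha HH_{cr}$ using the definition $H_{cr}=e_{1}(\alpha)+\tfrac12\alpha^{2}+\tfrac16 H^{2}+\tfrac14 W$ together with the Leibniz rule for the derivations $e_{1}$ and $V$. Since we assume constant Webster scalar curvature, $e_{1}(W)=V(W)=0$, so $W$ survives only algebraically. Subtracting this expansion from $|H_{cr}|\mathfrak{f}$, the terms $e_{1}(H)H_{cr}$, $\tfrac16 H^{2}e_{1}(H)$, $\tfrac32 V(e_{1}\alpha)$ and $\tfrac32\alpha V(\alpha)$ cancel identically, while every remaining term turns out to carry a common factor $\tfrac12 H$; collecting the cubic and mixed terms, the difference equals
\[
\frac{1}{2}H\left[V(H)-e_{1}e_{1}(\alpha)-6\alpha e_{1}(\alpha)
-4\alpha^{3}-\alpha H^{2}-2\alpha W\right].
\]

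It therefore remains to show the bracket vanishes, i.e. $V(H)=e_{1}e_{1}(\alpha)+6\alpha e_{1}(\alpha)+4\alpha^{3}+\alpha H^{2}+2\alpha W$. This is exactly the Codazzi-like equation: translating to the projected frame via $e_{1}=-N^{\perp}$ and $V=T+\alpha e_{2}=-\alpha N$ (as in the discussion preceding (\ref{codeq1})), the bracket becomes
\[
-\alpha N(H)=N^{\perp}N^{\perp}\alpha-6\alpha N^{\perp}(\alpha)
+4\alpha^{3}+\alpha H^{2}+2\alpha W,
\]
which is precisely (\ref{codeq1}) with the Webster-curvature term reinstated, i.e. \cite[(3.24)]{CYZ18} before setting $W=0$ (here we use that $W$ is constant). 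The main obstacle is purely organizational, namely keeping the rational coefficients of the many $\alpha H e_{1}(\alpha)$, $\alpha^{3}H$, $\alpha H^{3}$ and $\alpha HW$ terms straight through the expansion; the conceptual content is simply the recognition that the entire discrepancy between $|H_{cr}|\mathfrak{f}$ and the asserted right-hand side is governed by the Codazzi equation.
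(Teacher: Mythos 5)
Your proposal is correct and takes essentially the same route as the paper's proof: both rest on the rewriting $e_{1}(\alpha)+\frac{1}{2}\alpha^{2}+\frac{1}{3}H^{2}+\frac{1}{4}W=H_{cr}+\frac{1}{6}H^{2}$ (with constancy of $W$ used to pass $e_{1}$ and $V$ through the curvature term) and on the Codazzi-like equation \cite[(3.24)]{CYZ18}, the only difference being organizational — you verify that the difference of the two sides equals $\frac{1}{2}H\left[V(H)-e_{1}e_{1}(\alpha)-6\alpha e_{1}(\alpha)-4\alpha^{3}-\alpha H^{2}-2\alpha W\right]$, whereas the paper transforms $|H_{cr}|\mathfrak{f}$ step by step into the stated right-hand side. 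Your coefficient bookkeeping checks out exactly, and your identification of the residual bracket with the $W$-reinstated version of (\ref{codeq1}) under $e_{1}=-N^{\perp}$, $V=-\alpha N$ is accurate.
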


and 
\begin{eqnarray}
\mathcal{E}_1&=&|H_{cr}|^{-\frac{1}{2}}\{-\frac{1}{4}sign(H_{cr})\mathfrak{f}%
e_1(H_{cr})+\frac{1}{2}e_1(|H_{cr}|\mathfrak{f})  \label{Epsilon1} \\
&&+\frac{3}{2}|H_{cr}|\mathfrak{f}\alpha+H_{cr}[\frac{9}{2}%
V(\alpha)+3HH_{cr}-\frac{1}{6}H^3]\}.  \notag
\end{eqnarray}
Proof. We compute 
\begin{eqnarray*}
|H_{cr}|\mathfrak{f} &\mathfrak{=}&e_{1}(H)(H_{cr}+\frac{1}{6}H^{2}) \\
&&+HV(H)+\frac{3}{2}V(H_{cr}-\frac{1}{6}H^{2}) \\
&&-\frac{7}{2}\alpha He_{1}(\alpha )-\frac{5}{2}\alpha ^{3}H-\frac{2}{3}%
\alpha H^{3}-\frac{5}{4}\alpha HW \\
&=&e_{1}(H)H_{cr}+\frac{3}{2}V(H_{cr}) \\
&&+\frac{1}{2}He_{1}(H_{cr}-e_{1}(\alpha )-\frac{1}{2}\alpha ^{2})+\frac{1}{2%
}HV(H) \\
&&-\frac{7}{2}\alpha He_{1}(\alpha )-\frac{5}{2}\alpha ^{3}H-\frac{2}{3}%
\alpha H^{3}-\frac{5}{4}\alpha HW.
\end{eqnarray*}
Then, using the Codazzi-like equation (\cite[(3.24)]{CYZ18}) 
\begin{equation*}
e_1e_1(\alpha)=-6\alpha e_1(\alpha)+V(H)-\alpha H^2-4\alpha^2-2W\alpha,
\end{equation*}
we find 
\begin{eqnarray*}
|H_{cr}|\mathfrak{f} &=&e_{1}(H)H_{cr}+\frac{3}{2}V(H_{cr})+\frac{1}{2}%
He_{1}(H_{cr}) \\
&&-\frac{1}{2}H[-5\alpha e_{1}(\alpha )-\alpha H^{2}-4\alpha ^{3}-2W\alpha ]
\\
&&-\frac{7}{2}\alpha He_{1}(\alpha )-\frac{5}{2}\alpha ^{3}H-\frac{2}{3}%
\alpha H^{3}-\frac{5}{4}\alpha HW \\
&=&e_{1}(H)H_{cr}+\frac{3}{2}V(H_{cr})+\frac{1}{2}He_{1}(H_{cr}) \\
&&-H\alpha \lbrack e_{1}(\alpha )+\frac{1}{2}\alpha ^{2}+\frac{1}{6}H^{2}+%
\frac{1}{4}W] \\
&=&e_{1}(H)H_{cr}+\frac{3}{2}V(H_{cr})+\frac{1}{2}He_{1}(H_{cr})-\alpha
HH_{cr}.
\end{eqnarray*}

We rewrite (\ref{Epsilon1-0}) as 
\begin{eqnarray*}
\mathcal{E}_1&=&\frac{1}{2}e_{1}(|H_{cr}|^{1/2}\mathfrak{f)}+\frac{3}{2}%
|H_{cr}|^{1/2}\alpha \mathfrak{f} \\
&&+\frac{1}{2}sign(H_{cr})|H_{cr}|^{1/2}\{9V(\alpha )+6HH_{cr}-\frac{1}{3}%
H^{3}\} \\
&=&|H_{cr}|^{-\frac{1}{2}}\{ \frac{3}{2}|H_{cr}|\mathfrak{f}\alpha+\frac{1}{2%
}H_{cr}[9V(\alpha )+6HH_{cr}-\frac{1}{3}H^{3}] \\
&&+\frac{1}{2}e_1(|H_{cr}|\mathfrak{f})-\frac{1}{4}sign(H_{cr})\mathfrak{f}
e_1(H_{cr}) \}.
\end{eqnarray*}
$\hfill \Box$

\subsection{Second variation of $E_{1}$}

We assume that $\Sigma \hookrightarrow (M,J,\Theta )$ satisfies $\mathcal{E}%
_{1}(\Sigma )=0$, i.e. a critical point of $E_{1}$, and $H_{cr}\neq 0$. Let $%
\Sigma _{t}=F_{t}(\Sigma )$ be a family of surfaces in ($M,J,\Theta $) with $%
\Sigma _{0}=\Sigma ,$ such that (recall (\ref{fg}))%
\begin{equation*}
\frac{d}{dt}F_{t}=X=fe_{2}+gT.
\end{equation*}
Here we let $e_{1}$ be the unit vector in $T\Sigma _{t}\cap \xi $ and $%
e_{2}=Je_{1}$. We denote $h:=f-\alpha g$ and $V:=T+\alpha e_{2}$. Then by (%
\ref{firstvariationofE1}) and (\ref{Epsilon1}), the second variation of $%
E_{1}$ for $\Sigma $ is given by 
\begin{eqnarray}
\frac{d^{2}}{dt^{2}}|_{t=0}E_{1}(\Sigma _{t}) &=&\int_{\Sigma }|H_{cr}|^{-%
\frac{1}{2}}\frac{d}{dt}\{-\frac{1}{4}|H_{cr}|\mathfrak{f}\frac{1}{H_{cr}}%
e_{1}(H_{cr})+\frac{1}{2}e_{1}(|H_{cr}|\mathfrak{f})
\label{secondvariation1} \\
&&+\frac{3}{2}|H_{cr}|\mathfrak{f}\alpha +H_{cr}[\frac{9}{2}V(\alpha
)+3HH_{cr}-\frac{1}{6}H^{3}]\}h\Theta \wedge e^{1}.\quad   \notag
\end{eqnarray}%
We need some lemmas to compute the above second variation of $E_{1}$ for the
Clifford torus, some were proved in \cite{CYZ18}.

Assume $\{e_1,e_2=Je_1,T\}$ is an orthonormal frame of $(M,J,\Theta)$ with
respect to the adapted metric $g_\Theta=\frac{1}{2}d\Theta(\cdot,J\cdot)+%
\Theta\otimes\Theta$ and $\{e^1,e^2,\Theta\}$ is its dual coframe. There is
a real 1-form $\omega$ such that the following holds (see, for instance,
Appendix of \cite{CYZ18}), under the assumption that $(M,J,\Theta)$ has
vanishing torsion, 
\begin{eqnarray*}
de^{1} &=&-e^{2}\wedge \omega ,\text{ }de^{2}=e^{1}\wedge \omega , \\
\lbrack e_{2},e_{1}] &=&\omega (e_{2})e_{2}+\omega (e_{1})e_{1}+2T, \\
\lbrack e_{1},T] &=&-\omega (T)e_{2},\text{ }[e_{2},T]=\omega (T)e_{1}.
\end{eqnarray*}

\begin{lemma}
(\cite[Lemma 4]{CYZ18}) \label{Lemma4} \label{L-4-1} Let $h:=f-\alpha g$ and 
$V:=$ $T+\alpha e_{2}.$ Under the torsion free condition, we have 
\begin{equation*}
\omega(e_1)=H, \quad \omega (e_{2})=h^{-1}e_{1}(h)+2\alpha, \quad \omega
(T)=e_{1}(\alpha )-\alpha h^{-1}e_{1}(h),  \label{omegaT}
\end{equation*}%
\begin{equation*}
e_{2}(\alpha )=h^{-1}V(h),  \label{e2alpha}
\end{equation*}%
\begin{equation*}
\lbrack e_{1},V]=-\alpha He_{1}-2\alpha V,
\end{equation*}
\end{lemma}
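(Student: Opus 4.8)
The plan is to derive all five identities directly from the displayed structure equations $de^1=-e^2\wedge\omega$, $[e_2,e_1]=\omega(e_2)e_2+\omega(e_1)e_1+2T$, $[e_1,T]=-\omega(T)e_2$, $[e_2,T]=\omega(T)e_1$, together with the geometric meaning of $H$, $\alpha$ and $V$, and to treat the claims in an order that exploits their logical dependence. I would separate them into two groups: the \emph{frame identities} $\omega(e_1)=H$ and $[e_1,V]=-\alpha He_1-2\alpha V$, which involve only the ambient connection and the intrinsic data $H,\alpha$; and the \emph{variational identities} for $\omega(e_2)$, $\omega(T)$, $e_2(\alpha)$, in which the function $h=f-\alpha g$ enters through the variation $X=fe_2+gT$ of \eqref{fg}.

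First I would settle $\omega(e_1)=H$. The structure equation $de^1=-e^2\wedge\omega$ is consistent with $\nabla_Xe_1=\omega(X)e_2$ for the adapted (vanishing-torsion) connection, so evaluating at $X=e_1$ and using that the $p$-mean curvature is the turning rate $\nabla_{e_1}e_1=He_2$ of the characteristic direction gives $\omega(e_1)=H$ at once; this is consistent with $e_1(\theta)=H$, i.e. with $N^{\perp}\theta=-H$ in \eqref{iden1}. Next I would expand, using $V=T+\alpha e_2$,
\[
[e_1,V]=[e_1,T]+e_1(\alpha)\,e_2+\alpha\,[e_1,e_2],
\]
substitute $[e_1,T]=-\omega(T)e_2$ and $[e_1,e_2]=-[e_2,e_1]=-(\omega(e_2)e_2+\omega(e_1)e_1+2T)$, and collect components. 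The $e_1$-component is $-\alpha\omega(e_1)e_1=-\alpha He_1$, the $T$-component is automatically $-2\alpha T$, and the $e_2$-component is $(e_1(\alpha)-\omega(T)-\alpha\omega(e_2))e_2$. Comparing with $-\alpha He_1-2\alpha V=-\alpha He_1-2\alpha^2e_2-2\alpha T$ shows that the bracket identity is \emph{equivalent} to the single scalar relation $\omega(T)+\alpha\omega(e_2)=e_1(\alpha)+2\alpha^2$, which is an immediate consequence of the formulas for $\omega(e_2)$ and $\omega(T)$. Thus the fifth identity reduces to $\omega(e_1)=H$ plus the second and third identities.

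It remains to prove the three formulas containing $h$. The key point is that $\{e_1,e_2,T\}$ is adapted to the \emph{foliation} $\{\Sigma_t\}$ swept out by $F_t$: $e_1$ is the characteristic direction of each leaf, so $\omega(e_2)$ and $\omega(T)$ record how the characteristic frame turns as one moves between leaves, and this is controlled by the variation. Indeed the leaf normal is proportional to $e_2-\alpha T$, the tangential/normal splitting being $e_2=\tfrac{\alpha}{1+\alpha^2}V+\tfrac{1}{1+\alpha^2}(e_2-\alpha T)$, and $h=f-\alpha g$ is exactly the normal part of $X$. The plan is to compute $\omega(e_2)$, $\omega(T)$ and $e_2(\alpha)$ by applying Cartan's formula $d\beta(Y,Z)=Y\beta(Z)-Z\beta(Y)-\beta([Y,Z])$ to $\beta\in\{e^1,e^2,\Theta\}$ on pairs from $\{e_1,e_2,T\}$, and by differentiating the two defining relations of the frame — that $e_1$ is a unit characteristic vector and that $V$ is tangent to each leaf — in the transverse directions. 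Differentiating these relations converts transverse derivatives of $\alpha$ and of the frame angle into $t$-derivatives along the flow, which are governed by $h$, and the logarithmic terms $h^{-1}e_1(h)$ and $h^{-1}V(h)$ appear as the tangential gradient of this normal variation.

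The main obstacle, I expect, is precisely the bookkeeping for these $h$-dependent formulas: one must track the leaf-dependence of $e_1$ and the normal component of $e_2$, and then show that the transverse connection coefficients assemble exactly into $\omega(e_2)=h^{-1}e_1(h)+2\alpha$ and $\omega(T)=e_1(\alpha)-\alpha h^{-1}e_1(h)$, with the compatibility $e_2(\alpha)=h^{-1}V(h)$ emerging from commuting the $e_2$- and $V$-derivatives. Once these are established, the scalar relation $\omega(T)+\alpha\omega(e_2)=e_1(\alpha)+2\alpha^2$ follows by direct substitution, which closes the proof of the bracket identity and hence of the lemma.
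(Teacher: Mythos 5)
Your treatment of the two frame identities is sound: $\omega(e_1)=H$ is immediate once one recalls that $H$ is defined by $\nabla_{e_1}e_1=He_2$ and that $\omega$ is the connection form rotating $e_1$ into $e_2$, and your reduction of $[e_1,V]=-\alpha He_1-2\alpha V$ to the single scalar relation $\omega(T)+\alpha\omega(e_2)=e_1(\alpha)+2\alpha^2$ is correct (I verified the component bookkeeping) and cleanly exhibits the fifth identity as a consequence of the second and third. For calibration: the paper itself gives no proof of this lemma --- it is imported verbatim from \cite[Lemma 4]{CYZ18} --- so your attempt has to stand on its own.

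The genuine gap is that the three formulas containing $h$, namely $\omega(e_2)=h^{-1}e_1(h)+2\alpha$, $\omega(T)=e_1(\alpha)-\alpha h^{-1}e_1(h)$ and $e_2(\alpha)=h^{-1}V(h)$, are the actual content of the lemma, and you never derive them: your proposal ends by naming the ``bookkeeping'' for exactly these formulas as the expected main obstacle. Moreover, the tool you designate --- Cartan's formula applied to $e^1,e^2,\Theta$ on pairs from $\{e_1,e_2,T\}$ --- cannot by itself produce $h$, since those evaluations merely reproduce the structure equations you started from, in which $h$ does not occur. The missing idea is the precise link between the foliation parameter and the coframe. The $1$-form $e^2-\alpha\Theta$ annihilates $T\Sigma_t$ (it kills both $e_1$ and $V=T+\alpha e_2$), and $(e^2-\alpha\Theta)(X)=f-\alpha g=h$; hence on the foliated neighborhood the time function satisfies $h\,dt=e^2-\alpha\Theta$ (this is also where the hypothesis $h\neq 0$, implicit in $h^{-1}$, enters). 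Taking $d$ of this relation and using $de^2=e^1\wedge\omega$ and $d\Theta=2e^1\wedge e^2$ gives
\[
d(e^2-\alpha\Theta)=h^{-1}\,dh\wedge(e^2-\alpha\Theta),
\]
and evaluating both sides on $(e_1,e_2)$, $(e_1,T)$ and $(e_2,T)$ yields exactly the three $h$-dependent identities, respectively; your suggestion that $e_2(\alpha)=h^{-1}V(h)$ should emerge ``from commuting the $e_2$- and $V$-derivatives'' is not how it arises --- it is the $(e_2,T)$-component of the same equation. Without this step (or an equivalent computation along the flow of $X$), your argument establishes only $\omega(e_1)=H$ and a conditional form of the bracket identity, not the lemma.
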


\begin{lemma}
(\cite[Lemma 6]{CYZ18}) \label{Lemma6} \label{L-4-4} Suppose the torsion
vanishes, i.e., $A_{\bar{1}}^{1}$ $=$ $0$. Then we have%
\begin{equation*}
\frac{dH}{dt}=e_{1}e_{1}(h)+2\alpha e_{1}(h)+4h(e_{1}(\alpha )+\alpha ^{2}+%
\frac{1}{4}H^{2}+\frac{1}{2}W)+gV(H),  \label{dHdt}
\end{equation*}%
\begin{equation*}
\frac{d\alpha }{dt}=V(h)+gV(\alpha ),
\end{equation*}%
\begin{equation*}
\frac{d}{dt}e_{1}(\alpha )=e_{1}V(h)+ge_{1}V(\alpha )+2fV(\alpha
)+fHe_{1}(\alpha ).
\end{equation*}
\end{lemma}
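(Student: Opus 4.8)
The plan is to prove the three displayed identities---the first variations along the deformation $X=fe_2+gT$ of (\ref{fg}) of the deviation function $\alpha$, the $p$-mean curvature $H$, and the composite quantity $e_1(\alpha)$---by the method of moving frames. The inputs are the ambient (torsion-free) structure equations displayed just before Lemma \ref{Lemma4}, together with the bracket and connection identities of Lemma \ref{Lemma4} itself, in particular $\omega(e_1)=H$, $e_2(\alpha)=h^{-1}V(h)$, and $[e_1,V]=-\alpha H e_1-2\alpha V$, where $h=f-\alpha g$ and $V=T+\alpha e_2$.

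The first and decisive step is to compute the evolution of the adapted frame, i.e. the commutator $[\partial_t,e_1]$ on the total space $\Sigma\times I$, where $\partial_t$ is carried by $F$ to $X$ and $e_1$ is extended so as to remain the unit characteristic tangent of each slice $\Sigma_t$. Because $e_1$ must stay unit and characteristic while $V$ must stay tangent with $\Theta(V)=1$ and $e^1(V)=0$, this commutator is forced to be tangential, and I would determine its coefficients in the basis $\{e_1,V\}$ by differentiating the defining conditions of the frame in $t$ and substituting the connection data $\omega(e_1),\omega(e_2),\omega(T)$ from Lemma \ref{Lemma4}. I expect the outcome $[\partial_t,e_1]=fH\,e_1+(2f-e_1(g))V$, which is exactly what the third identity demands.

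Granting the frame evolution, the two first-order identities are short. For $d\alpha/dt$ I would differentiate the relation characterizing $\alpha$ (the tangency of $V=T+\alpha e_2$, equivalently $\alpha=e^2(V)$) in $t$ and simplify using $e_2(\alpha)=h^{-1}V(h)$, arriving at $d\alpha/dt=V(h)+gV(\alpha)$. The identity for $e_1(\alpha)$ is then purely formal: writing $\frac{d}{dt}e_1(\alpha)=e_1\!\left(\frac{d\alpha}{dt}\right)+[\partial_t,e_1]\alpha$ and inserting both $d\alpha/dt=V(h)+gV(\alpha)$ and the commutator above, the terms $2fV(\alpha)$ and $fHe_1(\alpha)$ are produced precisely by $[\partial_t,e_1]\alpha$, while the two $e_1(g)V(\alpha)$ contributions cancel, leaving $e_1V(h)+ge_1V(\alpha)+2fV(\alpha)+fHe_1(\alpha)$.

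The hard part will be the formula for $dH/dt$. Since $H=\omega(e_1)$, differentiating it in $t$ produces a second-order term $e_1e_1(h)$ and forces the use of the curvature of the connection form: under the torsion-free hypothesis $d\omega=W\,e^1\wedge e^2$ (any $\Theta$-terms dropping out), and it is through this that the Webster scalar curvature $W$, hence the summand $4h\big(e_1(\alpha)+\alpha^2+\tfrac14 H^2+\tfrac12 W\big)$, enters. Concretely I would pull back $de^1=-e^2\wedge\omega$ to $\Sigma\times I$, evaluate on $(\partial_t,e_1)$, and commute $\partial_t$ past the two spatial derivatives using the frame evolution from the first step, the curvature term accounting for the noncommutativity. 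Tracking the second-order term together with all the quadratic-in-$(\alpha,H)$ and curvature contributions, while checking that the variation enters only through $h$ and $g$ in the correct slots, is the delicate bookkeeping that makes this identity the crux of the lemma.
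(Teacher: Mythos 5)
You should note first that the paper does not actually prove this lemma --- it is imported verbatim from \cite[Lemma 6]{CYZ18} --- so there is no internal proof to compare against. That said, your method is exactly the one the paper deploys for the companion Lemma \ref{L-4-5}, and your explicit steps check out. Your frame-evolution formula agrees with the bracket the paper computes there, $[fe_{2}+gT,e_{1}]=fHe_{1}+2fV-e_{1}(g)V$, obtained from Lemma \ref{Lemma4} (the $e_{2}$- and $T$-components combine as $f\omega(e_{2})+g\omega(T)-e_{1}(f)=\alpha(2f-e_{1}(g))$ and $2f-e_{1}(g)$, which is where $h=f-\alpha g$ does its work). Your derivation of the third identity from the second via $\frac{d}{dt}e_{1}(\alpha)=e_{1}\big(\frac{d\alpha}{dt}\big)+[X,e_{1}](\alpha)$, with the two $e_{1}(g)V(\alpha)$ terms cancelling, is correct and is literally the manipulation in the paper's proof of Lemma \ref{L-4-5}; and $\frac{d\alpha}{dt}=X(\alpha)=he_{2}(\alpha)+gV(\alpha)=V(h)+gV(\alpha)$ follows from $e_{2}(\alpha)=h^{-1}V(h)$ in Lemma \ref{Lemma4} just as you say. (One small point: tangentiality of $[\partial_{t},e_{1}]$ to the slices is automatic for the bracket of $\partial_{t}$ with a slice-tangent field on $\Sigma\times I$; the unit and characteristic conditions only pin down the coefficients.)

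Two concrete corrections are needed in the $dH/dt$ step, which, as you say, is the crux. First, your curvature normalization is wrong: in this paper's conventions the torsion-free structure equation is $d\omega=-2W\,e^{1}\wedge e^{2}$, not $W\,e^{1}\wedge e^{2}$. You can calibrate the constant against the paper itself: evaluating $d\omega(e_{1},V)=e_{1}(\omega(V))-V(\omega(e_{1}))-\omega([e_{1},V])$ with $\omega(V)=e_{1}(\alpha)+2\alpha^{2}$, $\omega(e_{1})=H$ and $[e_{1},V]=-\alpha He_{1}-2\alpha V$ must reproduce the Codazzi-like equation $e_{1}e_{1}(\alpha)=-6\alpha e_{1}(\alpha)+V(H)-\alpha H^{2}-4\alpha^{3}-2W\alpha$ quoted in the proof of Proposition \ref{firstvariation2}, and this forces $d\omega(e_{1},V)=-2W\alpha$; with your constant the $W$-terms in the first identity will not close (you would get a defect of $3Wf$). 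Second, the efficient implementation is $\frac{dH}{dt}=X(\omega(e_{1}))=d\omega(X,e_{1})+e_{1}(\omega(X))+\omega([X,e_{1}])$, where $\omega(X)=e_{1}(h)+2\alpha f+ge_{1}(\alpha)$; expanding $e_{1}(\omega(X))$ produces a term $g\,e_{1}e_{1}(\alpha)$ that must be traded for the advection term $gV(H)$ using that same Codazzi-like equation --- a second use of the curvature identity that your sketch omits. Once both are in place the computation closes: under vanishing torsion $d\omega(X,e_{1})=2Wf$ (the $\Theta$-component of $d\omega$ indeed drops out, as you anticipated), and collecting terms with $h=f-\alpha g$ yields exactly the stated $e_{1}e_{1}(h)+2\alpha e_{1}(h)+4h\big(e_{1}(\alpha)+\alpha^{2}+\frac{1}{4}H^{2}+\frac{1}{2}W\big)+gV(H)$.
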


In addition, we compute the following

\begin{lemma}
\label{L-4-5} We have%
\begin{equation*}
\frac{d}{dt}V(\alpha )=V(\frac{d\alpha }{dt})+he_{1}(\alpha )e_{1}(\alpha
)-\alpha e_{1}(h)e_{1}(\alpha )-V(g)V(\alpha ),
\end{equation*}%
and 
\begin{equation*}
\frac{d}{dt}[e_{1}(|H_{cr}|\mathfrak{f})]=e_{1}[\frac{d}{dt}(|H_{cr}|%
\mathfrak{f})]+[fHe_{1}+2fV-e_{1}(g)V](|H_{cr}|\mathfrak{f}),
\end{equation*}
where 
\begin{eqnarray*}
\frac{d}{dt}(|H_{cr}|\mathfrak{f}) &=&H_{cr}\frac{d}{dt}[e_{1}(H)]+e_{1}(H)%
\frac{d}{dt}H_{cr}+\frac{3}{2}\frac{d}{dt}V(H_{cr}) \\
&&+\frac{1}{2}e_{1}(H_{cr})\frac{dH}{dt}+\frac{1}{2}H\frac{d}{dt}%
[e_{1}(H_{cr})] \\
&&-\frac{d\alpha }{dt}HH_{cr}-\alpha \frac{dH}{dt}H_{cr}-\alpha H\frac{%
dH_{cr}}{dt},
\end{eqnarray*}
here 
\begin{eqnarray*}
\frac{d}{dt}e_{1}(H) &=&e_{1}[e_{1}e_{1}(h)+2\alpha e_{1}(h)+4he_{1}(\alpha
)+H^{2}h+4\alpha ^{2}h+2Wh] \\
&&+ge_{1}V(H)+fHe_{1}(H)+2fV(H),
\end{eqnarray*}
\begin{equation*}
\frac{d}{dt}V(H_{cr})=V(\frac{dH_{cr}}{dt})+he_{1}(\alpha
)e_{1}(H_{cr})-\alpha e_{1}(h)e_{1}(H_{cr})-V(g)V(H_{cr}),
\end{equation*}
and 
\begin{eqnarray*}
\frac{d}{dt}H_{cr} &=&e_{1}V(h)+\alpha V(h)+\frac{1}{3}H[e_{1}e_{1}(h)+2%
\alpha e_{1}(h)] \\
&&+\frac{4}{3}H(e_{1}(\alpha )+\alpha ^{2}+\frac{1}{4}H^{2}+\frac{1}{2}W)h \\
&&+(2V(\alpha )+He_{1}(\alpha ))f+(e_{1}V(\alpha )+\alpha V(\alpha )+\frac{1%
}{3}HV(H))g.
\end{eqnarray*}
\end{lemma}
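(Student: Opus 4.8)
The plan is to reduce the whole lemma to two commutator identities for the evolution operator $\frac{d}{dt}$ against the differential operators $e_{1}$ and $V$; once these are in hand, every displayed equation is either a direct application of one commutator or a Leibniz-rule expansion of the expression for $|H_{cr}|\mathfrak{f}$ recorded in Proposition \ref{firstvariation2}. First I would rewrite the variation field in the frame adapted to the computation: since $V=T+\alpha e_{2}$ and $h=f-\alpha g$, one has $X=fe_{2}+gT=he_{2}+gV$. Viewing $\alpha,H,H_{cr}$ as time-dependent functions on $\Sigma$ (equivalently functions on $\Sigma\times I$ via $\hat F(p,t)=F_{t}(p)$), the operators $\frac{d}{dt}=\partial_{t}$ and $e_{1}$ (resp.\ $V$) share no second-order part, so $[\frac{d}{dt},e_{1}]$ and $[\frac{d}{dt},V]$ are again vector fields; by naturality of the Lie bracket under $\hat F$-related fields they equal $[X,e_{1}]$ and $[X,V]$ computed in $M$.

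The heart of the argument is the evaluation of these two brackets using the torsion-free structure equations and Lemma \ref{Lemma4}. From $\omega(e_{1})=H$, $\omega(e_{2})=h^{-1}e_{1}(h)+2\alpha$ and $T=V-\alpha e_{2}$ one gets $[e_{2},e_{1}]=He_{1}+h^{-1}e_{1}(h)e_{2}+2V$, and Lemma \ref{Lemma4} gives $[V,e_{1}]=\alpha He_{1}+2\alpha V$. Expanding $[he_{2}+gV,e_{1}]$ and observing the crucial cancellation of the $e_{2}$-direction leaves
\[
[X,e_{1}]=H(h+g\alpha)e_{1}+2(h+g\alpha)V-e_{1}(g)V=fHe_{1}+2fV-e_{1}(g)V,
\]
using $h+g\alpha=f$. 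A parallel expansion of $[he_{2}+gV,V]$, via $[e_{2},V]=\omega(T)e_{1}+e_{2}(\alpha)e_{2}$ with $\omega(T)=e_{1}(\alpha)-\alpha h^{-1}e_{1}(h)$ and $e_{2}(\alpha)=h^{-1}V(h)$ from Lemma \ref{Lemma4}, again annihilates the $e_{2}$-terms and yields
\[
[X,V]=(he_{1}(\alpha)-\alpha e_{1}(h))e_{1}-V(g)V.
\]
These are precisely the commutator operators in the two main formulas of the lemma: the first formula is literally $\frac{d}{dt}V(\alpha)=V(\frac{d\alpha}{dt})+[\frac{d}{dt},V]\alpha$, and the prefactor $[fHe_{1}+2fV-e_{1}(g)V]$ in the second formula is $[\frac{d}{dt},e_{1}]$.

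It then remains to expand the intermediate quantities. Differentiating the identity $|H_{cr}|\mathfrak{f}=e_{1}(H)H_{cr}+\frac{3}{2}V(H_{cr})+\frac{1}{2}He_{1}(H_{cr})-\alpha HH_{cr}$ of Proposition \ref{firstvariation2} by the Leibniz rule produces exactly the stated expression for $\frac{d}{dt}(|H_{cr}|\mathfrak{f})$, once $\frac{d}{dt}[e_{1}(H)]$, $\frac{d}{dt}[e_{1}(H_{cr})]$ and $\frac{d}{dt}V(H_{cr})$ are abbreviated. Each of these is handled by the same two commutators, e.g.\ $\frac{d}{dt}e_{1}(H)=e_{1}(\frac{dH}{dt})+[\frac{d}{dt},e_{1}]H$ and $\frac{d}{dt}V(H_{cr})=V(\frac{dH_{cr}}{dt})+[\frac{d}{dt},V]H_{cr}$: substituting $\frac{dH}{dt}$ from Lemma \ref{Lemma6}, the $e_{1}(g)V(H)$ term produced by $e_{1}$ hitting the $gV(H)$ summand cancels against the $-e_{1}(g)V$ piece of the commutator, leaving $ge_{1}V(H)+fHe_{1}(H)+2fV(H)$ as claimed. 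Finally $\frac{dH_{cr}}{dt}$ comes from $H_{cr}=e_{1}(\alpha)+\frac{1}{2}\alpha^{2}+\frac{1}{6}H^{2}+\frac{1}{4}W$ (with $W$ constant, so $\frac{dW}{dt}=0$) by differentiating and inserting the three formulas of Lemma \ref{Lemma6} for $\frac{d}{dt}e_{1}(\alpha)$, $\frac{d\alpha}{dt}$ and $\frac{dH}{dt}$; collecting the $f$- and $g$-coefficients gives the last display.

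I expect the only real obstacle to be that first computational step, namely checking that the $e_{2}$-components in $[X,e_{1}]$ and $[X,V]$ cancel. This is exactly where the specific values $\omega(e_{2})=h^{-1}e_{1}(h)+2\alpha$ and $e_{2}(\alpha)=h^{-1}V(h)$ of Lemma \ref{Lemma4} enter, and it is the reason the commutators involve only $e_{1}$ and $V$ and never $e_{2}$; everything downstream is bookkeeping with the Leibniz rule and Lemma \ref{Lemma6}.
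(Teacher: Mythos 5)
Your proposal is correct and follows essentially the same route as the paper: the paper's proof likewise reduces everything to the two brackets $[fe_{2}+gT,e_{1}]=fHe_{1}+2fV-e_{1}(g)V$ and $[fe_{2}+gT,V]=he_{1}(\alpha)e_{1}-\alpha e_{1}(h)e_{1}-V(g)V$, computed from Lemma \ref{Lemma4} and the torsion-free structure equations, and then applies them together with the Leibniz rule, Proposition \ref{firstvariation2}, and Lemma \ref{Lemma6} (including the same cancellation of the $e_{1}(g)V(H)$ term in $\frac{d}{dt}e_{1}(H)$). Your rewriting $X=he_{2}+gV$ is only a cosmetic change of frame that makes the $e_{2}$-cancellations slightly more transparent, not a different argument.
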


Proof. It follows from Lemma \ref{Lemma4} that 
\begin{eqnarray*}
[fe_2+gT,V]&=&f[e_2,V]+g[T,V]-V(f)e_2-V(g)T \\
&=&f[e_2(\alpha)e_2 +e_1(\alpha) e_1 -\alpha h^{-1}e_1(h)e_1] \\
&&+g[T(\alpha)e_2-\alpha e_1(\alpha)e_1+\alpha^2h^{-1}e_1(h)e_1 ] \\
&&-V(f)e_2-V(g)T \\
&=&he_1(\alpha)e_1-\alpha e_1(h)e_1-V(g)V.
\end{eqnarray*}
Then 
\begin{equation*}
\frac{d}{dt}V(\alpha )=V(\frac{d\alpha }{dt})+he_{1}(\alpha )e_{1}(\alpha
)-\alpha e_{1}(h)e_{1}(\alpha )-V(g)V(\alpha ).
\end{equation*}
In a similar manner, 
\begin{eqnarray*}
\frac{d}{dt}V(H_{cr}) &=&V(\frac{d}{dt}H_{cr})+[fe_{2}+gT,V](H_{cr}) \\
&=&V(\frac{dH_{cr}}{dt})+he_{1}(\alpha)e_{1}(H_{cr})-\alpha
e_{1}(h)e_{1}(H_{cr})-V(g)V(H_{cr}).
\end{eqnarray*}

It follows from Lemma \ref{Lemma4} that 
\begin{eqnarray*}
\lbrack fe_{2}+gT,e_{1}]&=&f[e_2,e_1]-e_1(f)e_2+g[T,e_1]-e_1(g)T \\
&=&f[\omega(e_2)e_2+\omega(e_1)e_1+2T]-e_1(f)e_2+g\omega(T)e_2-e_1(g)T  \\
&=&fHe_{1}+2fV-e_{1}(g)V,
\end{eqnarray*}%
then by Lemma \ref{Lemma6} 
\begin{eqnarray*}
\frac{d}{dt}e_{1}(H) &=&e_{1}(\frac{dH}{dt})+[fe_{2}+gT,e_{1}](H) \\
&=&e_{1}[e_{1}e_{1}(h)+2\alpha e_{1}(h)+4h(e_{1}(\alpha )+\alpha ^{2}+\frac{1%
}{4}H^{2}+\frac{1}{2}W)+gV(H)] \\
&&+fHe_{1}(H)+2fV(H)-e_{1}(g)V(H) \\
&=&e_{1}[e_{1}e_{1}(h)+2\alpha e_{1}(h)+4he_{1}(\alpha )+H^{2}h+4\alpha
^{2}h+2Wh] \\
&&+ge_{1}V(H)+fHe_{1}(H)+2fV(H).
\end{eqnarray*}
In a similar manner, we have 
\begin{equation*}
\frac{d}{dt}[e_{1}(|H_{cr}|\mathfrak{f})]=e_{1}[\frac{d}{dt}(|H_{cr}|%
\mathfrak{f})]+[fHe_{1}+2fV-e_{1}(g)V](|H_{cr}|\mathfrak{f}).
\end{equation*}
Recall that 
\begin{equation*}
|H_{cr}|\mathfrak{f}\mathfrak{=}e_{1}(H)H_{cr}+\frac{3}{2}V(H_{cr})+\frac{1}{%
2}He_{1}(H_{cr})-\alpha HH_{cr}.
\end{equation*}%
So we have 
\begin{eqnarray*}
\frac{d}{dt}(|H_{cr}|\mathfrak{f}) &=&H_{cr}\frac{d}{dt}[e_{1}(H)]+e_{1}(H)%
\frac{d}{dt}H_{cr}+\frac{3}{2}\frac{d}{dt}V(H_{cr}) \\
&&+\frac{1}{2}e_{1}(H_{cr})\frac{dH}{dt}+\frac{1}{2}H\frac{d}{dt}%
[e_{1}(H_{cr})] \\
&&-\frac{d\alpha }{dt}HH_{cr}-\alpha \frac{dH}{dt}H_{cr}-\alpha H\frac{%
dH_{cr}}{dt}.
\end{eqnarray*}

Recall that 
\begin{equation*}
H_{cr}=e_{1}(\alpha )+\frac{1}{2}\alpha ^{2}+\frac{1}{6}H^{2}+\frac{1}{4}W.
\end{equation*}
Then by Lemma \ref{Lemma6}, we have 
\begin{eqnarray*}
\frac{d}{dt}H_{cr} &=&e_{1}V(h)+\alpha V(h)+\frac{1}{3}H[e_{1}e_{1}(h)+2%
\alpha e_{1}(h)] \\
&&+\frac{4}{3}H(e_{1}(\alpha )+\alpha ^{2}+\frac{1}{4}H^{2}+\frac{1}{2}W)h \\
&&+(2V(\alpha )+He_{1}(\alpha ))f+(e_{1}V(\alpha )+\alpha V(\alpha )+\frac{1%
}{3}HV(H))g.
\end{eqnarray*}
$\hfill \Box$

\subsection{Second variation of $E_{1}$ for the Clifford torus: proof of
Theorem \protect\ref{T-2nd-var}\label{Sub4-3}}

The Clifford torus of the standard CR 3-sphere $S^{3}$ have been studied in 
\cite[Example 4 in Subsection 4.1]{CYZ18}. The contact form $\hat{\Theta}$
on the standard CR 3-sphere $S^{3}$ reads as $\hat{\Theta}%
=\sum_{i=1}^{2}(x^{i}dy^{i}-y^{i}dx^{i}),$ hence $d\hat{\Theta}%
=2\sum_{i=1}^{2}dx^{i}\wedge dy^{i}$. So the Reeb vector field $%
T=\sum_{i=1}^2 (x^{i}\frac{\partial }{\partial y^{i}}-y^{i}\frac{\partial }{%
\partial x^{i}})$. The standard CR 3-sphere $S^{3}$ has vanishing torsion
and Webster scalar curvature $W=2.$ A (nonsingular) surface $\Sigma \subset
S^{3}$ has $\alpha =0$ if and only if $T\in T\Sigma $. The following
examples have the property that $\alpha =0$: 
\begin{equation*}
\Sigma _{\lbrack \rho _{1}]}:=\{|z^{1}|^{2}=(x^{1})^{2}+(y^{1})^{2}=\rho
_{1}^{2},\quad |z^{2}|^{2}=\rho _{2}^{2}=1-\rho _{1}^{2}\}.
\end{equation*}

We introduce the coordinates $(\varphi_1,\varphi_2)$ for $\Sigma _{\lbrack
\rho _{1}]}$ such that $\Sigma _{\lbrack \rho
_{1}]}=\{(\rho_1e^{i\varphi_1},\rho_2e^{i\varphi_2})\}$. For $\Sigma
_{\lbrack \rho _{1}]}$, we have 
\begin{equation*}
e_{1}=-\frac{\rho _{2}}{\rho _{1}}\frac{\partial }{\partial \varphi _{1}}+%
\frac{\rho _{1}}{\rho _{2}} \frac{\partial }{\partial \varphi _{2}},
\end{equation*}
where $\frac{\partial }{\partial \varphi _{i}}=x^{i}\frac{\partial }{%
\partial y^{i}}-y^{i}\frac{\partial }{\partial x^{i}},$ 
Note also that $V=T=\partial _{\varphi _{1}}+\partial _{\varphi _{2}}$. The
p-mean curvature of $\Sigma _{\lbrack \rho _{1}]}$ is given by 
\begin{equation*}
H=\frac{\rho _{1}^{2}-\rho _{2}^{2}}{\rho _{1}\rho _{2}}=\frac{\rho _{1}}{%
\rho _{2}}-\frac{\rho _{2}}{\rho _{1}}.
\end{equation*}%
When $\rho _{1}=\rho _{2}=\frac{1}{\sqrt{2}}$, it is the Clifford torus $%
\Sigma _{1/\sqrt{2}}$ with 
\begin{equation*}
\alpha =H=0.
\end{equation*}
Besides, for the Clifford torus 
\begin{equation*}
e_{1}=-\frac{\partial }{\partial \varphi _{1}}+\frac{\partial }{\partial
\varphi _{2}}, \quad V=T=\partial _{\varphi_{1}}+\partial _{\varphi _{2}},
\quad [e_{1},V]=0.
\end{equation*}
For the Clifford torus, we compute 
\begin{equation}  \label{HcrClifford}
H_{cr}=e_{1}(\alpha )+\frac{1}{2}\alpha ^{2}+\frac{1}{6}H^{2}+\frac{1}{4}W=%
\frac{1}{2},
\end{equation}
\begin{equation}  \label{HcrfClifford}
|H_{cr}|\mathfrak{f}\mathfrak{=}e_{1}(H)H_{cr}+\frac{3}{2}V(H_{cr})+\frac{1}{%
2}He_{1}(H_{cr})-\alpha HH_{cr}=0.
\end{equation}

We now compute the second variation of $E_1$ for the Clifford torus $\Sigma
_{\lbrack 1/\sqrt{2}]}$, i.e. (\ref{SVF}) of Theorem \ref{T-2nd-var}.

\begin{theorem}
Let $\Sigma $ be the Clifford torus $\Sigma _{\lbrack 1/\sqrt{2}]}$ in the
CR 3-sphere $S^{3}$. Then it holds that%
\begin{equation*}
\frac{d^{2}}{dt^{2}}|_{t=0}E_{1}(\Sigma _{t})=\frac{\sqrt{2}}{4}\int_{\Sigma
_{\lbrack 1/\sqrt{2}%
]}}[e_{1}e_{1}(f)^{2}+3e_{1}T(f)^{2}-7e_{1}(f)^{2}+9fTT(f)+12f^{2}]\Theta
\wedge e^{1}.
\end{equation*}
\end{theorem}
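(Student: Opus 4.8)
The plan is to start from the general second-variation formula (\ref{secondvariation1}) and exploit how drastically it collapses on the Clifford torus. By (\ref{HcrClifford}), (\ref{HcrfClifford}) and the data collected in Subsection \ref{Sub4-3}, on $\Sigma_{[1/\sqrt 2]}$ we have $\alpha = H = 0$, $W = 2$, $H_{cr} = \frac12$, $|H_{cr}|\mathfrak{f} = 0$, $V = T$, $[e_1,V] = 0$, and $h := f - \alpha g = f$. Denote by $Q$ the bracket $\{\cdots\}$ appearing inside the integral of (\ref{secondvariation1}); the first task is to evaluate $\frac{d}{dt}Q|_{t=0}$. Since $\alpha$ and $H$ vanish identically and $H_{cr}$, $|H_{cr}|\mathfrak{f}$ are respectively constant and identically zero on the torus, all their tangential derivatives $e_1(\alpha),V(\alpha),e_1(H),V(H),e_1(H_{cr}),V(H_{cr}),e_1(|H_{cr}|\mathfrak{f})$ vanish at $t=0$. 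Consequently the first and third summands of $Q$ (each carrying a factor $|H_{cr}|\mathfrak{f}$ or $e_1(H_{cr})$, and a factor $\alpha$) contribute nothing, and only the second and fourth summands survive.

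Second, I would substitute the Clifford-torus values into Lemmas \ref{Lemma6} and \ref{L-4-5}. At $t=0$ these collapse to the clean reductions $\frac{d\alpha}{dt} = V(h)$, $\frac{dH}{dt} = e_1e_1(h) + 4h$, and $\frac{dH_{cr}}{dt} = e_1V(h)$, whence
\[
\frac{d}{dt}V(\alpha) = VV(h),\quad \frac{d}{dt}V(H_{cr}) = Ve_1V(h),\quad \frac{d}{dt}e_1(H) = e_1(e_1e_1(h)+4h),
\]
\[
\frac{d}{dt}(|H_{cr}|\mathfrak{f}) = \tfrac12 e_1(e_1e_1(h)+4h) + \tfrac32 Ve_1V(h).
\]
Assembling the surviving contribution of the second summand $\frac12 e_1(|H_{cr}|\mathfrak{f})$ (whose $t$-derivative at $t=0$ is just $\frac12 e_1$ of the last display, since the extra operator terms in Lemma \ref{L-4-5} kill the zero function $|H_{cr}|\mathfrak{f}$ and the vanishing $H$) together with that of the fourth summand $H_{cr}[\frac92 V(\alpha)+3HH_{cr}-\frac16 H^3]$, and using $e_1V = Ve_1$ to rewrite $e_1Ve_1V = e_1^2V^2$, I expect
\[
\frac{d}{dt}Q\Big|_{t=0} = \frac14 e_1^4(f) + \frac74 e_1^2(f) + \frac34 e_1^2V^2(f) + \frac94 V^2(f) + 3f.
\]

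Third, I would multiply by $|H_{cr}|^{-1/2}h = \sqrt2\, f$ and integrate; clearing the common $\frac14$ produces exactly the prefactor $\frac{\sqrt2}{4}$ of (\ref{SVF}), leaving an integrand that agrees with the target after integration by parts. The final step is integration by parts against $\Theta\wedge e^1$. The structural point is that in the coordinates $(\varphi_1,\varphi_2)$ both $e_1 = -\partial_{\varphi_1}+\partial_{\varphi_2}$ and $T = \partial_{\varphi_1}+\partial_{\varphi_2}$ have constant coefficients and $\Theta\wedge e^1$ is a constant multiple of $d\varphi_1\wedge d\varphi_2$; as $f$ is supported away from the singular set, $e_1$ and $T$ are skew-adjoint, so $e_1^2$, $T^2$ and (via $[e_1,T]=0$) $e_1T$ are self-adjoint. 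Hence $\int e_1^4(f)f = \int (e_1^2 f)^2 = \int e_1e_1(f)^2$, $\int e_1^2(f)f = -\int e_1(f)^2$, $\int e_1^2T^2(f)f = \int (e_1Tf)^2 = \int e_1T(f)^2$, while $T^2(f)f = fTT(f)$ and $f^2$ are left untouched. Writing $V=T$ throughout then yields (\ref{SVF}) precisely.

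The main obstacle is bookkeeping rather than anything conceptual: for each of the many terms in Lemmas \ref{Lemma6} and \ref{L-4-5} one must verify that it indeed vanishes at $t=0$ on the torus, and then track the two fourth-order operators $e_1^4$ and $e_1^2V^2$ correctly through the iterated $e_1$-derivative coming from the second summand of $Q$. A secondary item to check explicitly is the skew-adjointness of $e_1$ and $T$ — equivalently, that these divergence-free fields preserve the measure $\Theta\wedge e^1$ — which is what legitimizes every integration by parts in the last step.
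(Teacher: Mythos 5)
Your proposal is correct and follows essentially the same route as the paper's own proof: it starts from (\ref{secondvariation1}), uses $\alpha=H=0$, $H_{cr}=\frac12$, $|H_{cr}|\mathfrak{f}=0$ to kill the first and third summands, applies Lemmas \ref{Lemma6} and \ref{L-4-5} to obtain exactly the paper's intermediate expression (\ref{SVF1}) (with $e_1Ve_1V=e_1^2V^2$ via $[e_1,V]=0$), and finishes by the same integrations by parts. The only cosmetic difference is that you justify the integration by parts directly from the constant-coefficient fields and the constant measure $\Theta\wedge e^1=\frac12\,d\varphi_1\wedge d\varphi_2$, whereas the paper cites \cite[Lemma 7]{CYZ18} specialized to $\alpha=0$; both are valid.
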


Proof. By (\ref{secondvariation1}) (\ref{HcrClifford}) and (\ref%
{HcrfClifford}), we have the second variation of $E_1$ at $\Sigma
_{0}=\Sigma _{\lbrack 1/\sqrt{2}]}:$ the Clifford torus 
\begin{eqnarray}
&&\frac{d^{2}}{dt^{2}}|_{t=0}E_{1}(\Sigma _{t}) =\int_{\Sigma _{\lbrack 1/%
\sqrt{2}]}}|H_{cr}|^{-\frac{1}{2}}\frac{d}{dt}\{-\frac{1}{4}|H_{cr}|%
\mathfrak{f}\frac{1}{H_{cr}}e_{1}(H_{cr})+\frac{1}{2}e_{1}(|H_{cr}|\mathfrak{%
f})  \notag \\
&&+\frac{3}{2}|H_{cr}|\mathfrak{f}\alpha +H_{cr}[\frac{9}{2}V(\alpha
)+3HH_{cr}-\frac{1}{6}H^{3}]\}h\Theta \wedge e^{1}  \notag \\
&&=\sqrt{2}\int_{\Sigma _{\lbrack 1/\sqrt{2}]}}[\frac{1}{2}\frac{d}{dt}%
e_{1}(|H_{cr}|\mathfrak{f})+\frac{9}{4}\frac{d}{dt}V(\alpha )+\frac{3}{4}%
\frac{dH}{dt}]h\Theta \wedge e^{1}.  \notag
\end{eqnarray}
It follows from Lemma \ref{Lemma6} that 
\begin{equation*}
\frac{d}{dt}|_{t=0}H=e_1e_1(h)+4h.
\end{equation*}
If follows from Lemma \ref{Lemma6} and Lemma \ref{L-4-5} that 
\begin{equation*}
\frac{d}{dt}|_{t=0}V(\alpha)=V\frac{d\alpha}{dt}=VV(h).
\end{equation*}
Using Lemma \ref{L-4-5}, we compute 
\begin{eqnarray*}
\frac{d}{dt}|_{t=0}[e_{1}(|H_{cr}|\mathfrak{f})] &=&e_{1}[\frac{d}{dt}%
(|H_{cr}|\mathfrak{f})] \\
&=&e_1\{ H_{cr}\frac{d}{dt}[e_{1}(H)]+\frac{3}{2}\frac{d}{dt}V(H_{cr})\} \\
&=&e_1\{\frac{1}{2}[e_1e_1e_1(h)+4e_1(h)]+\frac{3}{2}V[\frac{d}{dt}H_{cr}]\}
\\
&=&\frac{1}{2}e_{1}e_{1}e_{1}e_{1}(h)+2e_{1}e_{1}(h)+\frac{3}{2}%
e_{1}Ve_{1}V(h).
\end{eqnarray*}
So we have 
\begin{eqnarray}
\frac{d^{2}}{dt^{2}}|_{t=0}E_{1}(\Sigma _{t}) &&=\sqrt{2}\int_{\Sigma
_{\lbrack 1/\sqrt{2}]}}[\frac{1}{2}\frac{d}{dt}e_{1}(|H_{cr}|\mathfrak{f})+%
\frac{9}{4}\frac{d}{dt}V(\alpha )+\frac{3}{4}\frac{dH}{dt}]h\Theta \wedge
e^{1}  \notag \\
&&=\frac{\sqrt{2}}{4}\int_{\Sigma _{\lbrack 1/\sqrt{2}%
]}}[e_{1}e_{1}e_{1}e_{1}(h)+3e_{1}Ve_{1}V(h)+7e_{1}e_{1}(h)+9VV(h)+12h]h%
\Theta \wedge e^{1}.  \label{SVF1}
\end{eqnarray}

Suppose either $f_{1}$ or $f_{2}$ has compact support in the nonsingular
domain of $\Sigma .$ Then we have (see \cite[Lemma 7]{CYZ18}) 
\begin{equation}
\int_{\Sigma }f_{1}e_{1}(f_{2})\theta \wedge e^{1}=\int_{\Sigma
}-[e_{1}(f_{1})+2\alpha f_{1}]f_{2}\Theta \wedge e^{1},  \label{e-23}
\end{equation}%
\begin{equation}
\int_{\Sigma }f_{1}V(f_{2})\theta \wedge e^{1}=-\int_{\Sigma
}[V(f_{1})-\alpha Hf_{1}]f_{2}\Theta \wedge e^{1}.  \label{e-24}
\end{equation}
In particular, on the Clifford torus $\Sigma _{\lbrack 1/\sqrt{2}]}$, $%
\alpha =0$ so (\ref{e-23}) and (\ref{e-24}) are reduced to%
\begin{equation*}
\int_{\Sigma _{\lbrack 1/\sqrt{2}]}}f_{1}e_{1}(f_{2})\Theta \wedge
e^{1}=-\int_{\Sigma _{\lbrack 1/\sqrt{2}]}}e_{1}(f_{1})f_{2}\Theta \wedge
e^{1},
\end{equation*}%
\begin{equation*}
\int_{\Sigma _{\lbrack 1/\sqrt{2}]}}f_{1}V(f_{2})\Theta \wedge
e^{1}=-\int_{\Sigma _{\lbrack 1/\sqrt{2}]}}V(f_{1})f_{2}\Theta \wedge e^{1}
\end{equation*}%
\noindent respectively. Note also that on the Clifford torus $h=f-\alpha
g=f, [e_1,V]=0$, and $V=T$. Then for $\Sigma _{0}=\Sigma _{\lbrack 1/\sqrt{2}%
]}:$ the Clifford torus, by applying the above integration by parts
formulas, we reduce (\ref{SVF1}) to%
\begin{equation*}
\frac{d^{2}}{dt^{2}}|_{t=0}E_{1}(\Sigma _{t})=\frac{\sqrt{2}}{4}\int_{\Sigma
_{\lbrack 1/\sqrt{2}%
]}}[e_{1}e_{1}(f)^{2}+3e_{1}T(f)^{2}-7e_{1}(f)^{2}+9fTT(f)+12f^{2}]\Theta
\wedge e^{1}.
\end{equation*}

\noindent We have obtained (\ref{SVF}), completing the proof of Theorem \ref%
{T-2nd-var}. $\hfill \Box$

\bigskip

\end{document}